\newcommand{\authorfootnotes}{\renewcommand\thefootnote{\@fnsymbol\c@footnote}}%
\tikzstyle{vertex}=[circle, draw, inner sep=0pt, minimum size=6pt]
\newtheorem{thm}{Theorem}[section]
\newtheorem{lem}[thm]{Lemma}
\newtheorem{cor}[thm]{Corollary}
\newtheorem{prop}[thm]{Proposition}
\newtheorem{prob}{Problem}
\newtheorem{conj}[thm]{Conjecture}
\newtheorem{obs}[thm]{Observation}
\newcommand{\D}{{\rm DOM}}
\newcommand{\dom}{{\rm dom}}
\newcommand{\Dt}{{\rm DOM}_t}
\newcommand{\dt}{{\rm dom}_t}
\newcommand{\gt}{\gamma_t}
\newcommand{\pn}{{\rm pn}}
\begin{document}

\title{Orientable total domination in graphs}

\author{
Sarah E. Anderson$^{a}$
\and
Tanja Dravec$^{b,c}$
\and
Daniel Johnston$^{d}$
\and
Kirsti Kuenzel$^{d}$\\
}

\date{\today}

\maketitle

\begin{center}
$^a$ Department of Mathematics, University of St. Thomas, St. Paul, MN, USA\\
$^b$ Faculty of Natural Sciences and Mathematics, University of Maribor, Slovenia\\
$^c$ Institute of Mathematics, Physics and Mechanics, Ljubljana, Slovenia\\
$^d$ Department of Mathematics, Trinity College, Hartford, CT, USA\\
\end{center}

\begin{abstract} 
Given a directed graph $D$, a set $S \subseteq V(D)$ is a total dominating set of $D$ if each vertex in $D$ has an in-neighbor in $S$. The total domination number of $D$, denoted $\gt(D)$, is the minimum cardinality among all total dominating sets of $D$. Given an undirected graph $G$, we study the maximum and minimum total domination numbers among all orientations of $G$. That is, we study the upper (lower) orientable domination number of $G$, $\Dt(G)$ (or $\dt(G)$), which is the largest (or smallest) total domination number over all orientations of $G$. We characterize those graphs with $\Dt(G) = \dt(G)$ when the girth is at least $7$ as well as those graphs with $\dt(G) = |V(G)|-1$. We also consider how these parameters are effected by removing a vertex from $G$, give exact values of $\Dt(K_{m,n})$ and $\dt(K_{m,n})$ and bound these parameters when $G$ is a grid graph.
\end{abstract}

\noindent
{\bf Keywords:} orientation, total domination, orientable total domination number \\

\noindent
{\bf AMS Subj.\ Class.\ (2020)}: 05C20, 05C69, 05C76.

\maketitle

\section{Introduction}
Given an undirected graph $G$, an orientation of $G$ is any digraph where each edge in $G$ is directed in one of two possible ways. If $D$ is a directed graph, a set $S\subseteq V(D)$ is a dominating set (or total dominating set) of $D$ if each vertex of $V(D)-S$ (or $V(D)$) has an in-neighbor in $S$. The minimum cardinality of a dominating set (or total dominating set) of $D$ is denoted $\gamma(D)$ (or $\gt(D)$). Of particular interest are two  recent papers by \cite{AB-23, CH-12} in which they studied the \emph{orientable domination number} of an undirected graph $G$  defined to be $\D(G) = \max\{\gamma(D): D \text{ is an orientation of $G$}\}$ (although this is referred to as \emph{directed domination} in \cite{CH-12}). In both papers,  it was noted that replacing $\max$ with $\min$ in this definition is uninteresting as it would describe the usual domination number of $G$. In this paper, we consider the total domination version of this parameter. That is, given an undirected graph $G$,  the {\em upper orientable total domination number} is defined to be 
$$\Dt(G) = \max\{\gamma_t(D): \, D\textrm{ is a valid orientation of }G\},$$ and the {\em lower orientable total domination number} is defined to be 
$$\dt(G) = \min\{\gamma_t(D): \, D\textrm{ is a valid orientation of }G\}.$$ Notice that given an orientation $D$ of $G$, $\gt(D)$ may not exist unless every vertex in $D$ has at least one edge directed towards it. For this reason, the lower orientable total domination number is not always $\gt(G)$. Furthermore, determining the total domination number of directed graphs is quite difficult (see~\cite{AJV-2007, Hao-2017}).

$\Dt(G)$ and $\dt(G)$ were first defined in \cite{HLY-99, Lisa} at the end of the twentieth century where they were referred to as open orientable domination. We choose to refer to these parameters in terms of total domination as their definitions are the direct translation of total dominating sets in directed graphs. In~\cite{HLY-99, Lisa} it was proved that $\dt(G)$ and $\Dt(G)$ exist if and only if every component of $G$ is not a tree. It was also shown that $\Dt(G)=|V(G)|=\dt(G)$ if and only if $G$ is a cycle. Moreover in these two papers the lower and upper orientable total domination number was studied for complete graphs, although  the exact value for $\Dt(K_n)$ is still not known. The upper and lower orientable total domination number was also studied  for complete multipartite graphs~\cite{Lai-2001}, where the exact values were obtained for $\dt(K_{n_1,\ldots ,n_k})$ and $\Dt(K_{n_1,\ldots ,n_k})$ when $k \geq 3$. However,  their results do not hold for $k=2$. 

In this paper, we first study general properties of $\Dt(G)$ and $\dt(G)$ such as when these two graph parameters are equal, and how $\Dt(G)$ and $\dt(G)$ are effected by local changes to a graph. We then 
provide bounds on $\Dt(K_n)$ and give exact values of $\Dt(K_{m,n})$ and $\dt(K_{m,n})$. We also provide bounds for both parameters with respect to grid graphs. Finally, we characterize all graphs where $\dt(G) = |V(G)|-1$. 

The remainder of this paper is organized as follows. In Section~\ref{sec:def}, we provide useful definitions and previous results that are used throughput the paper. In Section~\ref{sec:gen}, we characterize the graphs with large enough girth where $\Dt(G) = \dt(G)$. We also consider the effect on $\Dt(G)$ and $\dt(G)$ upon removal of a vertex from $G$. Lastly, we compare $\D(G)$ and $\Dt(G)$. In Section~\ref{sec:complete}, we bound $\Dt(K_n)$, and we give exact values of both $\Dt(K_{m, n})$ and $\dt(K_{m,n})$. We consider $\Dt(P_m\Box P_n)$ and $\dt(P_m\Box P_n)$ in Section~\ref{sec:prod}. Lastly, in Section~\ref{sec:extreme}, we provide a characterization of all graphs $G$ where $\dt(G) = |V(G)|-1$.

\subsection{Notation and preliminaries}\label{sec:def}
Let $D=(V(D),A(D))$ be a digraph. A vertex $u$ is an \emph{in-neighbor} of $v$ if $(u,v) \in A(D)$ and an \emph{out-neighbor} of $v$ if $(v,u) \in A(D)$. The \emph{open out-neighborhood} of $v$ is the set of out-neighbors of $v$ and is denoted by $N^+_D(v)$. The \emph{closed out-neighborhood} of $v$ is the set $N^+_D[v]$ defined by $N^+_D[v]=N^+_D(v)\cup \{v\}$.  In a similar manner one defines the \emph{open in-neighborhood} $N^-_D(v)$ of $v$ and the \emph{closed in-neighborhood} $N^-_D[v]$ of $v$. The \emph{in-degree} of $v$ is the number $|N^-_D(v)|$ and the \emph{out-degree} of $v$ is $|N^+_D(v)|$. The subgraph of $D$ induced by a set $S \subseteq V(D)$ is denoted by $D[S]$. For $k \ge 1$ an integer, we let $[k]$ denote the set $\{1,\ldots,k\}$.
If the digraph $D$ is clear from the context, then we may omit the subscript $D$ from the above notations.

A vertex $x \in V(D)$ \emph{ dominates} a vertex $y$ if $y\in N_D^+(x)$, and we then also say that $y$ is \emph{ dominated} by $x$. If $S \subseteq V(D)$, then $y \in V(D)$ is \emph{ dominated} by $S$ if there exists $x\in S$ that  dominates $y$. A set $S$ is a \emph{total dominating set} (or shortly, a \emph{TD}-\emph{set}) of $D$ if every vertex in $D$ is dominated by $S$. Note that a digraph $D$ has a total dominating set if and only if $|N_D^-(v)| \geq 1$ for any $v \in V(D)$.   The minimum cardinality of a TD-set of $D$ is the \emph{total domination number} of $D$, denoted $\gamma_t(D)$. A total dominating set $S$ of $D$ with $|S|=\gt(D)$ will be called a $\gt$-set of $D$.  

For a set $S \subseteq V(D)$ and a vertex $v \in S$, the \emph{open $S$-private neighborhood} of $v$, denoted by $\pn(v,S)$, is the set of vertices that are in the open out-neighborhood of $v$ but not in the open out-neighborhood of the set $S \setminus \{v\}$. Equivalently, $\pn(v,S) = \{w \in V(D) \, \colon N^-(w) \cap S = \{v\}\}$.  If $S$ is a $\gt$-set of a digraph $D$, then $\pn(v,S) \neq \emptyset$ holds for any $v \in S$.

Let $G$ be an undirected graph. An {\em orientation of $G$} is a digraph in which every edge from $G$ is directed in one of the two possible directions. Formally, an orientation of $G$ is defined by a mapping $f:E(G)\to V(G)\times V(G)$, such that if $uv\in E(G)$, then $f(uv)\in \{(u,v),(v,u)\}$. We denote this orientation of $G$ by $G_f$, while we refer to $f$ as the {\em orienting mapping}. In this paper, we use the same definition as that used in \cite{AB-23} where $\D(G) = \max\{\gamma(G_f):\, f\textrm{ is an orienting mapping of }G\}$. If $G$ is a graph, then each edge can be oriented in two different directions. Thus, $G$ is the underlying graph of many different digraphs, where some of them can have large but others small total domination numbers. Moreover, the total domination number of a digraph can be computed only if the orientation of the digraph is valid, i.e.\ when each vertex of the digraph has in-degree at least $1$. Thus we define the {\em upper orientable total domination number} as
$$\Dt(G) = \max\{\gamma_t(G_f): \, f\textrm{ is a valid orienting mapping of }G\},$$ and the {\em lower orientable total domination number} as
$$\dt(G) = \min\{\gamma_t(G_f): \, f\textrm{ is a valid orienting mapping of }G\}.$$
Both invariants were first defined in~\cite{HLY-99, Lisa}, where it was proved that $\Dt(G)$ and $\dt(G)$ exists if and only if every  component of $G$ is not a tree. Clearly, if $T$ is a tree, then in any orientation $f$ of $T$, $T_f$ contains a vertex $v$ with $|N^-_{T_f}(v)|=0$. Thus, we will only consider graphs in which every  component contains a cycle.  Moreover, since the total domination number of a digraph $D$ is the sum of the total domination numbers of the components in $D$, we only consider connected graphs that are not a tree. We let $\mathcal{C}$ represent the class of all connected graphs that contain a cycle.

As stated earlier,  $\dom(G)$ is not really interesting, as one can easily deduce that $\dom(G)=\gamma(G)$ holds for any graph $G$. Indeed, if $S$ is a minimum dominating set of $G$, then we can orient the edges of $G$ such that for every edge $e=xy$ with one end-vertex $x \in S$ and the other end-vertex $y \notin S$, $(x,y)$ is the edge of the corresponding digraph $D$. Thus, $S$ is a dominating set of $D$. This is not the case when studying orientable total domination of a graph $G$, as any vertex in a valid orientation must have in-degree at least $1$. Thus, $\gt(G) \leq \dt(G)$ holds for any graph $G \in \mathcal{C}.$ Moreover the difference between $\dt(G)$ and $\gt(G)$ can be arbitrarly large. For example, $\gt(C_n)\leq \lfloor \frac{n}{2} \rfloor +2$ but $\dt(C_n)=n$~\cite{Lisa}.

We use similar terminology as presented above for an undirected graph $G=(V(G),E(G))$, with $V(G)$ being the vertex set, $E(G)$ the edge set of $G$ and $n(G)=|V(G)|$ the order of $G$. The \emph{open neighborhood} of a vertex $v$ in $G$ is $N_G(v) = \{u \in V(G) \, \colon \, uv \in E(G)\}$ and the \emph{closed neighborhood of $v$} is $N_G[v] = \{v\} \cup N_G(v)$. We denote the degree of $v$ in $G$ by $\deg_G(v)$, and so $\deg_G(v) = |N_G(v)|$. For a set $X \subseteq V(G)$ we denote by $N_G[X]=\cup_{x \in X} N_G[x]$ the closed neighborhood of $X$. For two vertices $x,y \in V(G)$ the \emph{distance} between $x$ and $y$ in $G$, $d_G(x,y)$, is the length of a shortest $x,y$-path in $G$. The subgraph of $G$ induced by a set $D \subseteq V(G)$ is denoted by $G[D]$. Moreover, a vertex of degree $1$ is called a {\em leaf} and the set of all leaves of $G$ is denoted by $L(G)$. The {\em girth} of $G$, $g(G),$ is the cardinality of a smallest cycle in $G$. The  {\em Cartesian product} $G\Box H$ of graphs $G$ and $H$ is defined as the graph with $V(G\Box H)=V(G)\times V(H)$, and $(g,h)(g',h')\in E(G\Box H)$ if either ($g=g'$ and $hh\in E(H)$) or ($gg'\in E(G)$ and $h=h'$). A {\em grid} is the Cartesian product of two paths and a {\em ladder} is the Cartesian product of a path and $K_2$.

\section{General bounds}\label{sec:gen}

We start the section with some general observations that will be used several times in the paper. Since each vertex $v$ of a total dominating set $S$ in a digraph $D$ must be dominated by $S$ (i.e.\ it has an in-neighbor in $S$), we have the following.

\begin{obs}\cite{HLY-99,Lisa}
\label{tdsetcycle}
Let $G$ be a graph and $G_f$ be an orientation of $G$. If $S$ is a TD-set of $G_f$, then every component of $G[S]$ contains at least one cycle.
\end{obs}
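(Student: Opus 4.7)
The plan is to show that inside any component $C$ of $G[S]$, the induced subdigraph of $G_f$ on $V(C)$ has minimum in-degree at least $1$, which then forces the existence of a directed cycle and hence an undirected cycle in $C$.

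First I would fix a component $C$ of $G[S]$ and consider an arbitrary vertex $v\in V(C)\subseteq S$. Because $S$ is a TD-set of $G_f$, there is some $u\in S$ with $(u,v)\in A(G_f)$. The key observation is that $uv$ is an edge of the underlying graph $G$ with both endpoints in $S$, so $uv\in E(G[S])$. Since $v$ lies in the component $C$, the neighbor $u$ must lie in $C$ as well. Therefore every vertex of $V(C)$ has an in-neighbor (in $G_f$) belonging to $V(C)$, i.e.\ in the induced subdigraph $G_f[V(C)]$ every vertex has in-degree at least $1$.

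Next I would invoke the standard fact that a finite digraph in which every vertex has in-degree at least $1$ contains a directed cycle: starting from any vertex, repeatedly pass to an in-neighbor; since $V(C)$ is finite, the sequence of vertices must eventually repeat, and the portion between two occurrences of the first repeated vertex yields a directed cycle in $G_f[V(C)]$. Forgetting directions, this directed cycle is a cycle of $G[V(C)]=C$, which is what we wanted.

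I do not expect a genuine obstacle here; the only point that requires care is the first step, namely that the in-neighbor guaranteed by the TD-condition cannot escape the component $C$. This is immediate because $G[S]$ is defined as an induced subgraph of the undirected $G$, and any directed edge of $G_f$ with both ends in $S$ is an edge of $G[S]$, forcing the in-neighbor into the same component as $v$.
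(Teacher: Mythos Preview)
Your argument is correct and matches the paper's reasoning: the paper merely remarks, just before stating the observation, that every vertex of $S$ has an in-neighbor in $S$, and then cites the observation without further proof. Your write-up simply fills in the details of this same idea.
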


The above observation immediately gives us a trivial lower bound for $\dt(G)$ and $\Dt(G)$, as stated in the following observation. Note that one can achieve the lower bound for $\Dt(G)$ by taking a longest induced cycle in $G$, orienting the edges of said cycle in an oriented cycle, and orienting all remaining edges in $G$ away from the cycle.

\begin{obs} 
\label{DOMgirth}
For any $G \in \mathcal{C}$, $\dt(G) \geq g(G)$ and  $\Dt(G) \ge r$ where $r$ is the length of the longest induced cycle in $G$. 
\end{obs}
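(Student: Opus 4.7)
The plan is to handle the two inequalities separately. For $\dt(G) \geq g(G)$, I would fix an arbitrary valid orientation $G_f$ and any TD-set $S$ of $G_f$. By Observation~\ref{tdsetcycle}, some component of $G[S]$ contains a cycle $C'$, which is also a cycle of $G$, so $|V(C')| \geq g(G)$; since $V(C') \subseteq S$, we obtain $|S| \geq g(G)$. Taking the minimum of $\gt(G_f)$ over all valid orientations then yields $\dt(G) \geq g(G)$.

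For $\Dt(G) \geq r$, I would construct the orientation suggested right before the observation. Let $C$ be an induced cycle of $G$ of length $r$. Orient $E(C)$ so that $C$ becomes a directed cycle, and orient every edge with exactly one endpoint in $V(C)$ so that it points out of $V(C)$. To make the orientation valid on the remaining vertices, for each component $K$ of $G - V(C)$ pick a vertex $v_K \in V(K)$ adjacent to $V(C)$, choose a spanning tree $T_K$ of $K$ rooted at $v_K$, orient each edge of $T_K$ from parent to child, and orient the remaining edges of $K$ arbitrarily. Call the resulting orientation $G_f$.

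I would then verify two things about $G_f$. First, it is a valid orientation: every vertex of $V(C)$ has in-degree $1$ from the directed cycle, each $v_K$ receives an arc from $V(C)$, and every other vertex of $V(G)\setminus V(C)$ receives an arc from its parent in some $T_K$. Second, any TD-set $S$ of $G_f$ must contain $V(C)$: because $C$ is induced and every edge leaving $V(C)$ was oriented outward, the unique in-neighbor of each $v\in V(C)$ in $G_f$ is its predecessor in the directed cycle, so this predecessor must lie in $S$, forcing $V(C)\subseteq S$ and hence $|S|\geq r$. Thus $\gt(G_f)\geq r$, giving $\Dt(G)\geq r$.

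The only non-routine step is the validity check, and it is there that the hypothesis $G\in\mathcal{C}$ is used: connectedness guarantees that each component of $G - V(C)$ contains a vertex adjacent to $V(C)$, which is precisely what lets the spanning-tree construction supply in-degree to every vertex outside $V(C)$.
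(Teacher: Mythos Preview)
Your proof is correct and follows the same approach the paper sketches in the sentence preceding the observation: the girth bound is immediate from Observation~\ref{tdsetcycle}, and for the $\Dt$ bound you orient a longest induced cycle as a directed cycle and push the remaining edges outward. Your spanning-tree construction makes precise the paper's informal phrase ``orienting all remaining edges in $G$ away from the cycle,'' and your explicit validity check fills in a detail the paper leaves implicit.
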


We already know that a connected graph has a valid orientation only when it contains a cycle.  Next, we obtain exact values of $\Dt(G)$ and $\dt(G)$ for any unicyclic graph $G$. Recall that for any graph $G$, $L(G)$ represents the set of leaves in $G$.

\begin{prop}\label{unicyclic}
If $G$ is a connected unicyclic graph, then $\Dt(G)=\dt(G)=|V(G)|-|L(G)|$. Moreover $V(G)-L(G)$ is a total dominating set of $G_f$, for any valid orienting mapping $f$ of $G$.
\end{prop}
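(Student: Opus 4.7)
The plan is to exploit the structural rigidity of a valid orientation of a unicyclic graph: every vertex must end up with in-degree exactly $1$, and this forces both the value of $\gt(G_f)$ and the identity of the unique minimum TD-set in any valid orientation.

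First I would observe the edge count. Since $G$ is a connected unicyclic graph on $n=|V(G)|$ vertices, $|E(G)| = n$. For any valid orienting mapping $f$, every vertex of $G_f$ has in-degree at least $1$, and
\[
\sum_{v \in V(G)} |N^-_{G_f}(v)| = |E(G)| = n,
\]
so each vertex has in-degree \emph{exactly} $1$. Consequently, if $\deg_G(v) = d$, then $v$ has out-degree $d-1$ in $G_f$; in particular, $v$ has out-degree $0$ if and only if $v \in L(G)$.

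Next I would use this to pin down every TD-set. Since each $v \in V(G)$ has a unique in-neighbor in $G_f$, that in-neighbor must lie in every TD-set of $G_f$. The collection of these forced vertices is precisely the set of vertices of $G_f$ with out-degree at least $1$, which by the previous paragraph equals $V(G) \setminus L(G)$. Hence every TD-set of $G_f$ contains $V(G) \setminus L(G)$, giving the lower bound $\gt(G_f) \ge |V(G)| - |L(G)|$.

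For the matching upper bound, I would check directly that $V(G)\setminus L(G)$ is a TD-set of $G_f$: for each $v$, the unique in-neighbor of $v$ has out-degree at least $1$ (the edge to $v$) and therefore cannot be a leaf, so it lies in $V(G) \setminus L(G)$. This simultaneously establishes $\gt(G_f) = |V(G)| - |L(G)|$ for \emph{every} valid orientation $f$, so $\dt(G) = \Dt(G) = |V(G)|-|L(G)|$, and proves the additional ``moreover'' claim. The only mild caveat is the existence of at least one valid orientation, which is immediate: orient the unique cycle as a directed cycle and orient each remaining edge away from the cycle along the rooted tree structure. There is no real obstacle; the whole argument is driven by the identity $|E(G)|=|V(G)|$ forcing in-degree $1$ at every vertex.
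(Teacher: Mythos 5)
Your proof is correct. It rests on the same underlying structural fact as the paper's proof --- that a valid orientation of a unicyclic graph is completely rigid --- but you reach it by a different and, frankly, cleaner route. The paper argues geometrically: validity forces the unique cycle to be oriented cyclically and every tree edge to point away from the cycle, so there are only two valid orientations, and it then simply \emph{asserts} that $\gt(G_f)=|V(G)|-|L(G)|$ in both. You instead use the identity $|E(G)|=|V(G)|$ together with the degree-sum formula to conclude that every vertex has in-degree exactly $1$, and then extract both bounds from that single fact: the unique in-neighbor of each vertex is forced into every TD-set (and the forced vertices are exactly the non-leaves, giving the lower bound), while the set of non-leaves is itself a TD-set (giving the upper bound and the ``moreover'' clause). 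This counting argument buys you a fully explicit justification of the value $|V(G)|-|L(G)|$, which the paper's proof leaves to the reader, and it identifies $V(G)\setminus L(G)$ as the \emph{unique} minimum TD-set of every valid orientation --- slightly more than the statement asks for. The only thing to make sure you say explicitly (you do, at the end) is that at least one valid orientation exists, so that $\dt$ and $\Dt$ are defined at all.
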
  
\begin{proof}
Let $f$ be an arbitrary valid orienting mapping of $G$. Since all vertices of $G_f$ have in-degree at least 1, edges of the cycle are oriented cyclically. Moreover, for any other edge $uv \in E(G)$ it holds that $(u,v) \in A(G_f)$ if the distance from $u$ to any vertex on the cycle is less than the distance from $v$ to any vertex on the cycle by Observation~\ref{tdsetcycle}.
Thus, we have just two possible orientations of $G$, depending on which direction on the cycle we choose. In both cases $\gt(G_f)=|V(G)|-|L(G)|$ and hence $\dt(G)=\Dt(G) =|V(G)|-|L(G)|$.
\end{proof}

If the girth is large enough we can characterize the graphs achieving the bound from Observation~\ref{DOMgirth}. Indirectly, we have also given a partial characterization of the graphs for which $\Dt(G) = \dt(G)$. 

\begin{thm}
\label{DOMunicyclic}
Let $G$ be a graph with $g(G) \ge 7$. Then the following statements are equivalent.
\begin{itemize}
\item(i) $\Dt(G)=g(G)$ or $\dt(G)=g(G)$;
\item(ii) $\Dt(G) =\dt(G) =  g(G)$;
\item(iii) $G$ is unicyclic and $G$ is obtained from a cycle by appending any number of leaves to each vertex of the cycle.
\end{itemize} 
\end{thm}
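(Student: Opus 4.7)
The plan is to close the loop (iii) $\Rightarrow$ (ii) $\Rightarrow$ (i) $\Rightarrow$ (iii). The first implication is immediate from Proposition~\ref{unicyclic}: if $G$ arises from a $g(G)$-cycle $C$ by appending leaves to cycle vertices, then $G$ is connected and unicyclic with $|V(G)| - |L(G)| = |V(C)| = g(G)$, so $\Dt(G) = \dt(G) = g(G)$. The implication (ii) $\Rightarrow$ (i) is trivial.

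The real work is in (i) $\Rightarrow$ (iii). Writing $g = g(G)$, I would first reduce the two cases of (i) to a single setup. If $\Dt(G) = g$, then Observation~\ref{DOMgirth} forces $g \leq \dt(G) \leq \Dt(G) = g$, so $\dt(G) = g$ in either case. Hence some valid orientation $f$ of $G$ admits a $\gt$-set $S$ with $|S| = g$. By Observation~\ref{tdsetcycle}, every component of $G[S]$ contains a cycle, and every such cycle has length at least $g$. Since $|S| = g$, $G[S]$ is connected on exactly $g$ vertices and contains a cycle of length at least $g$. A chord would produce a shorter cycle contradicting girth $g$, so $G[S]$ is a single $g$-cycle $C$. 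In the orientation $G_f$ each vertex of $C$ must receive in-degree at least one from within $G[S]=C$; since the in-degrees on $C$ are nonnegative integers summing to $g$, each must equal exactly $1$, which forces a cyclic orientation of $C$.

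It remains to show that $V(G) \setminus V(C)$ consists of leaves attached to $C$. Since $S$ totally dominates $V(G)$, every off-cycle vertex is adjacent in $G$ to some vertex of $C$. Here the hypothesis $g \geq 7$ enters. If some off-cycle vertex $v$ had two neighbors $c_1, c_2$ on $C$, then combining the edges $vc_1$ and $vc_2$ with the shorter $c_1 c_2$-arc of $C$ would yield a cycle of length at most $\lfloor g/2 \rfloor + 2$, strictly less than $g$ once $g \geq 5$. Similarly, if two off-cycle vertices $u, v$ were adjacent with (unique) $C$-neighbors $u'$ and $v'$, then either $u' = v'$ produces a triangle or $u' \neq v'$ produces a cycle of length at most $\lfloor g/2 \rfloor + 3$, which is strictly less than $g$ precisely when $g \geq 7$. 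Either outcome contradicts the girth, so every off-cycle vertex has exactly one neighbor, which lies on $C$. This is exactly condition (iii).

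The main obstacle is bookkeeping the girth inequalities cleanly and confirming that $g \geq 7$ is exactly the threshold needed: the bound $\lfloor g/2 \rfloor + 3 < g$ ruling out edges between two off-cycle vertices first becomes strict at $g = 7$, which is presumably why the theorem is stated with this hypothesis. Everything else follows directly from Observations~\ref{tdsetcycle} and~\ref{DOMgirth} together with Proposition~\ref{unicyclic}.
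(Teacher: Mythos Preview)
Your proof is correct and follows essentially the same route as the paper's: reduce to $\dt(G)=g$ via Observation~\ref{DOMgirth}, use Observation~\ref{tdsetcycle} to identify the $\gt$-set as a shortest cycle $C$, then use the girth hypothesis to rule out extra adjacencies among off-cycle vertices. Your explicit bounds $\lfloor g/2\rfloor+2$ and $\lfloor g/2\rfloor+3$ are a welcome addition that makes transparent why $g\ge 7$ is exactly the needed threshold, something the paper leaves implicit.
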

\begin{proof}
We first prove that $(iii) \Rightarrow (i),(ii)$. Thus let $G$ be unicyclic graph obtained from a cycle by appending any number of leaves to each vertex of the cycle. By Proposition~\ref{unicyclic}, $\dt(G)=\Dt(G)=|V(G)|-|L(G)|=g(G)$ which proves $(i)$ and $(ii)$. Moreover implication $(ii) \Rightarrow (i)$ is trivial. To complete the proof it remains to show $(i) \Rightarrow (iii)$.

Assume first that $\dt(G) = g(G)$. Let $G_f$ be an orientation of $G$ such that $\gamma_t(G_f) = \dt(G)$ and let $S$ be a $\gt$-set of $G_f$. By Observation \ref{tdsetcycle}, $S$ must contain a cycle, call it $C$. Since $\dt(G) = g(G)$, $C$ has length $g(G)$ and is a $\gamma_t$-set. Thus, all vertices in $G_f$ must be dominated by $C$. Also, if $v \in V(G_f)\setminus C$, then $v$ is adjacent to exactly one vertex on $C$; otherwise, if $v$ has two neighbors on $C$, then there is a cycle of length less than $g(G)\ge 7$. In addition, if $v, w \in V(G_f)\setminus C$, then $v$ and $w$ are not adjacent; otherwise, we again form a cycle of length less than $g(G)$.  Thus, $G$ is obtained from a cycle by appending any number of leaves to each vertex of the cycle. 

Finally, suppose $\Dt(G)=g(G)$. Then it follows from Observation~\ref{DOMgirth} that $\dt(G)=g(G)$ and hence the previous paragraph implies that $G$ is obtained from a cycle by appending any number of leaves to each vertex of the cycle

\end{proof}

We point out that it is not clear what $G$ must look like if $\Dt(G) = \dt(G)$ when $\dt(G) \ne g(G)$ or $g(G) <7$.




Next, we consider how the upper and lower orientable total domination numbers of $G$ are effected upon removal of a vertex in $G$. The following lemma was considered in \cite{Lisa}. 
\begin{lem}[\cite{Lisa}, Lemma 11]\label{lem:Lisa} Let $G$ be a graph and let $v$ be any vertex of $G$. If $\Dt(G-v)$ is defined, then 
\[\Dt(G) \ge \Dt(G-v) \ge \Dt(G) -1.\]
\end{lem}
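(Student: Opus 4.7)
The lemma consists of two inequalities, which I will handle separately.

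For the first inequality, $\Dt(G) \ge \Dt(G-v)$, I would take a valid orientation $f$ of $G-v$ achieving $\gt((G-v)_f) = \Dt(G-v)$ and extend it to an orientation $f'$ of $G$ by directing every edge incident to $v$ toward $v$. Since $v$ has at least one neighbor in $G$ and $f$ is already valid, this $f'$ is a valid orientation. The key observation is that $v$ has no out-neighbors in $G_{f'}$ and therefore dominates nothing, so for any TD-set $S$ of $G_{f'}$ the subset $S \setminus \{v\}$ still dominates every vertex of $V(G-v)$; because the arcs of $(G-v)_f$ are exactly the arcs of $G_{f'}$ not touching $v$, $S \setminus \{v\}$ is also a TD-set of $(G-v)_f$. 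Hence $|S| \ge |S \setminus \{v\}| \ge \Dt(G-v)$ for every such $S$, which yields $\Dt(G) \ge \gt(G_{f'}) \ge \Dt(G-v)$.

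For the second inequality, $\Dt(G-v) \ge \Dt(G)-1$, I would take a valid orientation $f'$ of $G$ with $\gt(G_{f'}) = \Dt(G)$ and examine the restriction $f = f'|_{E(G-v)}$ obtained by deleting all arcs incident to $v$. When $f$ is valid, for any $\gt$-set $T$ of $(G-v)_f$ every vertex of $V(G-v)$ is dominated by $T$ in $G_{f'}$ as well (arcs are preserved), so adjoining any in-neighbor of $v$ in $G_{f'}$ to $T$ gives a TD-set of $G_{f'}$ of size at most $|T|+1$. This yields $\Dt(G) \le \gt((G-v)_f) + 1 \le \Dt(G-v) + 1$, as required.

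The main obstacle is the case in which $f$ fails to be valid, i.e., the set $B = \{w \in N(v) : v \text{ is the unique in-neighbor of } w \text{ in } G_{f'}\}$ is nonempty. My plan here is to repair $f$ by reversing suitable arcs so that each $w \in B$ gains an in-neighbor without destroying the validity of any other vertex. For each such $w$, every neighbor of $w$ in $G-v$ is an out-neighbor of $w$ in $f'$, and at least one such neighbor exists (otherwise $w$ would be isolated in $G-v$, contradicting that $\Dt(G-v)$ is defined). Since every component of $G-v$ contains a cycle, one can reverse arcs along a cycle through the component of $w$ so that the in-degree of every vertex on the cycle is preserved while $w$'s in-degree rises from $0$ to $1$, producing a valid orientation $\tilde f$ of $G-v$. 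The delicate final step, and the main technical hurdle, is showing that a $\gt$-set of $(G-v)_{\tilde f}$ still extends to a TD-set of $G_{f'}$ of size at most one larger, in spite of the reversed arcs; this would use the fact that the arcs $(v,w)$ for $w \in B$ are retained in $f'$ and can recover any domination ``lost'' by reversing the chosen edges of $G-v$.
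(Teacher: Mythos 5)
Your argument for the first inequality, $\Dt(G)\ge\Dt(G-v)$, is correct: extending an optimal valid orientation of $G-v$ by directing every edge at $v$ into $v$ keeps all in-degrees in $V(G-v)$ unchanged, makes $v$ dominate nothing, and so forces any TD-set of the extended orientation to contain a TD-set of $(G-v)_f$ after discarding $v$. This is essentially the content the paper salvages from \cite{Lisa} (restated there as $\Dt(H)\le\Dt(G)$ for induced subgraphs $H\in\mathcal{C}$).

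The second inequality is where the genuine gap lies, and you have located it yourself: the ``delicate final step'' is not a technicality but the whole difficulty, and it is not established by your sketch. After you reverse the arcs of a cycle $C$ in the component of a vertex $w$ of in-degree $0$ to obtain the valid orientation $\tilde f$, a $\gt$-set $T$ of $(G-v)_{\tilde f}$ dominates the vertices of $C$ via the \emph{reversed} arcs; back in $G_{f'}$ those dominations are gone, and there is no reason the arcs $(v,w)$ with $w\in B$ should cover the cycle vertices that $T$ now fails to dominate, since those vertices need not be out-neighbors of $v$ at all. So the inequality $\Dt(G)\le\gt((G-v)_{\tilde f})+1$ does not follow. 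This is precisely the flaw the paper identifies in the original proof of Lemma 11 of \cite{Lisa}: the authors state explicitly that they were unable to prove the lower bound and record it as Conjecture~\ref{conj:G-v}. What they can prove is the weaker statement that \emph{some} vertex $v$ satisfies $\Dt(G)\le\Dt(G-v)+1$, obtained by choosing $v$ outside a $\gt$-set $S$ of an optimal orientation $G_f$; then every vertex of $G-v$ retains an in-neighbor in $S$, so no repair of the orientation is needed and the extension argument goes through verbatim. If you want to salvage your approach for arbitrary $v$, you would need to control how reversing a cycle interacts with the original TD-sets of $G_{f'}$, and at present no such argument is known.
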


Unfortunately, there is a flaw in the proof of the lower bound in the above result. However, the upper bound holds and we restate it as follows.

\begin{lem}[\cite{Lisa}, Lemma 11] If $G$ and $H$ are in $\mathcal{C}$ where $H$ is an induced subgraph of $G$, then $\Dt(H) \le \Dt(G)$. 
\end{lem}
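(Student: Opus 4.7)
The plan is to transfer an optimal orientation of $H$ to an orientation of $G$ in such a way that every total dominating set in the extended orientation must contain a total dominating set of the original. Concretely, let $f_H$ be a valid orienting mapping of $H$ with $\gt(H_{f_H}) = \Dt(H)$ (which exists since $H \in \mathcal{C}$). I would construct an orientation $f$ of $G$ by: (a) orienting each edge of $H$ exactly as $f_H$ does, (b) orienting every edge $uv \in E(G) \setminus E(H)$ with $u \in V(H)$ and $v \in V(G) \setminus V(H)$ as $(u,v)$, and (c) orienting the edges lying entirely inside $V(G) \setminus V(H)$ as described below.

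For (c), assume $H \neq G$ (otherwise the inequality is immediate). Let $W$ be a connected component of $G[V(G) \setminus V(H)]$. Because $G$ is connected, $W$ contains some vertex $b$ adjacent to $V(H)$; under (b), this $b$ already has in-degree at least $1$. I would pick a spanning tree of $W$ rooted at $b$ and orient its tree edges away from $b$, so that every non-root vertex of $W$ acquires in-degree at least $1$ from its parent. Remaining non-tree edges of $W$ may be oriented arbitrarily. Since every vertex of $V(H)$ retains its in-degree from $f_H$, the mapping $f$ is a valid orientation of $G$.

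The key claim is that $\gt(G_f) \geq \Dt(H)$. Let $T$ be any TD-set of $G_f$. For each $v \in V(H)$, pick an in-neighbor $u \in T$ with $(u,v) \in A(G_f)$. By step (b), every edge crossing between $V(H)$ and $V(G) \setminus V(H)$ points outward, so $u \in V(H)$. Since $H$ is an \emph{induced} subgraph of $G$, the edge $uv$ actually lies in $E(H)$ and is oriented $(u,v)$ by $f_H$, so $u$ is an in-neighbor of $v$ in $H_{f_H}$ as well. Therefore $T \cap V(H)$ is a TD-set of $H_{f_H}$, which gives $|T| \geq |T \cap V(H)| \geq \gt(H_{f_H}) = \Dt(H)$. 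Consequently $\Dt(G) \geq \gt(G_f) \geq \Dt(H)$.

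The main obstacle lies in step (c): the subgraph $G[V(G) \setminus V(H)]$ need not lie in $\mathcal{C}$, so it cannot be given a valid orientation in isolation (its components may be trees, or even isolated vertices). The resolution exploits the connectedness of $G$: every component of $G[V(G) \setminus V(H)]$ contains at least one vertex adjacent to $V(H)$, which gains in-degree from the outward-pointing crossing edges and can then serve as the root of a tree-orientation supplying in-degree to the rest of the component. The use of the induced-subgraph hypothesis is also essential in the final step, since otherwise an in-neighbor $u \in V(H)$ of some $v \in V(H)$ in $G_f$ might correspond to an edge that is not present in $H$, breaking the conclusion that $T \cap V(H)$ totally dominates $H_{f_H}$.
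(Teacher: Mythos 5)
Your proof is correct and complete. The paper itself offers no argument for this lemma (it simply cites Lemma 11 of the Holley--Lai--Yue paper), but your construction -- extend an optimal valid orientation of $H$ by pointing all crossing edges away from $V(H)$ and orienting each component of $G[V(G)\setminus V(H)]$ along a spanning tree rooted at a vertex adjacent to $V(H)$ -- is exactly the natural argument, and you correctly isolate the two places where the hypotheses are used: connectedness of $G$ to make the extended orientation valid, and inducedness of $H$ to guarantee that every in-neighbor in $G_f$ of a vertex of $V(H)$ is already an in-neighbor in $H_{f_H}$, so that $T\cap V(H)$ is a TD-set of $H_{f_H}$.
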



Although we were unable to prove the lower bound in Lemma~\ref{lem:Lisa}, we still believe that the result is correct. Therefore, we state this as a conjecture.

\begin{conj}\label{conj:G-v}
Let $G$ be a graph in $\mathcal{C}$ and $v \in V(G)$ such that $G-v \in \mathcal{C}$. Then $\Dt(G) \leq \Dt(G-v)+1$. 
\end{conj}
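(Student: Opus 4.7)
The plan is to start with a valid orientation $f$ of $G$ satisfying $\gt(G_f) = \Dt(G)$ and to produce a valid orientation $f'$ of $H := G - v$ with $\gt(H_{f'}) \geq \Dt(G) - 1$; this at once gives $\Dt(G-v) \geq \gt(H_{f'}) \geq \Dt(G) - 1$. Suppose first that the restriction $f|_H$ is already a valid orientation of $H$, so we may take $f' = f|_H$. Let $S'$ be a $\gt$-set of $H_{f'}$, and since $f$ is valid, let $w$ be any in-neighbor of $v$ in $G_f$. Then $S := S' \cup \{w\}$ is a TD-set of $G_f$: each vertex of $V(H)$ has an in-neighbor in $S' \subseteq S$ because arcs of $f|_H$ coincide with the corresponding arcs of $f$; the vertex $v$ is dominated by $w$; and $w$ itself is dominated by $S'$ via the TD-property, whether or not $w \in S'$. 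Hence $\Dt(G) = \gt(G_f) \leq |S| \leq |S'| + 1 \leq \Dt(G-v) + 1$.

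The hard case is when $f|_H$ is invalid, equivalently when $T := \{u \in V(H) \colon N^-_{G_f}(u) = \{v\}\}$ is nonempty. Each $u \in T$ is adjacent to $v$ with $(v,u) \in A(G_f)$, and every $H$-edge at $u$ is directed out of $u$ in $f$. The natural attempt is to modify $f|_H$ by path-reversals to eliminate every source. For $u \in T$, greedily follow out-arcs from $u$ in $f|_H$ to form a walk $u = x_0, x_1, \ldots$; being finite, it either first revisits some $x_k = x_j$ with $j < k$, in which case $j \geq 1$ (since $u$ has in-degree $0$) and the entry vertex $x_j$ has in-degree at least $2$ in $f|_H$ via its distinct in-neighbors $x_{j-1}$ and $x_{k-1}$, or it terminates at a sink whose in-degree equals its $H$-degree. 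Using $H \in \mathcal{C}$ together with an edge-count on the out-reachable set of $u$, one expects to always be able to find a directed path in $f|_H$ from $u$ to some vertex $y$ with $|N^-_{f|_H}(y)| \geq 2$ (either the entry vertex of a reachable cycle or a non-leaf sink). Reversing this path repairs $u$, decreases $y$'s in-degree by exactly one while leaving it at least $1$, and preserves every other in-degree; iterating over $T$ would then yield a valid orientation $f'$ of $H$.

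The main obstacle, which is what keeps this statement a conjecture rather than a theorem, is that these path-reversals alter the orientation, so the pullback argument of the easy case fails for $f'$: a TD-set $S'$ of $H_{f'}$ no longer produces a TD-set of $G_f$ by adjoining $w$, because arcs along the reversed walks point the opposite way in $G_{f'}$ than in $G_f$. Closing the gap appears to require either (i) showing a priori that one can choose $f$ attaining $\Dt(G)$ with $T_f = \emptyset$, by exhibiting a $\gt$-preserving edge-flip that strictly decreases $|T_f|$ whenever it is positive; or (ii) pairing the path-reversals on $H$ with a compensating reorientation of the edges incident to $v$ to produce an orientation $\tilde f$ of $G$ which is valid, has $\tilde f|_H$ valid, and still satisfies $\gt(G_{\tilde f}) \geq \Dt(G)$, so that the easy case applies to $\tilde f$. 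Because local orientation changes can both raise and lower $\gt$, neither approach admits a transparent monotonicity argument, and a more structural handle on $\gt$-sets realizing $\Dt(G)$ appears to be necessary.
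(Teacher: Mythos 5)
The statement you were asked to prove is stated in the paper as a \emph{conjecture}, not a theorem: the authors explicitly record that the previously published proof of the lower bound in Lemma~11 of~\cite{Lisa} is flawed, that they were unable to repair it, and that they therefore only \emph{conjecture} $\Dt(G)\le \Dt(G-v)+1$. So there is no proof in the paper to compare against, and your proposal --- which candidly stops short of a complete argument --- is consistent with the actual status of the statement. Your ``easy case'' is correct, and it is essentially the content of the weaker lemma the paper does prove: if $G$ is not a cycle one may pick $v$ \emph{outside} a $\gt$-set $S$ of an orientation $G_f$ attaining $\Dt(G)$; then every vertex of $G-v$ still has an in-neighbor lying in $S$, so the restriction of $f$ to $G-v$ is automatically valid, and adjoining an in-neighbor $u$ of $v$ to a minimum TD-set of $G_f-v$ gives $\gt(G_f)\le \gt(G_f-v)+1\le \Dt(G-v)+1$ for that particular $v$. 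Your pull-back computation in this case is the same as theirs and is sound.

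The genuine gap is exactly where you locate it: for an arbitrary prescribed $v$, the restriction $f|_{G-v}$ may have sources (vertices whose unique in-neighbor in $G_f$ was $v$), and any reorientation used to repair them breaks the correspondence between TD-sets of $(G-v)_{f'}$ and TD-sets of $G_f$, because $\gt$ of a digraph is not monotone under local edge flips (the paper itself exhibits a graph $G$ with a spanning subgraph $H$ for which $\Dt(H)<\Dt(G)$, so one cannot argue by ``adding arcs only helps'' in the upper-orientable setting). Neither of your two proposed repairs --- choosing an extremal $f$ with no such sources a priori, or compensating the path reversals by reorienting the edges at $v$ while keeping $\gt\ge\Dt(G)$ --- is carried out in the paper, and as far as the paper is concerned the question remains open. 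In short: your attempt correctly proves the special case the paper proves, correctly isolates the obstruction to the general case, and does not close it; no one has.
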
  

Note that the same result holds also for $\D(G)$. Moreover if $H$ is a spanning subgraph of $G$, then $\D(G) \le \D(H)$~\cite{CH-12}. The result does not hold in terms of total domination. There exist graphs $G$ with spanning subgraphs $H$ and $\Dt(H) \leq \Dt(G)$. Let $G$ be the graph with $V(G)=\{x,y,z,u,v\}$ and $E(G)=\{xy,xz,yz,yu,zv,uv\}$ and let $H=G-uv$. Let $D$ be the digraph obtained from $G$ by orienting the edges of $G$ as $(x,y),(y,z),(z,x),(y,u),(v,z),(u,v)$. Since $\{x,y,z\}$ is not a total dominating set of $D$, it follows by Observation~\ref{tdsetcycle} that $\Dt(G) \geq \gt(D) \geq 4$. On the other hand, since $H$ is unicyclic, $\Dt(H)=|V(H)|-|L(H)|=3$, by Proposition~\ref{unicyclic}.

For the lower orientable total domination number we can prove the following expected result.

\begin{thm}\label{spanning}
If $H \in \mathcal{C}$ is a spanning subgraph of $G\in \mathcal{C}$, then $\dt(G) \leq \dt(H)$.
\end{thm}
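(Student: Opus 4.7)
The plan is to start from an optimal orientation of $H$ and extend it to an orientation of $G$ while preserving a fixed total dominating set. Let $f$ be a valid orienting mapping of $H$ with $\gt(H_f) = \dt(H)$, and let $S$ be a $\gt$-set of $H_f$, so $|S| = \dt(H)$. Define $f'$ on $E(G)$ by setting $f'(e) = f(e)$ for every $e \in E(H)$ and assigning the remaining edges of $E(G) \setminus E(H)$ any orientation whatsoever.

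Next I would verify that $f'$ is a valid orienting mapping of $G$. Every vertex $v \in V(G) = V(H)$ satisfies $|N^-_{H_f}(v)| \ge 1$ because $f$ is valid; since $A(H_f) \subseteq A(G_{f'})$, this forces $|N^-_{G_{f'}}(v)| \ge 1$. Then I would check that $S$ is a TD-set of $G_{f'}$: for any $v \in V(G)$, there exists $s \in S$ with $(s,v) \in A(H_f) \subseteq A(G_{f'})$, so $v$ is dominated by $S$ in $G_{f'}$. Hence $\gt(G_{f'}) \le |S| = \dt(H)$, giving $\dt(G) \le \gt(G_{f'}) \le \dt(H)$.

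There is essentially no obstacle here; the argument works precisely because we are taking a minimum over valid orientations and extra edges can only help (they add in-arcs, never remove them). This is also exactly the reason the analogous inequality fails for $\Dt$ as illustrated by the counterexample preceding the theorem: when maximizing, adding edges can force short directed cycles that shrink the total domination number, but when minimizing, any extension of an already good orientation stays good.
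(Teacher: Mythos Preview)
Your proof is correct and follows essentially the same approach as the paper: take an optimal valid orientation of $H$, extend it arbitrarily to $G$, and observe that adding arcs preserves validity and cannot increase the total domination number. The paper compresses the last two checks into the single remark that ``adding arcs cannot increase the total domination number,'' whereas you spell out validity and the persistence of the TD-set $S$ explicitly, but the argument is the same.
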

\begin{proof}
Let $f$ be an orienting mapping of $H$ with $\gt(H_f)=\dt(H)$. Let $f'$ be an orienting mapping of $G$ obtained from $f$ by directing edges of $E(G)\setminus E(H)$ arbitrarily. Since adding arcs cannot
increase the total domination number, we have that $\dt(G) \leq \gt(G_{f'}) \leq \gt(H_{f})=\dt(H)$.
\end{proof}

Unlike in the case of the upper orientable total domination number, we know exactly what happens with respect to the  lower orientable total domination number if a vertex is removed from the graph. Let $G$ be isomorphic to the wheel $W_n$ ($W_n$ is the graph obtained from a cycle $C_n$ by adding a universal vertex) and let $v$ be the universal vertex of $W_n$. Then $\dt(W_n)=3$ and  $\dt(W_n-v)=\dt(C_n)=n$. Thus, the  lower orientable total domination number of $G-v$ can be arbitrarily larger than $\dt(G)$. Clearly it is also possible that $\dt(G) > \dt(G-v)$. For example, let $H$ be the graph obtained from $C_n$ for $n\ge 3$ by appending any positive number of leaves. Let $G$ be the graph obtained from $H$ by appending exactly one leaf, call it $v$, to any leaf in $H$. Hence, $H= G-v$ and 
\[\dt(G) = n+1 > n = \dt(G-v).\]
 However, if $\dt(G) > \dt(G-v$), then  $\dt(G)- \dt(G-v)$ cannot be very large.

\begin{lem}
Let $G \in \mathcal{C}$ and let $v \in V(G)$ such that $G-v \in \mathcal{C}$. Then $\dt(G) \leq \dt(G-v)+1$.
\end{lem}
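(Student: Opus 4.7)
The plan is to take an optimal valid orientation of $G-v$ and extend it to a valid orientation of $G$ by carefully orienting the edges incident to $v$. Let $f$ be a valid orienting mapping of $G-v$ with $\gamma_t((G-v)_f) = \dt(G-v)$, and let $S$ be a $\gamma_t$-set of $(G-v)_f$. Since $G$ is connected and $|V(G)| \ge 2$, the vertex $v$ has at least one neighbor in $G-v$.

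I would define the extension $f'$ of $f$ to an orienting mapping of $G$ as follows: pick a neighbor $u$ of $v$ (choosing $u \in S$ if possible, and any neighbor otherwise), direct the edge $uv$ as $(u,v)$, and direct every other edge incident to $v$ away from $v$. The resulting digraph $G_{f'}$ is a valid orientation: every vertex of $G-v$ keeps its in-neighbors from $(G-v)_f$ and thus retains in-degree at least $1$, and $v$ acquires $u$ as an in-neighbor.

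It then remains to exhibit a TD-set of $G_{f'}$ of size at most $\dt(G-v)+1$. I would split into two cases based on whether $v$ has a neighbor in $S$. If $v$ has a neighbor $u \in S$, then by the choice above $u$ dominates $v$ in $G_{f'}$, and every vertex of $G-v$ is still dominated by $S$ since the orientation on $G-v$ is unchanged; hence $S$ is already a TD-set of $G_{f'}$ and $\dt(G) \le |S| = \dt(G-v)$. Otherwise, I would claim that $S \cup \{u\}$ is a TD-set of $G_{f'}$: every vertex of $G-v$ is still dominated by $S$, the vertex $v$ is dominated by $u$, and $u$ itself is dominated by $S$ because $S$ is a TD-set of $(G-v)_f$. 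This gives $\dt(G) \le |S|+1 = \dt(G-v)+1$.

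There is no real obstacle here; the proof is essentially an extension argument. The only delicate points are making sure the new orientation is still \emph{valid} (which is why we orient an arc \emph{into} $v$ rather than out of it) and recognizing that in the worst case we must pay the additive $+1$ because a neighbor of $v$ outside $S$ has to be added to the dominating set in order to cover $v$.
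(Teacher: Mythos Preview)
Your proof is correct and follows essentially the same extension argument as the paper: take an optimal valid orientation of $G-v$, extend it to $G$ by orienting the edges at $v$ so that the result remains valid, and observe that a $\gamma_t$-set of $(G-v)_f$ together with at most one extra vertex totally dominates $G_{f'}$. The only cosmetic difference is that the paper orients \emph{all} edges at $v$ toward $v$ (so any neighbor of $v$ can serve as the extra dominator) and skips your case split, whereas you orient one edge in and the rest out; both variants yield the same bound.
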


\begin{proof}
Let $f$ be an orienting mapping of $G-v$ such that $\dt(G-v)=\gt((G-v)_f)$. Let $f'$ be the orienting mapping of $G$ obtained from $f$ by orienting all edges incident with $v$ towards $v$. Then $\dt(G) \leq \gt(G_{f'}) \leq \gt((G-v)_f)+1=\dt(G-v)+1$.
\end{proof}

One final note about Conjecture~\ref{conj:G-v} is that we can prove the following weaker version.

\begin{lem} For any $G \in \mathcal{C}$ where $G$ is not a cycle, there exists a vertex $v\in V(G)$ such that $\Dt(G) \le \Dt(G-v)+1$. 
\end{lem}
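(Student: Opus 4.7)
My plan rests on the following simple but key inequality $(\star)$: for any valid orientation $f$ of $G$ and any vertex $v$ such that $(G-v)_f$ is also a valid orientation, one has $\gt(G_f) \le \gt((G-v)_f) + 1$. To prove $(\star)$, take a $\gt$-set $T$ of $(G-v)_f$; if some in-neighbor of $v$ already lies in $T$, then $T$ is itself a TD-set of $G_f$, and otherwise picking any in-neighbor $u$ of $v$ yields a TD-set $T \cup \{u\}$ of $G_f$ (the vertex $u$ is automatically dominated by $T$ since $T$ was a TD-set of $(G-v)_f$). Note also that the validity of $(G-v)_f$ forces each component of $G-v$ to contain a cycle, so $\Dt(G-v)$ is defined and $\gt((G-v)_f) \le \Dt(G-v)$. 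Consequently it suffices to exhibit a vertex $v$ and an optimal orientation $f$ of $G$ for which $(G-v)_f$ is valid.

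The easy case is when $G$ has a leaf $\ell$. In any valid orientation of $G$, the lone edge at $\ell$ must be directed into $\ell$, so $\ell$ has out-degree $0$ in every valid $f$, which makes $(G-\ell)_f$ automatically valid. Choosing $f$ with $\gt(G_f) = \Dt(G)$ and applying $(\star)$ yields $\Dt(G) \le \Dt(G-\ell) + 1$.

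The interesting case is when $\delta(G) \ge 2$; the assumption that $G$ is not a cycle then forces $|E(G)| > |V(G)|$. The main obstacle I anticipate is proving that $\Dt(G) < |V(G)|$ in this setting, since this is what will allow us to select a vertex outside an optimal TD-set. Suppose for contradiction that $\gt(G_f) = |V(G)|$ for some valid $f$. Then for every $v$ the set $V(G)\setminus\{v\}$ fails to be a TD-set, which forces the existence of a vertex $u_v$ whose only in-neighbor in $G_f$ is $v$. The assignment sending each in-degree-$1$ vertex to its unique in-neighbor is then surjective onto $V(G)$, so every vertex of $G_f$ has in-degree exactly $1$, giving $|E(G)| = |V(G)|$, a contradiction.

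With $\Dt(G) < |V(G)|$ in hand, pick $f$ achieving $\Dt(G)$, a $\gt$-set $S$ of $G_f$, and any $v \in V(G) \setminus S$. I claim $(G-v)_f$ is a valid orientation of $G-v$: otherwise some $u$ would have $\{v\}$ as its only in-neighbor in $G_f$, and $S$ being a TD-set would force $v \in S$, contradicting $v \notin S$. Combining $(\star)$ with the earlier bound $\gt((G-v)_f) \le \Dt(G-v)$ now yields $\Dt(G) = \gt(G_f) \le \gt((G-v)_f) + 1 \le \Dt(G-v) + 1$, completing the proof.
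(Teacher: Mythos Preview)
Your proof is correct and follows essentially the same line as the paper's: choose an orientation $f$ achieving $\Dt(G)$, find a vertex $v$ with $(G-v)_f$ still valid, and apply the inequality $(\star)$. The paper does exactly this, picking $v$ outside a $\gt$-set $S$ of $G_f$ and invoking the known result (from \cite{HLY-99,Lisa}) that $\Dt(G)=|V(G)|$ only for cycles to guarantee $V(G)\setminus S\neq\emptyset$. Your argument is a bit more self-contained: rather than citing that characterization, you handle the leaf case directly and, when $\delta(G)\ge 2$, give the short in-degree counting argument to show $\Dt(G)<|V(G)|$. This buys independence from the cited result at the cost of a case split; otherwise the two proofs coincide.
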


\begin{proof}
Let $G_f$ be an orientation of $G$ such that $\gt(G_f) = \Dt(G)$. Let $S$ be a $\gt(G_f)$-set. Since $G$ is not a cycle, $V(G) - S \ne \emptyset$. Pick a vertex $v \in V(G) - S$. Note that each component of $G-v$ is in $\mathcal{C}$ by Observation~\ref{tdsetcycle} as $S$ is assumed to be a total dominating set of $G_f$. Moreover, since $S$ is a total dominating set of $G_f$, for each vertex $x \in V(G_f)-\{v\}$, there exists $s \in S$ such that $(s,x) \in A(G_f)$. Thus every vertex in $G_f-v$ has in-degree at least $1$ and hence there exists a minimum total dominating set $T$ of $G_f-v$. For any in-neighbor $u$ of $v$  in $G_f$, $T\cup \{u\}$ is a total dominating set of $G_f$. Therefore, \[\gt(G_f) \le \gt(G_f-v)+1 \le \Dt(G-v)+1.\]
\end{proof}


Since the domination number can be computed for any orientation $f$ of $G$, but the total domination number can only be obtained  for valid orientations, it is possible that $\D(G)=\gamma(G_f)$, where $f$ is an orientation that contains a vertex with in-degree 0. Thus, $\D(G)$ is not necessarily smaller than $\Dt(G)$. For example, consider the graph $G$  obtained from $K_3=uvx$ by appending exactly two leaves to $u$ and exactly two leaves to $x$. If $D$ is any orientation of $G$ where the leaves of $G$ have in-degree $0$, then 
 $5 = \D(G)$. However,  $\Dt(G) = 3$ by Proposition~\ref{unicyclic}. On the other hand, $\Dt(K_{2, n}) = 4 < n = \D(K_{2, n})$, which implies that the upper orientable total domination number can be arbitrarily smaller than the upper orientable domination number. This leads us to the following observation.

\begin{obs}\label{lem:DOM} For any $G \in \mathcal{C}$, if $G$ has an orienting mapping $f$ such that $\gamma(G_f) = \D(G)$ and every vertex in $G_f$ has in-degree at least $1$, then $\D(G) \le \Dt(G)$.
\end{obs}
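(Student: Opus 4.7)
The plan is to chain together two easy inequalities. The first uses that, in any digraph, a total dominating set is automatically a dominating set (since the domination condition is only required on vertices outside the set, while total domination requires an in-neighbor in $S$ for every vertex). The second uses the definitions of $\D(G)$ and $\Dt(G)$ as maxima over orienting mappings.

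Concretely, suppose $f$ is an orienting mapping of $G$ with $\gamma(G_f)=\D(G)$ such that every vertex of $G_f$ has in-degree at least $1$. Because $G_f$ is a valid orientation, $\gt(G_f)$ is defined, and therefore $\gt(G_f)\le \Dt(G)$ by the definition of $\Dt(G)$ as the maximum of $\gt$ over valid orienting mappings. I would then check the standard fact that in any digraph $D$ a TD-set $S$ is a dominating set: every $v\in V(D)$ has an in-neighbor in $S$, and in particular every $v\in V(D)\setminus S$ does, which is the condition for $S$ to dominate $D$. Applying this to a minimum TD-set of $G_f$ yields $\gamma(G_f)\le \gt(G_f)$. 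Combining,
\[
\D(G)=\gamma(G_f)\le \gt(G_f)\le \Dt(G),
\]
which is exactly what we want.

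There is no real obstacle here; the content of the observation lies entirely in the hypothesis that some orientation simultaneously realizes $\D(G)$ and is valid. Without that hypothesis the two maxima are taken over different sets of orientations, and (as the preceding examples in the paper show) the inequality can go either way. So the ``proof'' is essentially a one-line bookkeeping argument, and the only thing to be careful about in writing it up is to record where validity of $f$ is used (to guarantee $\gt(G_f)$ exists and is bounded by $\Dt(G)$).
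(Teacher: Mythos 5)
Your argument is correct and is essentially identical to the paper's: both take a minimum total dominating set of the valid orientation $G_f$ realizing $\D(G)$, observe it is a dominating set, and chain $\D(G)=\gamma(G_f)\le\gt(G_f)\le\Dt(G)$. Nothing further is needed.
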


\begin{proof}
Let $G_f$ be such an orientation of $G$ and take any minimum total dominating set $S$ of $G_f$. Then $S$ is a dominating set of $G_f$ which implies 
\[\D(G) \le |S| =\gt(G_f) \le \Dt(G).\]
\end{proof}

\section{Complete graphs and complete bipartite graphs}\label{sec:complete}

The upper and lower orientable total domination numbers of complete graphs were studied  in~\cite{Lisa}. It was proved that $\Dt(K_{n+1}) \in \{\Dt(K_n),\Dt(K_n)+1\}$ and if $\Dt(K_{n+1})=\Dt(K_n)+1$, then $\Dt(K_i)=\Dt(K_n)+1$ for any $i \in \{n+1,n+2,\ldots , 2n+2\}$. In the same paper it was also proved that for any $k$, where $3=\dt(K_n) \leq k \leq \Dt(K_n)$ there exists an orienting mapping $f$ of $K_n$ such that $\gt((K_n)_f)=k$. They also pose a conjecture that the last statement holds for any graph $G$.

\begin{conj}\cite{Lisa}\label{conj1}
Let $G \in \mathcal{C}$ and let $c$ be any integer such that $\dt(G) \leq c \leq \Dt(G)$. Then there exists an orientation $f$ of $G$ such that $\gt(G_f)=c$. 
\end{conj}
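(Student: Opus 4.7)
The plan is to attack this conjecture by a discrete intermediate value argument on the space of valid orientations of $G$. Fix valid orientations $f_0$ and $f_*$ with $\gt(G_{f_0})=\dt(G)$ and $\gt(G_{f_*})=\Dt(G)$. I would try to construct a finite sequence $f_0,f_1,\ldots,f_k=f_*$ of valid orientations such that $|\gt(G_{f_i})-\gt(G_{f_{i+1}})|\leq 1$ at every step; if such a sequence exists, its image set of $\gt$-values is a set of consecutive integers containing both $\dt(G)$ and $\Dt(G)$, which is precisely what the conjecture asks.

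The natural local move is the reversal of a single arc, and the accompanying local lemma should not be hard to verify. If $f$ and $f'$ are valid orientations differing only on an edge $uv$, with $(u,v)\in A(G_f)$ and $(v,u)\in A(G_{f'})$, and if $S$ is a $\gt$-set of $G_f$, then a short case analysis on whether $u$ and $v$ lie in $S$ shows that $S$ can be repaired into a TD-set of $G_{f'}$ by adding at most one new in-neighbour (in $G_{f'}$) of the ``newly uncovered'' vertex; such an in-neighbour exists because $G_{f'}$ is valid. This gives $\gt(G_{f'})\leq\gt(G_f)+1$, and the reverse inequality follows by symmetry.

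The real difficulty is the global step. Unlike in the ordinary domination setting, single-arc reversals do not suffice to connect any two valid orientations, since flipping one arc may drive some vertex to in-degree zero; the ``valid-orientation graph'' under single-arc moves is not connected even on $C_4$. My proposed fix is to enlarge the move set by allowing reversals of directed cycles of the current orientation, because such a reversal preserves the in-degree of every vertex and therefore maps valid orientations to valid orientations. Any two valid orientations having the same in-degree sequence are then joined by cycle reversals (the symmetric difference is Eulerian relative to $f$ and decomposes into directed cycles), and the remaining job is to move between different in-degree sequences by single-arc flips that avoid creating in-degree-zero vertices, which should be tractable because $G\in\mathcal{C}$ supplies a cycle to ``absorb'' or ``reroute'' the changes in in-degree.

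The hardest step, and where I expect the argument to be genuinely delicate, is bounding the change in $\gt$ under a long cycle reversal by $1$. Since the $C$-in-neighbour of every vertex on the reversed cycle $C$ swaps from its predecessor to its successor, TD-sets of the two orientations can align with $C$ in rather different ways, and a naive bound only gives $|\gt|$-change of order $|V(C)|$. My plan would be to decompose any cycle reversal changing $\gt$ by $\geq 2$ into two moves each changing $\gt$ by at most $1$, using a chord of $G$, or—if $C$ is induced—an adjacent non-cycle edge provided by $G\in\mathcal{C}$, to split $C$ at a point where the TD-set discrepancy arises. A minimum-counterexample argument on the length of the offending cycle should then close the gap; securing this decomposition lemma is, I believe, the crux, and its proof is what would convert the strategy above from a sketch into a theorem.
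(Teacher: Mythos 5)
First, be aware that the statement you are proving is stated in the paper as a \emph{conjecture} (attributed to~\cite{Lisa}); the paper contains no proof of it. The authors verify it only for complete graphs (citing~\cite{Lisa}) and for complete bipartite graphs (Theorem~\ref{CompleteBipartiteConj}), in both cases by exhibiting, for each $c$ with $\dt(G)\le c\le \Dt(G)$, an explicit orientation realizing $c$. So there is no paper argument to compare against: you are attacking an open problem, and your submission must stand or fall as a proof on its own.

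As a proof it has a genuine gap, which you yourself locate correctly but do not close. The single-arc-reversal lemma is fine: if $f$ and $f'$ are both valid and differ only on $uv$, the only vertex whose in-neighbourhood shrinks is the new tail $v$, and validity of $f'$ supplies a replacement in-neighbour $w\notin\{u,v\}$ whose own domination by $S$ is unaffected, so $|\gt(G_f)-\gt(G_{f'})|\le 1$. The decomposition of the symmetric difference of two valid orientations with equal in-degree sequences into directed cycles of $G_f$ is also standard, and cycle reversal preserves validity. But the two remaining steps are exactly where the content of the conjecture lives, and neither is established. (i) Passing between different in-degree sequences by single flips that never create an in-degree-$0$ vertex is asserted, not proved; a flip $(u,v)\to(v,u)$ requires $v$ to have in-degree at least $2$ beforehand, and you give no argument that a suitable flip is always available en route. (ii) Far more seriously, the claim that a directed-cycle reversal changes $\gt$ by at most $1$ is unproven and plausibly false: reversing $C$ swaps the $C$-in-neighbour of every vertex of $C$ from predecessor to successor, a $\gt$-set may rely on the predecessors for up to $|C|/2$ vertices, and the naive bound degrades linearly in $|C|$. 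Your proposed repair --- split the reversal at a chord or an incident edge and induct on the length of an offending cycle --- presupposes that valid intermediate orientations exist between $f$ and the fully reversed orientation, which can fail (on $C_n$ itself the two valid orientations are joined by no proper sub-move, and more generally when every vertex of a directed cycle has in-degree $1$ any partial reversal kills validity unless outside edges are flipped first); and even where intermediates exist, you give no argument that each half-move changes $\gt$ by at most $1$, which is the same difficulty one level down. Until this decomposition lemma is stated precisely and proved, what you have is a research plan, not a proof, and the conjecture remains open.
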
 

Erd\H os~\cite{Erdos} first studied the  upper orientable domination number of complete graphs ~\cite{Erdos}, presenting both lower and upper bounds.  His bounds were improved by Szekeres and Szekeres to $\log_2n - 2\log_2(\log_2n) \leq \D(K_n) \leq \log_2n - \log_2(\log_2n)+2$ \cite{SS-65}, but the exact values are still not known. We use these bounds and the following to obtain the bounds for the upper orientable total domination number of complete graphs. Recall that a tournament with $n$ vertices is an orientation of $K_n$.
\begin{thm}[\cite{SS-65}]\label{thm:SS} If $T$ is a tournament with $n$ vertices, then $\gamma(T) \le \log_2 n - \log_2(\log_2 n) + 2$.
\end{thm}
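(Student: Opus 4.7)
The plan is a greedy argument exploiting that the sum of out-degrees in any tournament $T$ on $m$ vertices is $\binom{m}{2}$. By averaging, some vertex $v^\ast$ has $d^+(v^\ast) \ge \lceil (m-1)/2\rceil$; adding $v^\ast$ to a dominating set covers $v^\ast$ together with all of its out-neighbors, so the undominated vertices form a subtournament on at most $\lfloor (m-1)/2\rfloor$ vertices. Iterating on this subtournament yields the recursion
\[
f(m) \le 1 + f\bigl(\lfloor(m-1)/2\rfloor\bigr),
\]
and unwinding gives the preliminary bound $f(n)\le \lceil \log_2(n+1)\rceil$, which already has the correct leading term $\log_2 n$.

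To squeeze out the extra $\log_2 \log_2 n$ promised by the theorem, I would run the greedy for only $k$ rounds, leaving an undominated subtournament $U$ with $|U|\le n/2^k$, and handle $U$ with a separate argument. The naive finish of adding all of $U$ to the dominating set gives $k + n/2^k$, optimized at about $\log_2 n + O(1)$, which is too weak. Instead, I would try to dominate $U$ using vertices selected from the full set $V(T)$ rather than only from $U$: the arcs between $V(T)\setminus U$ and $U$ are distributed so that averaging produces a vertex with many out-neighbors inside $U$, allowing a second-phase greedy whose cost should grow like $\log_2|U|$ rather than $|U|$. Balancing $k$ against this residual cost is where the additive constant $2$ in the statement would be fixed.

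The main obstacle is that the most direct versions of this two-phase strategy keep reproducing $\log_2 n + O(1)$, and the $\log_2\log_2 n$ saving is genuinely delicate; it is matched within a factor of two by Erd\H os's probabilistic lower bound, so it cannot be cosmetic. I would therefore also try a probabilistic selection: choose $S$ uniformly at random of size $k$ and estimate the expected number of undominated vertices by $\sum_v \binom{d^+(v)}{k}/\binom{n-1}{k}$, then apply convexity of $\binom{\cdot}{k}$ and the identity $\sum_v d^+(v) = \binom{n}{2}$ to bound this by $n/2^k$ in the worst case. Pinning down exactly where this probabilistic count drops below the residual cost—and then absorbing the residual into $S$ at bounded additive expense—is, in my view, the delicate heart of the Szekeres--Szekeres argument.
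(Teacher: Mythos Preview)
This theorem is not proved in the paper; it is quoted from \cite{SS-65} and used as a black box in the proof of Theorem~\ref{complete}. There is no in-paper argument to compare your proposal against.

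On the substance of your sketch: the greedy bound $\gamma(T)\le\lceil\log_2(n+1)\rceil$ is correct and standard, and you are right that both naive ways of finishing a truncated greedy---absorbing the residual $U$ wholesale, or running a fresh greedy inside $U$---return only $\log_2 n + O(1)$. The gap is that neither of your proposed refinements actually recovers the $-\log_2\log_2 n$ term. In particular, the probabilistic step is miscalibrated: with a uniformly random $k$-set $S$ the expected number of undominated vertices is $\sum_v \binom{d^+(v)}{k}\big/\binom{n}{k}$, and since $x\mapsto\binom{x}{k}$ is convex, Jensen gives $\sum_v \binom{d^+(v)}{k}\ge n\binom{(n-1)/2}{k}$, the \emph{reverse} of the inequality you invoke. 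So random selection does no better than the deterministic $n/2^k$ residual from greedy, and optimizing $k$ again bottoms out near $\log_2 n$. The genuine $-\log_2\log_2 n$ saving in Szekeres--Szekeres comes from an argument of a different shape: a recursive lower bound, of order $k\,2^k$ rather than $2^k$, on the minimum number of vertices a tournament must have for its domination number to exceed $k$; inverting that relation is what produces the extra logarithm. Your proposal does not contain this ingredient, and you essentially acknowledge as much.
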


\begin{thm}\label{complete} For $n\ge 3$,
 \[\log_2n - 2\log_2(\log_2 n) \le \Dt(K_n) \le \log_2 n - \log_2(\log_2 n) + 4.\]
\end{thm}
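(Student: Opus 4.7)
The plan is to leverage the Szekeres--Szekeres estimate (Theorem~\ref{thm:SS}) in both directions, using two elementary facts: for any valid tournament $T$ one has $\gt(T)\geq \gamma(T)$ (since every TD-set is a dominating set), and conversely a dominating set of a valid tournament can be upgraded to a TD-set by adjoining at most two extra vertices. These two observations translate the known bounds on $\gamma$ of tournaments into bounds on $\gt$.

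For the lower bound, I would exhibit a valid orientation $T$ of $K_n$ with $\gamma(T)\geq \log_2 n - 2\log_2\log_2 n$, and then conclude $\Dt(K_n)\geq \gt(T)\geq \gamma(T)$. The Szekeres--Szekeres lower bound on $\D(K_n)$ arises from a random-tournament argument: the expected number of dominating sets of size $k$ in a uniformly random tournament is at most $\binom{n}{k}(1-2^{-k})^{n-k}$, which drops below $1$ at $k\approx \log_2 n - 2\log_2\log_2 n$, so with positive probability no set of that size dominates. Simultaneously, a random tournament has some vertex of in-degree $0$ with probability at most $n\cdot 2^{-(n-1)}$, which is small. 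Combining these two events yields, for $n$ sufficiently large, a random tournament that is both a valid orientation and has $\gamma(T)\geq \log_2 n - 2\log_2\log_2 n$. The finitely many remaining small cases can be checked against the trivial bound $\dt(K_n)\geq 3$, since for small $n$ the right-hand side $\log_2 n - 2\log_2\log_2 n$ is at most $3$.

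For the upper bound, let $T$ be any valid orientation of $K_n$. By Theorem~\ref{thm:SS}, $T$ has a dominating set $S$ with $|S|\leq \log_2 n - \log_2\log_2 n + 2$. The induced subdigraph $T[S]$ is itself a tournament, and hence contains at most one vertex of in-degree zero within $S$ (two such vertices could not be joined by any arc). If no such vertex exists, $S$ is already a TD-set. Otherwise, let $u$ be that unique vertex and $w$ any in-neighbor of $u$ in $T$, which exists by validity. Then $S\cup\{w\}$ dominates every vertex of $V(T)\setminus\{w\}$ and every vertex of $S$ now has an in-neighbor in $S\cup\{w\}$; if $w$ itself is not dominated, we adjoin one further in-neighbor of $w$. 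This produces a TD-set of cardinality at most $|S|+2\leq \log_2 n - \log_2\log_2 n + 4$, establishing the desired inequality. Note this argument actually shows the stronger statement $\gt(T)\leq \gamma(T)+2$ for every valid tournament $T$.

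The only real obstacle is the cosmetic verification that the extremal random tournament in the Szekeres--Szekeres lower bound can be chosen to be a valid orientation; this is immediate from the in-degree estimate above and consolidates both probabilistic events into a single existence statement. Everything else is a short manipulation: $\gt\geq \gamma$ for the lower direction, and the ``at most one vertex of in-degree zero in a tournament'' observation plus one additional in-neighbor for the upper direction.
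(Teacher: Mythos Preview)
Your proposal is correct in both directions, but the two halves differ from the paper's proof in interesting ways.

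For the \emph{lower bound}, the paper gives a shorter argument that avoids re-running the probabilistic construction. If an orientation of $K_n$ has a vertex $v$ of in-degree $0$, then $v$ has out-degree $n-1$ and $\{v\}$ is already a dominating set; hence every orientation achieving $\gamma = \D(K_n)>1$ is automatically valid, and Observation~\ref{lem:DOM} yields $\Dt(K_n)\ge \D(K_n)\ge \log_2 n - 2\log_2\log_2 n$ immediately. This bypasses your union-bound bookkeeping and the somewhat loose ``small $n$'' patch (you would still owe the verification that the range where the random-tournament estimate applies actually meets the range where the right-hand side is at most $3$).

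For the \emph{upper bound}, your argument is considerably simpler than the paper's. The paper builds a greedy chain of subtournaments $G_1\supset G_2\supset\cdots$ by repeatedly removing a vertex of maximum out-degree together with its closed out-neighborhood, and then does a case analysis on how the process terminates. Your observation that a minimum dominating set $S$ of a valid tournament can be upgraded to a TD-set by adjoining a single in-neighbor $w$ of the unique source of $T[S]$ is both shorter and sharper. In fact your second ``$+1$'' is unnecessary: since $u$ had no in-neighbor in $S$, we have $w\notin S$, so $w$ is already dominated by $S$, and $S\cup\{w\}$ is a TD-set. Thus $\gt(T)\le\gamma(T)+1$ for every valid tournament $T$, which actually gives the upper bound with constant $3$ in place of $4$.
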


\begin{proof}
For the lower bound, any orientation of $K_n$ that contains at least one vertex with in-degree $0$ has domination number $1$ which is strictly less than $\D(K_n)$. Thus, any orientation $f$ such that $\gamma((K_n)_f) = \D(K_n)$ satisfies that every vertex of $(K_n)_f$ has in-degree at least $1$. By Observation~\ref{lem:DOM}, $\log_2n - 2\log_2(\log_2 n) \le \D(K_n) \le \Dt(K_n)$. For the upper bound, let $(K_n)_f$ be any valid orientation of $K_n$ and set $G_1 = (K_n)_f$. As in the proof of \cite{lu-2000}, we create a sequence of subtournaments of $G_1$ by deleting dominating sets greedily. Given $G_i$, let $x_i$ be a vertex with maximum out-degree in $G_i$. Let $V_{i+1} = N_{G_i}^-(x_i)$ and let $G_{i+1}$ be the subtournament induced by $V_{i+1}$. Continue this process until either ${r= \left\lceil \log_2 n - \log_2(\log_2 n) + 1\right\rceil}$ or $V_{r+1} = \emptyset$. Let $X=\{x_1,\ldots , x_r\}$ and let  $S= V(G_1) - X - V_{r+1}$.

Suppose first that $r \le \left\lceil \log_2 n - \log_2(\log_2 n) + 1\right\rceil$ and $V_{r+1} = \emptyset$. Since $f$ is valid  there exists $s \in S$ such that $(s, x_r) \in A(G_1)$. We claim that  $X \cup \{s\}$ is a total dominating set of $G_1$ of cardinality at most $\left\lceil \log_2 n - \log_2(\log_2 n) + 1\right\rceil + 1$. To see this, note that $X$ dominates $S$ and for each $i \in [r-1]$, $x_{i+1}$ dominates $x_i$. Thus, $X\cup \{s\}$ is a total dominating set of $G_1$ and we are done. 

Thus, we shall assume that the process terminates when $V_{r+1}\ne \emptyset$ and $r = \left\lceil \log_2 n - \log_2(\log_2 n) + 1\right\rceil$. Suppose first that some $s \in S$ dominates $V_{r+1}$ and let $v \in V_{r+1}$. Using a similar argument to that above, one can see that $T = X \cup \{s, v\}$ is a total dominating set of $G_1$ of cardinality $r+2 = \left\lceil \log_2 n - \log_2(\log_2 n) + 1\right\rceil + 2$ and we are done. 

Finally, suppose no vertex in $S$ dominates $V_{r+1}$. In this case, $V_{r+1}$ dominates $G_1$. Suppose first that $G[V_{r+1}]$ contains no vertex with in-degree $0$. Thus, $V_{r+1}$ is a total dominating set of $G_1$ of cardinality $|V_{r+1}|$. Therefore, we shall assume that $v \in V_{r+1}$ has in-degree $0$ in $G[V_{r+1}]$. Pick any in-neighbor $s \in S$ of $v$. Since $V_{r+1}$ dominates $s$, there exists $w \in V_{r+1}$ that dominates $s$. Now $V_{r+1} \cup \{s\}$ is a total dominating set of $G_1$ of cardinality $|V_{r+1}| + 1$. In either case, we have a total dominating set of $G_1$ of cardinality at most $|V_{r+1}| +1$. Note that $|V_{r+1}| \le n/2^r$ because in every tournament of order $p$ there exists a vertex with out-degree at least $p/2$. It follows that $G_1$ contains a total dominating set of cardinality at most 
\begin{eqnarray*}
|V_{r+1}| + 1 &\le& \frac{n}{2^r}+1\\
&\le& \frac{n}{2^{\log_2n - \log_2(\log_2 n)+1}} +1 \\&=& \frac{\log_2(n)}{2} +1\\
&<& \log_2n - 2\log_2(\log_2 n) + 2.
\end{eqnarray*} 
\end{proof}

We continue this section by obtaining exact values for the lower and upper orientable total domination numbers of complete bipartite graphs. The upper and lower orientable total domination numbers of complete multipartite graphs were already studied in~\cite{Lai-2001}. However, their results do not hold for complete bipartite graphs as the proofs start by choosing three vertices from three different partite sets.

\begin{prop}\label{CompleteBipartiteSmallDom}
For $2 \leq m \leq n$, $\dt(K_{m, n})  = 4$.
\end{prop}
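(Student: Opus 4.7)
The plan is a clean lower-bound-matches-upper-bound argument, where both directions are short.

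For the lower bound, I will apply Observation~\ref{DOMgirth}. Since $m,n \ge 2$, the complete bipartite graph $K_{m,n}$ contains a $4$-cycle and, being bipartite, contains no odd cycle, so $g(K_{m,n}) = 4$. Hence $\dt(K_{m,n}) \ge 4$. (As a sanity check on this bound: by Observation~\ref{tdsetcycle} any TD-set induces a subgraph each of whose components contains a cycle, and cycles in $K_{m,n}$ have length at least $4$, giving the same lower bound directly.)

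For the upper bound I will construct an explicit valid orientation of $K_{m,n}$ admitting a TD-set of size $4$. Let $A \cup B$ be the bipartition with $A = \{a_1,\dots,a_m\}$ and $B = \{b_1,\dots,b_n\}$, and set $S = \{a_1,a_2,b_1,b_2\}$. I orient the $4$-cycle $a_1 b_1 a_2 b_2 a_1$ cyclically as
\[
a_1 \to b_1,\quad b_1 \to a_2,\quad a_2 \to b_2,\quad b_2 \to a_1,
\]
so that within $S$ each vertex already has in-degree at least $1$ and is dominated by some vertex of $S$. For each $a \in A \setminus \{a_1,a_2\}$, I orient the edge $b_1 a$ as $b_1 \to a$, and for each $b \in B \setminus \{b_1,b_2\}$, I orient $a_1 b$ as $a_1 \to b$; this ensures that every vertex outside $S$ is dominated by $S$ and has in-degree at least $1$. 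All remaining edges (those between $A \setminus \{a_1,a_2\}$ and $B \setminus \{b_1,b_2\}$, and the edges from $\{a_2,b_2\}$ to the rest) I orient arbitrarily — they play no role in validity or domination. Let $f$ denote the resulting orienting mapping; by construction $(K_{m,n})_f$ is a valid orientation and $S$ is a TD-set, so $\dt(K_{m,n}) \le \gamma_t((K_{m,n})_f) \le 4$.

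Combining the two bounds yields $\dt(K_{m,n}) = 4$. There is really no hard step here: the only thing that requires care is to verify that every vertex of the constructed orientation has in-degree at least $1$ (otherwise the orientation is not valid and $\gamma_t$ is undefined), and the cyclic orientation of the $4$-cycle together with the uniform choice of in-edges $b_1 \to a$ and $a_1 \to b$ for the remaining vertices makes this immediate.
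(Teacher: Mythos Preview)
Your proof is correct and takes essentially the same approach as the paper: both use the girth/cycle observation for the lower bound (you cite Observation~\ref{DOMgirth}, the paper cites Observation~\ref{tdsetcycle} directly, which amounts to the same thing since $K_{m,n}$ is triangle-free), and both exhibit exactly the same valid orientation---a directed $4$-cycle on $\{a_1,a_2,b_1,b_2\}$ together with $a_1 \to b$ and $b_1 \to a$ for the remaining vertices---to get the upper bound.
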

\begin{proof}
Let $G = K_{m,n}$ and write $V(G) = X \cup Y$ where $X = \{x_1, \dots, x_m\}$ and $Y= \{y_1, \dots, y_n\}$ are the partite sets of $G$. By Observation \ref{tdsetcycle}, we know that for any orientation $G_h$ of $G$ every component of any total dominating set must contain at least one cycle. Since $G$ is triangle-free, it must be the case that $4 \leq \dt(G)$.

Let $G_f$ be an orientation of $G$ containing arcs $(x_1,y_1), (y_1,x_2), (x_2,y_2),$ and $(y_2,x_1)$ as well as $(x_1,y_j)$ for all $3 \leq j \leq n$ and $(y_1,x_i)$ for all $3 \leq i \leq m$. Note that this is a valid orientation since each vertex has in-degree at least one. In addition, $\{x_1, x_2, y_1, y_2\}$ is a total domination set. Hence, $\dt(G) \leq 4$. 

\end{proof}

\begin{thm} \label{CompleteBipartiteBigDom}
For $2 \leq m \leq n$, $\Dt(K_{m, n}) = m + 2$. 
\end{thm}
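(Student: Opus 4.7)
The plan is to prove the equality by establishing the lower bound $\Dt(K_{m,n}) \ge m+2$ via an explicit construction and the upper bound $\Dt(K_{m,n}) \le m+2$ via a private-neighbor argument. The starting structural observation, used in both directions, is that for any valid orientation $f$ of $K_{m,n}$ with bipartition $(X,Y)$, every total dominating set $S$ of $G_f$ decomposes as $S = S_X \cup S_Y$ with $S_X \subseteq X$ and $S_Y \subseteq Y$; because $K_{m,n}$ is bipartite, $S_X$ must dominate every vertex of $Y$ in $G_f$ and $S_Y$ must dominate every vertex of $X$. Letting $\alpha = \alpha(f)$ denote the minimum size of a subset of $X$ dominating $Y$ in $G_f$ and $\beta = \beta(f)$ the minimum size of a subset of $Y$ dominating $X$, this gives $\gt(G_f) = \alpha + \beta$, so the theorem reduces to controlling $\alpha + \beta$.

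For the lower bound I will use the orientation $f$ of $K_{m,n}$ in which $(y_j, x_j)$ is an arc for every $j \in [m]$, while every other edge $x_iy_j$ is oriented as $(x_i, y_j)$. A direct check confirms validity, and one computes $N^+_{G_f}(x_i) = Y \setminus \{y_i\}$ for each $i \in [m]$, together with $N^+_{G_f}(y_j) = \{x_j\}$ for $j \in [m]$ and $N^+_{G_f}(y_j) = \emptyset$ for $j > m$. Any two distinct vertices of $X$ together cover $Y$, so $\alpha = 2$, and each $x_j$ has only $y_j$ as an in-neighbor from $Y$, forcing $\{y_1,\ldots,y_m\} \subseteq S_Y$ and hence $\beta = m$. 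Consequently $\gt(G_f) = m+2$.

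For the upper bound, let $G_f$ be an arbitrary valid orientation of $K_{m,n}$. Validity forces $\alpha \ge 2$: if $\alpha = 1$ then some $x \in X$ would satisfy $N^+_{G_f}(x) = Y$, giving it in-degree $0$. Fix a minimum $X' \subseteq X$ dominating $Y$ in $G_f$, with $|X'| = \alpha$. Minimality supplies, for each $x \in X'$, a private vertex $y_x \in Y$ with $N^-_{G_f}(y_x) \cap X' = \{x\}$; equivalently, $N^+_{G_f}(y_x) \supseteq X' \setminus \{x\}$. Choose two distinct $x_1, x_2 \in X'$. Then the pair $\{y_{x_1}, y_{x_2}\}$ already dominates $X'$: every $x \in X' \setminus \{x_1\}$ lies in $N^+_{G_f}(y_{x_1})$ and $x_1 \in N^+_{G_f}(y_{x_2})$. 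For each remaining $x'' \in X \setminus X'$, validity at $x''$ provides some $y(x'') \in Y$ with $y(x'') \to x''$, and the set $Y^* = \{y_{x_1}, y_{x_2}\} \cup \{y(x'') : x'' \in X \setminus X'\}$ therefore dominates $X$ and has size at most $m - \alpha + 2$. Hence $\beta \le m - \alpha + 2$ and $\alpha + \beta \le m+2$, giving $\gt(G_f) \le m+2$.

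The main obstacle is the inequality $\beta \le m - \alpha + 2$. The tempting approach of taking the TD-set $X \cup \{y_1, y_2\}$ only works when two vertices of $Y$ together dominate $X$, i.e.\ when $\beta \le 2$, and the lower-bound construction (with $\beta = m$) shows this can fail badly. The private-neighbor trick is what circumvents this: the minimality of $X'$ automatically produces two $Y$-vertices that cover all $\alpha$ vertices of $X'$, leaving only the $m - \alpha$ vertices of $X \setminus X'$ to be dominated one at a time. I expect the bipartite decomposition of TD-sets and the verification of the lower-bound orientation to be routine; the crux is the trade-off between $|X'|$ and $|Y^*|$ exploited in the private-neighbor step.
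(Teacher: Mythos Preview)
Your proof is correct and follows essentially the same approach as the paper. The lower-bound orientation is identical, and for the upper bound both arguments hinge on the same private-neighbor trick: two private $Y$-vertices of a minimal $X$-set already dominate that entire $X$-set, leaving only $m-\alpha$ further $X$-vertices to cover one at a time. Your $\gamma_t(G_f)=\alpha+\beta$ framing is a clean repackaging of the paper's contradiction argument, but the substance is the same.
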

\begin{proof}
Let $G = K_{m,n}$ and write $V(G) = X \cup Y$ where $X = \{x_1, \dots, x_m\}$ and $Y= \{y_1, \dots, y_n\}$ are the partite sets of $G$. For the sake of contradiction, assume there exists an orienting mapping $h$ where $\gamma_t(G_h) \geq m + 3$, and let $A$ be a $\gamma_t$-set of $G_h$. Assume $|A \cap X| = k$ (where $k>0$ as $X$ is an independent set) and $|A \cap Y| \geq m - k + 3$.  Relabel the vertices if necessary so that $A \cap X = \{x_1, \ldots, x_k\}$. We know each $x_i$ for $i \in [k]$ has a private neighbor $y_i \in Y$ with respect to $A$, i.e.\ $y_i \in \textrm{pn}(x_i, A)$. Thus, $y_1$ dominates $\{x_2, \ldots, x_k\}$, and $y_2$ dominates $\{x_1, x_3,  \ldots, x_k\}$. Hence, $\{y_1, y_2\}$ dominates $\{x_1, \ldots, x_k\}$. Note that we can find a set $B \subseteq Y - \{y_1, y_2\}$ of cardinality at most $m-k$ such that $\{y_1, y_2 \} \cup B$ dominates $X$ as each vertex of $\{x_{k+1}, \dots, x_m\}$ has positive in-degree. Thus, $\{x_1, \ldots, x_k, y_1, y_2\}\cup B$ is a total dominating set of $G_h$ of cardinality at most $m+2$, which is a contradiction. Hence, for any orientation $G_h$ of $G$, $\gamma_t(G_h) \leq m + 2$ and $\Dt(G) \leq m + 2$.

Let $G_f$ be the orientation of $G$ with arcs $(y_i,x_i) \in A(G_f)$ and $(x_i,y_j) \in A(G_f)$ for all $i \in [m]$ and $j \in [n]$ such that $i \neq j$. Let $A$ be a $\gamma_t$-set of $G_f$. Since each $x_i$ has in-degree $1$, $\{y_1, \ldots, y_m\} \subseteq A$. In addition, to dominate $Y$ we need at least two vertices from $X$ since the out-degree of each $x_i$ is $n - 1$. Thus, $|A| \geq m + 2$. Since $\{x_1, x_2, y_1, \ldots, y_m\}$ is a total dominating set, $\gamma_t(G_f) = m + 2$. Hence, $\Dt(G) \geq m + 2$. 

\end{proof}

In the next result we prove that Conjecture~\ref{conj1} holds for complete bipartite graphs.
\begin{thm}\label{CompleteBipartiteConj}
Let $2 \leq m \leq n$ and let $k$ be any integer between $4$ and $m+2$. Then there exists an orienting mapping $f$ of $K_{m,n}$ such that $\gt((K_{m,n})_f)=k$.
\end{thm}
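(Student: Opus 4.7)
The plan is to construct, for each integer $k$ with $4 \le k \le m+2$, an explicit valid orientation $f_k$ of $K_{m,n}$ whose total domination number is precisely $k$. The extreme cases $k=4$ and $k=m+2$ are already witnessed by the orientations built in the proofs of Proposition~\ref{CompleteBipartiteSmallDom} and Theorem~\ref{CompleteBipartiteBigDom}, so the real content is the intermediate range. Throughout, it is convenient to set $k' = k - 2 \in \{2,\ldots,m\}$.

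Writing $V(K_{m,n}) = X \cup Y$ with $X = \{x_1,\ldots,x_m\}$ and $Y = \{y_1,\ldots,y_n\}$, I would split $X = X_1 \cup X_2$ and $Y = Y_1 \cup Y_2$ where $X_1 = \{x_1,\ldots,x_{k'}\}$ and $Y_1 = \{y_1,\ldots,y_{k'}\}$, then plant on $X_1 \cup Y_1$ a scaled copy of the extremal orientation from Theorem~\ref{CompleteBipartiteBigDom}. Concretely, within $X_1 \cup Y_1$, I orient $y_i \to x_i$ for each $i \in [k']$ and $x_i \to y_j$ whenever $i, j \in [k']$ with $i \neq j$; from $X_1$ to $Y_2$, I orient every edge $x_i \to y_j$; from $Y_1$ to $X_2$, I orient every edge $y_j \to x_i$; and from $X_2$ to $Y_2$, I orient every edge $x_i \to y_j$. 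The crucial choice is the third group: sending every edge between $X_2$ and $Y_1$ \emph{from} $Y_1$ \emph{to} $X_2$ guarantees that $X_2$ cannot help to dominate $Y_1$. A quick in-degree check, using $k' \ge 2$, confirms that $f_k$ is a valid orientation.

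For the upper bound, I would exhibit the TD-set $Y_1 \cup \{x_1,x_2\}$ of size $k'+2=k$: the set $Y_1$ dominates $X$ because each $x_i \in X_1$ has $y_i$ as an in-neighbor and each $x_i \in X_2$ has every vertex of $Y_1$ as an in-neighbor, while $\{x_1,x_2\}$ dominates $Y$ because $N^+(x_1) = Y \setminus \{y_1\}$ and $N^+(x_2) = Y \setminus \{y_2\}$. For the lower bound, let $A$ be any TD-set of $(K_{m,n})_{f_k}$. Two observations drive the count: (a)~each $x_i \in X_1$ has $y_i$ as its unique in-neighbor, so $Y_1 \subseteq A$; and (b)~by the choice of orientation between $X_2$ and $Y_1$, a vertex $y_j \in Y_1$ can be dominated only from $X_1 \setminus \{x_j\}$. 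From (b), a single element of $X_1$ does not suffice, for whichever $x_j \in X_1$ were chosen would leave $y_j$ itself undominated; thus $|A \cap X_1| \ge 2$, and together with (a) this gives $|A| \ge k' + 2 = k$.

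The main obstacle I anticipate is pinning down observation (b) cleanly: it is precisely the orientation of the edges between $X_2$ and $Y_1$ \emph{away from} $Y_1$ that prevents $X_2$ from contributing to the domination of $Y_1$, and this is what forces the sharp lower bound. Once that point is isolated, the rest reduces to a routine check of out-neighborhoods and in-degrees.
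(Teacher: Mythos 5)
Your construction is correct and is essentially the paper's: both force $k-2$ vertices of $Y$ into every TD-set by making each the unique in-neighbor of some $x_i$, and then force two further vertices of $X$ because no single vertex of $X$ can dominate the relevant portion of $Y$. The only differences are cosmetic --- the paper gives every vertex of your $X_2$ the single in-neighbor $y_{k-2}$ rather than in-degree $|Y_1|$, and obtains the ``$+2$'' from the out-degree bound $n-1<n$ rather than from the in-neighborhoods of $Y_1$.
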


\begin{proof}
Let $G = K_{m,n}$ and write $V(G) = X \cup Y$ where $X = \{x_1, \dots, x_m\}$ and $Y= \{y_1, \dots, y_n\}$ are the partite sets of $G$.
If $k \in \{4,m+2\}$, the result follows from Theorem~\ref{CompleteBipartiteSmallDom} or Theorem~\ref{CompleteBipartiteBigDom}. Thus, let $5 \leq k \leq m+1$.  Let $G_f$ be the orientation of $G$ with arcs $(y_i,x_i) \in A(G_f)$ for all $i \in [k-3]$, $(y_{k-2}, x_i) \in A(G_f)$ for all $i \in \{k-2,k-1,\ldots , m\}$ and with all other edges directed from $X$ to $Y$. Let $A$ be a $\gamma_t$-set of $G_f$. Since each $x_i$ has in-degree $1$, $\{y_1, \ldots, y_{k-2}\} \subseteq A$. In addition, to dominate $Y$ we need at least two vertices from $X$ since the out-degree of each $x_i$ is $n - 1$. Thus, $|A| \geq k$. Since $\{x_1, x_2, y_1, \ldots, y_{k-2}\}$ is a total dominating set of $G_f$, $\gamma_t(G_f) = k$. 

\end{proof}

\section{Grid graphs}\label{sec:prod}

In this section we consider Cartesian products where both factors are paths. First, we give the exact value of $\dt(K_2\Box P_m)$ for $m \ge 3$. 
\vskip2mm
\begin{thm} For $m\ge 3$, \[\dt(K_2\Box P_m) =\begin{cases}  m& \text{ if  $m \equiv 0\pmod{4}$},\\
m+1 &\text{otherwise}.
\end{cases}
\] 
\end{thm}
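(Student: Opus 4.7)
The plan is to prove matching bounds. Write $V(K_2\cp P_m)=\{(r,j):r\in\{1,2\},\,j\in[m]\}$, and for $S\subseteq V$ let $c_j=|S\cap\{(1,j),(2,j)\}|$. First I would establish the baseline $\dt(K_2\cp P_m)\ge m$ by a degree-counting argument: in any valid orientation $G_f$ with TD-set $S$, each $s\in S$ is itself totally dominated, so $\mathrm{indeg}(s)\ge 1$ and therefore $\mathrm{outdeg}(s)\le\deg_G(s)-1\le 2$; since each of the $2m$ vertices is dominated at least once by $S$, we have $\sum_{s\in S}\mathrm{outdeg}(s)\ge 2m$, whence $|S|\ge m$.

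The heart of the proof is showing that $|S|=m$ forces $m\equiv 0\pmod 4$. Equality in the above forces $\mathrm{outdeg}(s)=2$ and $\mathrm{indeg}(s)=1$ for every $s\in S$, so $S$ contains no degree-$2$ corner. The unique in-neighbor of each $s\in S$ must lie in $S$, whence no arc is directed from $V\setminus S$ into $S$; consequently every edge between $S$ and $V\setminus S$ is oriented out of $S$, and counting forces both $|N_G(v)\cap S|=1$ for every $v\in V\setminus S$ and $|E(G[S])|=m$. Combined with Observation~\ref{tdsetcycle}, every component of $G[S]$ is then unicyclic. Because $K_2\cp P_m$ is bipartite and any three consecutive columns all with $c_j=2$ induce a subgraph of cyclomatic number $2$, the unique cycle of each component must be a $4$-cycle sitting on some pair $\{a,a+1\}$ with $c_a=c_{a+1}=2$. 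Now comes the crux, a ``no-tail'' step: $c_{a-1}=2$ would produce a second cycle in the component, while $c_{a-1}=1$ with $(r,a-1)\in S$ would give the non-$S$ vertex $(3-r,a-1)$ two $S$-neighbors, namely $(r,a-1)$ via the rung and $(3-r,a)$ from the cycle column, violating the uniqueness established above. Thus $c_{a-1}=0$ and symmetrically $c_{a+2}=0$, so each component contributes exactly four vertices to $S$, giving $m=|S|\equiv 0\pmod 4$. Hence $\dt(K_2\cp P_m)\ge m+1$ whenever $m\not\equiv 0\pmod 4$.

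It remains to furnish matching orientations. For $m=4k$, place disjoint directed $4$-cycles on the column pairs $\{4j-2,4j-1\}$ for $j=1,\ldots,k$ and direct every remaining edge outward from the nearest block; the union of these $4$-cycles is then a TD-set of size $m$. For $m\not\equiv 0\pmod 4$, write $m=4k+r$ with $r\in\{1,2,3\}$ and assemble a TD-set of size $m+1$ by case-wise templates: for $r=1$, take an enlarged first block on columns $\{2,3,4\}$ (all full) together with $k-1$ standard two-column blocks; for $r=2$, take a first block on columns $\{2,3,4,5\}$ with $c$-sequence $(2,2,1,2)$ together with $k-1$ standard blocks; for $r=3$, use $k+1$ standard two-column blocks with the first at $\{2,3\}$ and the last at $\{m-2,m-1\}$, separated by short gaps. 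In each template I would orient the blocks cyclically and choose the remaining directions so that every gap vertex and corner is dominated by the nearest block vertex; the verifications are routine. The main obstacle throughout is the structural no-tail step above, which is where the mod-$4$ phenomenon is really forced.
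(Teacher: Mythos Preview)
Your proof is correct and takes a genuinely different route from the paper's for the lower bound. The paper argues by an induction/case analysis on the set of empty columns $\{j:\{u_j,v_j\}\cap A=\emptyset\}$, splitting off the leftmost block and invoking the inductive hypothesis on the remaining ladder, with separate subcases according to whether $j_1=1$, whether $j_3=j_2+1$, and the residue of $m$. Your argument is more conceptual: the inequality $\sum_{s\in S}\mathrm{outdeg}(s)\ge 2m$ together with $\mathrm{outdeg}(s)\le 2$ immediately gives $|S|\ge m$, and then you analyze the equality case structurally, forcing $G[S]$ to be a disjoint union of $4$-cycles via the ``no-tail'' step. This is cleaner and explains \emph{why} the mod~$4$ condition appears: it comes from the rigidity of equality in a double count rather than from a case split. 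The paper's approach, on the other hand, is more self-contained in that it never needs to classify cycles in the ladder or invoke unicyclicity.

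Two minor points. Your remark that $K_2\cp P_m$ is bipartite is harmless but not really needed, since the ladder has no triangles anyway; the operative fact is that every cycle in the ladder uses both rows of each column in its range, so a cycle spanning three or more columns already forces three consecutive full columns. Also, your $r=3$ template (``first block at $\{2,3\}$, last at $\{m-2,m-1\}$'') is inconsistent when $k=0$, i.e.\ $m=3$; there you just want a single block on $\{1,2\}$ or $\{2,3\}$. These are cosmetic and do not affect correctness. The upper-bound constructions in both approaches are essentially the same idea (tile by $4$-cycle blocks and patch the residue with one enlarged block), and the ``routine verifications'' you defer are indeed routine.
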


\begin{proof} 
Label the vertices of $G = K_2 \Box P_m$ as $\{u_1, \dots, u_m, v_1, \dots, v_m\}$ such that $u_1,\dots, u_m$ induces a path, $v_1,\dots ,v_m$ induces a path, and $u_iv_i\in E(G)$ for $i \in [m]$. Let \[ p =\begin{cases}  m& \text{ if  $m \equiv 0\pmod{4}$},\\
m+1 &\text{otherwise}.
\end{cases}
\]  

First we can easily find orienting mapping $f$ of $G$ such that $\gt(G_f) \leq p$ and hence $\dt(G) \leq p$. Let $m=4k+o$ for some $o \in \{0,1,2,3\}$.  If  $o \neq 0$, let $A=\{u_i,u_{i+1},v_i,v_{i+1}: i =2+4\ell \text{ where } 0 \leq \ell < k \} \cup \{u_{4k},u_{4k+1},\ldots , u_{4k+o}\}$. If $o=0$, let $A=\{u_i,u_{i+1},v_i,v_{i+1}: i =2+4\ell \text{ where } 0 \leq \ell < k \}$. If we take any valid orientation $G_f$ in which 1) any edge $e=xy \in E(G)$ with one end vertex in $A$, say $x$, and the other end vertex in $V(G)-A$, is directed from $x$ to $y$, and 2) for fixed $0 \le \ell < k$ and $i = 2+4\ell$, $\{u_i, u_{i+1}, v_i, v_{i+1}\}$ induces an oriented cycle and if $o \ne 0$, $\{u_{4k-1}, \dots, u_{4k+o}\}$ induces a directed path, then $A$ is a total dominating set of $G_f$. Hence, $\dt(G) \leq \gt(G_f) \leq |A|=p$.

For the converse inequality, let $G_f$ be a valid orientation of $G$ and assume  that $A$ is a $\gt$-set of $G_f$. If $\{u_i, v_i\}\cap A = \emptyset$ for at most one $i \in [m]$, then $|A| \ge m+1$ as $G_f[A]$ must contain a cycle. Therefore, we let $\{j_1, \dots, j_t\}=\{i:\{u_i, v_i\} \cap A = \emptyset\}$ and we may assume $t\ge 2$. Without loss of generality we may assume $j_1 < j_2 < \ldots < j_t$. 

Suppose first that $j_1\ne1$. Note that this implies $j_1\ne 2$ for otherwise the component of $G_f[A]$ that contains $\{u_1,v_1\} \cap A$ does not contain a cycle. Since $\{u_i, v_i\}\cap A\ne \emptyset$ for any $i < j_1$ and as $G_f[\{u_i, v_i: i<j_1\}\cap A]$ must contain a cycle, it follows that $|\{u_i, v_i: i<j_1\}\cap A|\ge j_1-1+2 = j_1+1$. Moreover, $A\cap \{u_i, v_i: i>j_1\}$ is a total dominating set of $G_f[\{u_i, v_i: i > j_1\}]$ (which is a valid orientation since $u_{j_1},v_{j_1} \notin A$ and thus in $G_f$ $u_{j_1+1},v_{j_1+1}$ are dominated by a vertex in $\{u_i,v_i: i > j_1\}\cap A $). By the induction hypothesis, $|A\cap \{u_i, v_i: i>j_1\}|\ge m-j_1$ from which it follows that $|A| \ge m-j_1 + j_1+1$. Therefore, it remains to check the result for the case $j_1=1$. Since $j_1=1$, $A \cap \{u_1,v_1\} = \emptyset$ and thus $u_2,v_2 \in A$ as $A$ is a total dominating set of $G_f$. 

Assume first that $m \not\equiv 0\pmod{4}$. As above, $A \cap \{u_i, v_i\}\ne \emptyset$ when $2 \le i \le j_2-1$ and $G_f[A\cap\{u_i, v_i:2 \le i \le j_1-1\}]$ must contain a cycle so we have $|A\cap \{u_i, v_i:2 \le i \le j_2-1\}| \ge j_2-2+2 = j_2$. If $t\ge 3$, then we distinguish two cases. First, let $j_3 > j_2+1$ (i.e.\ $\{u_{j_2+1},v_{j_2+1}\} \cap A \neq \emptyset$). Then \[|A \cap \{u_i, v_i: j_2 < i < j_3\}| \ge j_3-1-j_2 + 2 = j_3-j_2+1\] by the same reasoning as above. If $j_3=m$, then it follows that $|A| \ge j_2 + j_3-j_2+1 = j_3+1 =m+1$. Otherwise, $m>j_3$ and $|A \cap \{u_i, v_i: i>j_3\}| \ge m-j_3$ by the induction hypothesis. In this case, $|A| \ge j_2 + j_3-j_2+1 + m-j_3 = m+1$. For the second case, let $j_3=j_2+1$, i.e. $A \cap \{u_{j_2},v_{j_2},u_{j_2+1},v_{j_2+1}\}= \emptyset$. Hence $A\cap \{u_i, v_i:2 \le i \le j_2-1\}$ is a total dominating set of $G_f[\{u_i, v_i:1 \le i \le j_2\}]$ and $A\cap \{u_i, v_i: i \geq j_3+1\}$ is a total dominating set of $G_f[\{u_i, v_i: i \geq  j_3\}]$. Since $m \not\equiv 0\pmod{4}$, either $j_2 \not\equiv 0\pmod{4}$ or $m-j_2 \not\equiv 0\pmod{4}$. Thus, by the induction hypothesis $|A\cap \{u_i, v_i:1 \le i \le j_2\}| > j_2$ or $|A\cap \{u_i, v_i: i \geq j_3\}| > m-j_2$ and we have  $|A| \geq j_2+m-j_2+1=m+1.$ Thus, we may assume $t=2$ and again we have $|A| \ge j_2 + m-j_2+2 = m+2$. 

Finally, assume that $m \equiv 0\pmod{4}$. As above, we may assume $|A\cap \{u_i, v_i:2 \le i \le j_2-1\}| \ge  j_2$ and by the induction hypothesis, $|A\cap \{u_i, v_i: j_2<i\le m\}|\ge m-j_2$ and we are done.

\end{proof}

We need the following lemma to present a lower bound for the lower orientable total domination number of grids.

\begin{lem}\label{l:neighborhood}
Let $3 \le m \le n$ and let $H$ be any connected subgraph of $G = P_m\Box P_n$ such that $H$ contains a cycle. Then $|N_G[V(H)]| \le 3|V(H)|$. 
\end{lem}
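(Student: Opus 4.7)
The plan is to use a simple double-counting argument based on the fact that vertices of the grid $G = P_m\Box P_n$ have degree at most $4$. Set $S = V(H)$, and let $T = N_G(S)\setminus S$, so that $|N_G[V(H)]| = |S|+|T|$. The goal reduces to proving $|T|\le 2|S|$.

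First I would observe that since every vertex of $T$ has at least one neighbor in $S$, we have $|T|\le e_G(S,T)$, where $e_G(S,T)$ denotes the number of edges of $G$ with one endpoint in $S$ and one in $T$. Next, I would sum the degrees of vertices in $S$ in two different ways:
\[
\sum_{v\in S}\deg_G(v) \;=\; 2|E(G[S])| + e_G(S,T),
\]
which is bounded above by $4|S|$ since $\Delta(G)\le 4$. Rearranging gives $e_G(S,T)\le 4|S|-2|E(G[S])|$.

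The key input is a lower bound on $|E(G[S])|$. Here I would use the hypothesis that $H$ is a connected subgraph of $G$ containing a cycle: then $H$ is not a tree, so $|E(H)|\ge |V(H)| = |S|$, and since $E(H)\subseteq E(G[S])$ we obtain $|E(G[S])|\ge |S|$. Combining with the previous inequality gives
\[
|T|\;\le\; e_G(S,T)\;\le\; 4|S| - 2|S| \;=\; 2|S|,
\]
from which $|N_G[V(H)]| = |S|+|T|\le 3|S| = 3|V(H)|$, as desired.

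The proof is essentially a one-line double count; there is no real obstacle, provided one correctly interprets $H$ as a subgraph (so its edges lie in $G[S]$) and applies the elementary fact that a connected graph with a cycle has at least as many edges as vertices. The maximum degree $4$ of the grid is used only in the trivial bound $\sum_{v\in S}\deg_G(v)\le 4|S|$, and in fact nothing specific about the grid is needed beyond $\Delta(G)\le 4$, so the same argument would give the same bound in any subcubic-plus-one graph.
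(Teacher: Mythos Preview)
Your proof is correct and takes a genuinely different route from the paper's. The paper argues by induction on the number of leaves of $G[V(H)]$: in the base case (no leaves) every vertex of $H$ has at least two neighbors inside $H$, hence at most two outside, giving the bound directly; for the inductive step a maximal pendant path is peeled off from $H$ down to a vertex of degree at least $3$, the induction hypothesis is applied to the remainder, and the contribution of the removed path is estimated vertex by vertex. Your double-counting argument bypasses all of this structure: the single inequality $e_G(S,T)\le 4|S|-2|E(G[S])|$ combined with $|E(G[S])|\ge |E(H)|\ge |S|$ (connected with a cycle) does the job in one stroke. Your approach is shorter, avoids induction, and as you note uses nothing about the grid beyond $\Delta(G)\le 4$; the paper's argument, while more hands-on, makes the geometry of how the neighborhood grows along pendant paths explicit.
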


\begin{proof}
We proceed by induction on the number of leaves in $G[V(H)]$. Note that if $G[V(H)]$ contains no leaves, then each vertex in $H$ has at least two neighbors in $H$ and therefore at most two neighbors in $G-H$. Therefore, $|N_G[V(H)]| \le 3|V(H)|$. Now assume for some $\rho \in \mathbb{N}$ that if $G[V(H)]$ contains at most $\rho-1$ leaves, then $|N_G[V(H)]| \le 3|V(H)|$. Let $H$ be such that $G[V(H)]$ contains $\rho$ leaves and let $(a, b)$ be a leaf of $H$. Let $(u, v)$ be the vertex in $H$ where $\deg_H(u, v) \ge 3$ (note that such a vertex exists as  $H$ contains a cycle and a leaf) and $d_H((u,v)(a,b))$ is minimum. Denote $(a,b)=(x_k,y_k)$ and let 
\[(u, v)(x_1, y_1), \dots, (x_k, y_k)\] be a shortest $(u,v),(a,b)$-path in $H$. Let \[H' = H - \left(\bigcup_{i=1}^k\{(x_i, y_i)\}\right).\] Since $H'$ contains at most $\rho - 1$ leaves, induction hypothesis implies that $|N_G[V(H')]| \le 3|V(H')|$. Note that $(x_1, y_1) \in N_G[V(H')]$. For any $i \in [k-1]$ it holds that $(x_i, y_i)$ has at most two neighbors in $G-H$ and $(a, b)$ has at most $3$ neighbors in $G-H$. Hence, 
\begin{eqnarray*}
|N_G[V(H)]| &\le& |N_G[V(H')] - \{(x_1, y_1)\}| + 3(k-1)+4\\
&\le& 3|V(H')| -1+3k+1\\
&=& 3(|V(H')|+k)\\
&=& 3|V(H)|.
\end{eqnarray*}

\end{proof}

\begin{thm}\label{LowerBounddom}
If $3 \leq m \leq n$, then $\dt(P_m \Box P_n) \geq \frac{mn}{3}$. 
\end{thm}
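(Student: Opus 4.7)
The plan is to use Observation~\ref{tdsetcycle} together with Lemma~\ref{l:neighborhood} via a simple covering/double-counting argument. Let $f$ be any valid orienting mapping of $G = P_m \Box P_n$, and let $S$ be a $\gt$-set of $G_f$. The key observation is that since $S$ is a total dominating set of $G_f$, every vertex $v \in V(G)$ has some in-neighbor $s \in S$ with $(s,v) \in A(G_f)$, and hence $s \in N_G(v)$. This yields the inclusion $V(G) \subseteq N_G[S]$, so $mn \le |N_G[S]|$.

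Next, I would decompose the subgraph $G[S]$ into its connected components $H_1, H_2, \ldots, H_t$. By Observation~\ref{tdsetcycle}, each $H_i$ contains a cycle, so each $H_i$ is exactly the sort of connected subgraph to which Lemma~\ref{l:neighborhood} applies. Applying that lemma gives $|N_G[V(H_i)]| \le 3|V(H_i)|$ for every $i \in [t]$.

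Finally, combining these facts yields
\[
mn \;\le\; |N_G[S]| \;=\; \Bigl|\bigcup_{i=1}^t N_G[V(H_i)]\Bigr| \;\le\; \sum_{i=1}^t |N_G[V(H_i)]| \;\le\; \sum_{i=1}^t 3|V(H_i)| \;=\; 3|S|.
\]
Since $|S| = \gt(G_f) \ge \dt(G)$ (and we can choose $f$ so that equality holds), this gives $\dt(G) \ge mn/3$, as desired. The argument is essentially immediate once Lemma~\ref{l:neighborhood} is in hand; no obstacle beyond correctly invoking the lemma on each component is anticipated, since the requirement that each component contains a cycle is exactly what Observation~\ref{tdsetcycle} supplies.
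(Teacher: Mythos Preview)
Your proposal is correct and follows essentially the same approach as the paper's own proof: invoke Observation~\ref{tdsetcycle} so that every component of $G[S]$ contains a cycle, apply Lemma~\ref{l:neighborhood} to each component, and conclude $mn \le |N_G[S]| \le 3|S|$. The only difference is that you spell out the component decomposition and the summation explicitly, whereas the paper compresses this into the single line ``$S$ dominates at most $3|S|$ vertices.''
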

\begin{proof}
Let $G=P_m \Box P_n$ and let $G_f$ be any valid orientation of $G$. Let $S$ be a $\gt$-set of $G_f$. Then it follows from Observation~\ref{tdsetcycle} that any  component of $G_f[S]$ contains a cycle. Thus, it follows from Lemma~\ref{l:neighborhood} that $S$ dominates at most $3|S|$ vertices of $G_f$. Since any total dominating set of $G_f$ must dominate all $|V(G_f)|$ vertices, it follows that  $\gt(G_f) \geq \frac{mn}{3}.$ Since this holds for any orientation $f$, we get $\dt(G) \geq \frac{mn}{3}$.
\end{proof}

The total domination number of (undirected) grids was studied in~\cite{Gravier-2002} in which  exact values of grids where one factor is small were obtained and upper and lower bounds were presented for general grids. For $16< m \le n$, the following bounds were given in~\cite{Gravier-2002} for $\gt(P_m\Box P_n)$.

\begin{thm}\cite{Gravier-2002}\label{thm:gtgrids} If $m$ and $n$ are integers greater than $16$, then  
\[\frac{3mn+2(m+n)}{12}-1 \le \gt(P_m\Box P_n) \le \left\lfloor \frac{(m+2)(n+2)}{4}\right\rfloor -4.\]
\end{thm}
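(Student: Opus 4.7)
The plan splits into the upper and lower bounds.

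For the upper bound, the goal is an explicit construction. The factor $(m+2)(n+2)/4$ suggests embedding $P_m \Box P_n$ into a padded $(m+2)\times(n+2)$ grid and using a repeating pattern of density $1/4$. A natural choice is to select pairs $\{(i,j),(i,j+1)\}$ whose positions follow a fixed residue class modulo $4$ in each coordinate, so that every vertex of the bigger grid has one of its neighbors selected and every selected vertex sits next to its partner (giving the ``total'' condition). Intersecting the pattern with $P_m\Box P_n$ and trimming the four corner contributions yields the claimed bound; I would just have to verify which residue shifts work uniformly and count how many pattern vertices lie outside the actual grid to justify the $-4$.

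For the lower bound, I would use a degree/discharging argument. The starting inequality is $\sum_{v\in S}\deg_G(v)\ge mn$, since every vertex of the grid must be dominated by some vertex of the TD-set $S$. Since interior vertices have degree $4$, edge vertices degree $3$, and the four corners degree $2$, the left side is at most $4|S|$, with the bound being slack whenever $S$ meets the boundary. To turn this slack into an additive $(m+n)/6$ correction, I would partition $V(G)$ into bricks (say $2\times 2$ or $4\times 3$ blocks) and argue that in each block touching the boundary, $S$ must contain an extra vertex beyond the amortized $1/4$ density. Alternatively, one can set up a weight function $w$ on $V(G)$ with $\sum_{u\in N(v)} w(u)\le \deg_G(v)$ and $\sum_v w(v)\ge \tfrac{3mn+2(m+n)}{12}-1$; then combining with $\sum_{v\in S}\deg_G(v)\ge mn$ and rearranging forces $|S|$ to be at least the claimed quantity.

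The main obstacle is the lower bound. The trivial degree count already gives $mn/4$, but the boundary correction term requires a careful local analysis: not every boundary vertex of $G$ forces an additional TD-vertex, and the corners create off-by-one issues that exactly match the ``$-1$'' in the bound. I expect the technically delicate step to be choosing the weights (or the block decomposition) so that (i) every interior region contributes $mn/4$ worth of budget, (ii) each of the $2(m+n)-4$ boundary vertices contributes an extra $1/6$ on average, and (iii) the four corners are handled so that the additive constant is exactly $-1$ rather than something weaker. Getting all three to line up simultaneously, while still respecting the constraint that $S$ must be a \emph{total} dominating set (so $S$ cannot be a single isolated vertex in an otherwise perfect packing), is where this kind of bound typically requires its most delicate bookkeeping, and I would plan to handle the boundary row and column by a separate case analysis before combining with the interior count.
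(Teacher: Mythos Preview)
This theorem is not proved in the paper at all: it is quoted verbatim from Gravier~\cite{Gravier-2002} as a known result, and the paper provides no argument for it. There is therefore no ``paper's own proof'' to compare your proposal against.

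Your sketch is a plausible outline of how one might attack such bounds (explicit periodic pattern for the upper bound, a degree/weight or discharging argument for the lower bound), and indeed Gravier's original paper uses constructions of roughly this flavor. But since the present paper only \emph{cites} the result and immediately uses it to observe that $\dt(P_m\Box P_n)>\gt(P_m\Box P_n)$ for large $m,n$, no proof is expected or needed here. If your goal is to reconstruct Gravier's argument, you would need to consult~\cite{Gravier-2002} directly; within the scope of this paper, the correct ``proof'' is simply the citation.
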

Therefore, by Theorems~\ref{LowerBounddom} and \ref{thm:gtgrids}, $\dt(P_m \Box P_n) > \gt(P_m\Box P_n)$ for large $m$ and $n$.  In the next result we present an upper bound for $\dt(P_m \Box P_n)$. 

\begin{thm}\label{UpperBounddom}
If $3 \leq m \leq n$ and $3 \ | \  m$, then $\dt(P_m \Box P_n) \leq \frac{mn}{3}+\frac{2m}{3}$. 
\end{thm}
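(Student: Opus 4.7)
The plan is to produce, by direct construction, a valid orientation $f$ of $G = P_m \Box P_n$ together with a total dominating set $S$ of $G_f$ of cardinality $\frac{mn}{3}+\frac{2m}{3}$; the bound $\dt(G) \le \gt(G_f) \le |S|$ then follows. Label vertices by $(i,j)$ with $i \in [m]$, $j \in [n]$, and, using $3 \mid m$, partition the rows into $m/3$ blocks of three consecutive rows, where block $k$ consists of rows $3k+1$, $3k+2$, $3k+3$ for $k = 0, \ldots, m/3 - 1$. Define
\[
S = \bigcup_{k=0}^{m/3-1} \Bigl(\{(3k+2, j) : j \in [n]\} \cup \{(3k+1, 1),\, (3k+1, 2)\}\Bigr),
\]
so $|S| = \frac{m}{3}(n+2)$, matching the desired upper bound exactly.

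For the orientation, the key step is to create a directed $4$-cycle inside each block. Within block $k$, orient the four edges spanned by $\{(3k+1,1), (3k+1,2), (3k+2,1), (3k+2,2)\}$ cyclically as $(3k+1,1) \to (3k+2,1) \to (3k+2,2) \to (3k+1,2) \to (3k+1,1)$; this both gives each of these four vertices an in-neighbor inside $S$ and produces the cycle required in $G_f[S]$ by Observation~\ref{tdsetcycle}. Next, along the middle row of block $k$, orient $(3k+2, c-1) \to (3k+2, c)$ for every $c \ge 3$, providing each remaining middle-row vertex with an in-neighbor in $S$. Finally, for every vertex $(r,c) \notin S$ -- necessarily $r \in \{3k+1, 3k+3\}$, with $c \ge 3$ when $r = 3k+1$ -- orient the vertical edge $(3k+2, c)(r, c)$ from the middle row outward, so that $(r,c)$ is dominated by $(3k+2, c) \in S$. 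All remaining edges (horizontal edges in non-middle rows beyond column $2$, and vertical edges between consecutive blocks) may be oriented arbitrarily.

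It then remains only to verify that every vertex of $G_f$ has positive in-degree and an in-neighbor in $S$, and both points are immediate from the three orientation rules just listed. Consequently $f$ is a valid orientation and $S$ is a TD-set of $G_f$, yielding $\dt(G) \le |S| = \frac{mn}{3}+\frac{2m}{3}$. The main conceptual obstacle -- and the reason the construction exceeds the $mn/3$ lower bound of Theorem~\ref{LowerBounddom} -- is Observation~\ref{tdsetcycle}: the middle row on its own is a path and contains no cycle, so some extra vertices must be added to each block to force the required short cycle in $G_f[S]$; attaching the two vertices $(3k+1,1)$ and $(3k+1,2)$ to the corner of each middle row is the cheapest way to do this, since the grid has girth $4$ and one cannot close a cycle using fewer than two extra vertices, while the middle row is still free to dominate both of the outer rows of the block.
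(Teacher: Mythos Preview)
Your construction is correct and is essentially the same as the paper's: both partition the rows into $m/3$ blocks of three, take the middle row of each block as the backbone of $S$, and attach two extra vertices in an adjacent row to close a directed $4$-cycle, arriving at the same set of size $\frac{m}{3}(n+2)$. The only cosmetic differences are that the paper places the two extra vertices in row $3k+3$ rather than $3k+1$, spells out the orientation edge-by-edge (rules (i)--(ix)), and additionally argues that $S$ is in fact a $\gamma_t$-set of the chosen orientation---an observation not needed for the upper bound on $\dt$.
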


\begin{proof}
For $P_m = v_1 v_2 \dots v_m$ and $P_n = w_1 w_2 \dots w_n$, let $\{u_{i,j}:1\leq i \leq m, 1 \leq j \leq n \}$ be the vertices of $G=P_m \Box P_n$. \\

\noindent Consider the following orientation $f$ of $G$: 
(i) $(u_{i,j}, u_{i,j+1})$ for each $i \equiv 1 \pmod{3}$ where $1 \leq j \leq n-1$,\\
(ii) $(u_{i,j}, u_{i,j+1})$ for each $i \equiv 2\pmod{3}$ where $2 \leq j \leq n-1$,\\
(iii) $(u_{i,2}, u_{i,1})$ for each $i \equiv 2 \pmod{3}$,\\
(iv) $(u_{i,j}, u_{i,j+1})$ for each $i \equiv 0\pmod{3}$ where $1 \leq j \leq n-2$,\\
(v) $(u_{i,n}, u_{i,n-1})$ for each $i \equiv 0\pmod{3}$,\\
(vi) $(u_{i+1,j}, u_{i,j})$ for each $i \equiv 1 \pmod{3}$ where $1 \leq j \leq n$,\\
(vii) $(u_{i,j}, u_{i+1,j})$ for each $i \equiv 2 \pmod{3}$ where $j=1$ and $3 \leq j \leq n$,\\
(viii) $(u_{i+1,2}, u_{i,2})$ for each $i \equiv 2 \pmod{3}$,\\
(ix) $(u_{i,j}, u_{i+1,j})$ for each $i \equiv 0 \pmod{3}$ where $1 \leq j \leq n$.\\

This orientation is illustrated for $P_6 \Box P_8$ in Figure \ref{domP_6P_8}. Then let $S = \{ u_{i,j} \ :\  i \equiv 2 \pmod{3} \} \cup \{ u_{i,1}, u_{i,2}\  :\  i \equiv 0\pmod{3} \}$.  Note that $S$ will be $\frac{1}{3} m$ disjoint copies of a 4-cycle with a long path extending from one vertex.  We claim that $S$ is a $\gt$-set of $G_f$.  First, each $u_{i,j}$ with $i \equiv 2 \pmod{3}$ and $2 \leq j \leq n-1$ is the unique in-neighbor of the corresponding $u_{i,j+1}$  and must be in any $\gamma_t$-set of $G_f$.  Furthermore, for each $i \equiv 2 \pmod{3}$, $u_{i,n}$ is the unique in-neighbor of $u_{i+1,n}$ and must be in any $\gamma_t$-set of $G_f$ while $u_{i,2} , u_{i,1} , u_{i+1,1} , u_{i+1,2}$ forms a directed cycle of unique in-neighbors and must be in any $\gamma_t$-set of $G_f$.  So each vertex of $S$ is the unique in-neighbor of some vertex of $G_f$. That each vertex of $G_f$ is dominated by $S$, can be verified using the orientation given above.  Thus $S$ is a $\gamma_t$-set of $G_f$, and since $|S| = ( n + 2 ) ( \frac{1}{3} m ) =  \frac{mn}{3}+\frac{2m}{3}$, $\dt(P_m \Box P_n) \leq \frac{mn}{3}+\frac{2m}{3}$.

\end{proof}

\begin{figure}
\begin{center}
	\begin{tikzpicture}[]

	\draw [->, line width=1pt] (1,6) -- (1.8, 6);
	\draw [->, line width=1pt] (2,6) -- (2.8, 6);
	\draw [->, line width=1pt] (3,6) -- (3.8, 6);
	\draw [->, line width=1pt] (4,6) -- (4.8, 6);
	\draw [->, line width=1pt] (5,6) -- (5.8, 6);
	\draw [->, line width=1pt] (6,6) -- (6.8, 6);
	\draw [->, line width=1pt] (7,6) -- (7.8, 6);
	
	\draw [->, line width=1pt] (2,5) -- (1.2, 5);
	\draw [->, line width=1pt] (2,5) -- (2.8, 5);
	\draw [->, line width=1pt] (3,5) -- (3.8, 5);
	\draw [->, line width=1pt] (4,5) -- (4.8, 5);
	\draw [->, line width=1pt] (5,5) -- (5.8, 5);
	\draw [->, line width=1pt] (6,5) -- (6.8, 5);
	\draw [->, line width=1pt] (7,5) -- (7.8, 5);	
	
	\draw [->, line width=1pt] (1,4) -- (1.8, 4);
	\draw [->, line width=1pt] (2,4) -- (2.8, 4);
	\draw [->, line width=1pt] (3,4) -- (3.8, 4);
	\draw [->, line width=1pt] (4,4) -- (4.8, 4);
	\draw [->, line width=1pt] (5,4) -- (5.8, 4);
	\draw [->, line width=1pt] (6,4) -- (6.8, 4);
	\draw [->, line width=1pt] (8,4) -- (7.2, 4);
	
	\draw [->, line width=1pt] (1,3) -- (1.8, 3);
	\draw [->, line width=1pt] (2,3) -- (2.8, 3);
	\draw [->, line width=1pt] (3,3) -- (3.8, 3);
	\draw [->, line width=1pt] (4,3) -- (4.8, 3);
	\draw [->, line width=1pt] (5,3) -- (5.8, 3);
	\draw [->, line width=1pt] (6,3) -- (6.8, 3);
	\draw [->, line width=1pt] (7,3) -- (7.8, 3);
	
	\draw [->, line width=1pt] (2,2) -- (1.2, 2);
	\draw [->, line width=1pt] (2,2) -- (2.8, 2);
	\draw [->, line width=1pt] (3,2) -- (3.8, 2);
	\draw [->, line width=1pt] (4,2) -- (4.8, 2);
	\draw [->, line width=1pt] (5,2) -- (5.8, 2);
	\draw [->, line width=1pt] (6,2) -- (6.8, 2);
	\draw [->, line width=1pt] (7,2) -- (7.8, 2);
	
	\draw [->, line width=1pt] (1,1) -- (1.8, 1);
	\draw [->, line width=1pt] (2,1) -- (2.8, 1);
	\draw [->, line width=1pt] (3,1) -- (3.8, 1);
	\draw [->, line width=1pt] (4,1) -- (4.8, 1);
	\draw [->, line width=1pt] (5,1) -- (5.8, 1);
	\draw [->, line width=1pt] (6,1) -- (6.8, 1);
	\draw [->, line width=1pt] (8,1) -- (7.2, 1);
	
	
	\draw [->, line width=1pt] (1,5) -- (1, 5.8);
	\draw [->, line width=1pt] (2,5) -- (2, 5.8);
	\draw [->, line width=1pt] (3,5) -- (3, 5.8);
	\draw [->, line width=1pt] (4,5) -- (4, 5.8);
	\draw [->, line width=1pt] (5,5) -- (5, 5.8);
	\draw [->, line width=1pt] (6,5) -- (6, 5.8);
	\draw [->, line width=1pt] (7,5) -- (7, 5.8);
	\draw [->, line width=1pt] (8,5) -- (8, 5.8);
	
	\draw [->, line width=1pt] (1,5) -- (1, 4.2);
	\draw [->, line width=1pt] (2,4) -- (2, 4.8);
	\draw [->, line width=1pt] (3,5) -- (3, 4.2);
	\draw [->, line width=1pt] (4,5) -- (4, 4.2);
	\draw [->, line width=1pt] (5,5) -- (5, 4.2);
	\draw [->, line width=1pt] (6,5) -- (6, 4.2);
	\draw [->, line width=1pt] (7,5) -- (7, 4.2);
	\draw [->, line width=1pt] (8,5) -- (8, 4.2);
	
	\draw [->, line width=1pt] (1,4) -- (1, 3.2);
	\draw [->, line width=1pt] (2,4) -- (2, 3.2);
	\draw [->, line width=1pt] (3,4) -- (3, 3.2);
	\draw [->, line width=1pt] (4,4) -- (4, 3.2);
	\draw [->, line width=1pt] (5,4) -- (5, 3.2);
	\draw [->, line width=1pt] (6,4) -- (6, 3.2);
	\draw [->, line width=1pt] (7,4) -- (7, 3.2);
	\draw [->, line width=1pt] (8,4) -- (8, 3.2);

	\draw [->, line width=1pt] (1,2) -- (1, 2.8);
	\draw [->, line width=1pt] (2,2) -- (2, 2.8);
	\draw [->, line width=1pt] (3,2) -- (3, 2.8);
	\draw [->, line width=1pt] (4,2) -- (4, 2.8);
	\draw [->, line width=1pt] (5,2) -- (5, 2.8);
	\draw [->, line width=1pt] (6,2) -- (6, 2.8);
	\draw [->, line width=1pt] (7,2) -- (7, 2.8);
	\draw [->, line width=1pt] (8,2) -- (8, 2.8);
	
	\draw [->, line width=1pt] (1,2) -- (1, 1.2);
	\draw [->, line width=1pt] (2,1) -- (2, 1.8);
	\draw [->, line width=1pt] (3,2) -- (3, 1.2);
	\draw [->, line width=1pt] (4,2) -- (4, 1.2);
	\draw [->, line width=1pt] (5,2) -- (5, 1.2);
	\draw [->, line width=1pt] (6,2) -- (6, 1.2);
	\draw [->, line width=1pt] (7,2) -- (7, 1.2);
	\draw [->, line width=1pt] (8,2) -- (8, 1.2);


	\draw [fill=gray] (1, 1) circle (5pt);
	\draw [fill=gray] (2, 1) circle (5pt);
	\draw [fill=white] (3, 1) circle (5pt);
	\draw [fill=white] (4, 1) circle (5pt);
	\draw [fill=white] (5, 1) circle (5pt);
	\draw [fill=white] (6, 1) circle (5pt);
	\draw [fill=white] (7, 1) circle (5pt);
	\draw [fill=white] (8, 1) circle (5pt);

	\draw [fill=gray] (1, 2) circle (5pt);
	\draw [fill=gray] (2, 2) circle (5pt);
	\draw [fill=gray] (3, 2) circle (5pt);
	\draw [fill=gray] (4, 2) circle (5pt);
	\draw [fill=gray] (5, 2) circle (5pt);	
	\draw [fill=gray] (6, 2) circle (5pt);
	\draw [fill=gray] (7, 2) circle (5pt);
	\draw [fill=gray] (8, 2) circle (5pt);
	
	\draw [fill=white] (1, 3) circle (5pt);
	\draw [fill=white] (2, 3) circle (5pt);
	\draw [fill=white] (3, 3) circle (5pt);
	\draw [fill=white] (4, 3) circle (5pt);
	\draw [fill=white] (5, 3) circle (5pt);	
	\draw [fill=white] (6, 3) circle (5pt);
	\draw [fill=white] (7, 3) circle (5pt);
	\draw [fill=white] (8, 3) circle (5pt);
	
	\draw [fill=gray] (1, 4) circle (5pt);
	\draw [fill=gray] (2, 4) circle (5pt);
	\draw [fill=white] (3, 4) circle (5pt);
	\draw [fill=white] (4, 4) circle (5pt);
	\draw [fill=white] (5, 4) circle (5pt);	
	\draw [fill=white] (6, 4) circle (5pt);
	\draw [fill=white] (7, 4) circle (5pt);
	\draw [fill=white] (8, 4) circle (5pt);
	
	\draw [fill=gray] (1, 5) circle (5pt);
	\draw [fill=gray] (2, 5) circle (5pt);
	\draw [fill=gray] (3, 5) circle (5pt);
	\draw [fill=gray] (4, 5) circle (5pt);
	\draw [fill=gray] (5, 5) circle (5pt);	
	\draw [fill=gray] (6, 5) circle (5pt);
	\draw [fill=gray] (7, 5) circle (5pt);
	\draw [fill=gray] (8, 5) circle (5pt);
	
	\draw [fill=white] (1, 6) circle (5pt);
	\draw [fill=white] (2, 6) circle (5pt);
	\draw [fill=white] (3, 6) circle (5pt);
	\draw [fill=white] (4, 6) circle (5pt);
	\draw [fill=white] (5, 6) circle (5pt);	
	\draw [fill=white] (6, 6) circle (5pt);
	\draw [fill=white] (7, 6) circle (5pt);
	\draw [fill=white] (8, 6) circle (5pt);
	
	\draw (0.2,6) node{$u_{1,1}$}; 
	\draw (0.2,1) node{$u_{6,1}$}; 
	\draw (8.8,6) node{$u_{1,8}$}; 
	\draw (8.8,1) node{$u_{6,8}$}; 
	
	\end{tikzpicture} \\
	\caption{Orientation of $P_6 \Box P_8$ with  $\gt$-set $S$ indicated in gray.}\label{domP_6P_8}
	
\end{center}
\end{figure}

\begin{cor}\label{UpperBounddomCor1}
If $3 \leq m \leq n$ and $m \equiv 2\pmod{3} $, then $\dt(P_m \Box P_n) \leq \frac{(m+1)n}{3}+\frac{2(m+1)}{3}$. 
\end{cor}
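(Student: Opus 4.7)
The plan is to derive the bound directly from Theorem~\ref{UpperBounddom} applied to $P_{m+1}\Box P_n$, noting that $m \equiv 2 \pmod{3}$ forces $3 \mid (m+1)$. That theorem supplies an orientation $f$ of $P_{m+1}\Box P_n$ together with a total dominating set
$$S = \{u_{i,j} : i \equiv 2 \pmod{3}\} \cup \{u_{i,1}, u_{i,2} : i \equiv 0 \pmod{3}\}$$
of cardinality exactly $\frac{(m+1)n}{3}+\frac{2(m+1)}{3}$.

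The key observation is that no vertex of $S$ lies in row~$1$, because row~$1$ is congruent to $1 \pmod{3}$, which is neither of the two residue classes appearing in $S$. Accordingly, I would let $H$ be the subgraph of $P_{m+1}\Box P_n$ induced by rows $2,3,\ldots,m+1$; this $H$ is isomorphic to $P_m\Box P_n$, and $S\subseteq V(H)$. Let $f'$ denote the restriction of $f$ to $H$. It then remains to check that $(H,f')$ is a valid orientation of $P_m\Box P_n$ and that $S$ is still a total dominating set of $f'$.

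For validity, I would scan the nine arc types defining $f$ and observe that the only arcs joining row~$1$ to row~$2$ are those of type~(vi) at $i=1$, namely $(u_{2,j},u_{1,j})$ for $1\le j\le n$; deleting row~$1$ therefore strips row~$2$ only of \emph{out}-arcs, leaving the in-degree of every vertex of $H$ unchanged, and no in-degrees elsewhere in $H$ are affected. For total domination, the same observation shows that every vertex of $H$ retains in $f'$ exactly the same in-neighbors it had in $f$; since $S$ dominated every vertex of $H$ in $f$, it continues to dominate all of $H$ in $f'$. This yields $\dt(P_m\Box P_n) \le |S| = \frac{(m+1)n}{3}+\frac{2(m+1)}{3}$.

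The main (and essentially only) obstacle is the routine case-check across arc-types (i)--(ix) confirming that no in-degree in $H$ drops to zero when row~$1$ is removed; this reduces to the single point above that the type-(vi) arcs at $i=1$ are directed into row~$1$, not away from it, so no vertex of $H$ relied on row~$1$ as an in-neighbor.
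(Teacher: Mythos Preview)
Your proposal is correct and follows exactly the approach of the paper, which states only that ``this result follows from removing the first row $\{u_{1,1},\dots,u_{1,n}\}$ from the above orientation.'' You have supplied the verification the paper omits---namely that row~1 meets neither component of $S$ and that the only row-1/row-2 arcs are the type-(vi) arcs $(u_{2,j},u_{1,j})$, so deleting row~1 removes no in-arcs from vertices of $H$ and hence preserves both validity and total domination by $S$.
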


\begin{proof}
This result follows from removing the first row $\{u_{1,1}, u_{1,2}, \dots , u_{1,n} \}$ from the above orientation.
\end{proof}

\begin{cor}\label{UpperBounddomCor2}
If $3 \leq m \leq n$ and $m \equiv 1 \pmod{3} $, then $\dt(P_m \Box P_n) \leq \frac{(m+2)n}{3}+\frac{2(m+2)}{3}$. 
\end{cor}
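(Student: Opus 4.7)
The plan is to extend the construction of Theorem~\ref{UpperBounddom} by appending a single new row. Since $m \equiv 1 \pmod{3}$ gives $m-1 \equiv 0 \pmod{3}$, Theorem~\ref{UpperBounddom} applies to $P_{m-1}\Box P_n$ and supplies an orientation $f$ of $P_{m-1}\Box P_n$ together with a TD-set $S$ of size $\frac{(m-1)(n+2)}{3}$; because row $m-1$ is a row of type $\equiv 0 \pmod{3}$ in that construction, in particular $u_{m-1,1}$ and $u_{m-1,2}$ both lie in $S$.

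To adapt $f$ to an orientation $f'$ of $P_m\Box P_n$, I would append row $m$ with its horizontal edges oriented so as to mimic a $\equiv 2 \pmod{3}$ row of the theorem, namely $(u_{m,2}, u_{m,1})$ together with $(u_{m,j}, u_{m,j+1})$ for each $2 \leq j \leq n-1$, and I would orient every new vertical edge downward, i.e., $(u_{m-1,j}, u_{m,j})$ for all $j\in[n]$. Then I would take $S' = S \cup \{u_{m,1},\ldots,u_{m,n}\}$ as the candidate TD-set; a quick computation gives
\[|S'| = \frac{(m-1)(n+2)}{3} + n = \frac{(m+2)(n+2)}{3} - 2 \leq \frac{(m+2)n}{3} + \frac{2(m+2)}{3},\]
so the desired bound will follow once $S'$ is shown to be a TD-set of $(P_m\Box P_n)_{f'}$.

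The verification splits into two parts. Every vertex in rows $1$ through $m-1$ retains all of its in-neighbors under $f'$ (the newly added arcs are out-arcs from row $m-1$), hence is dominated by $S$ exactly as in Theorem~\ref{UpperBounddom}. For row $m$, the vertex $u_{m,1}$ is dominated by $u_{m,2}\in S'$ and each $u_{m,j}$ with $j\geq 3$ is dominated by $u_{m,j-1}\in S'$. The only case that truly leans on the setup is $u_{m,2}$: in the chosen orientation of row $m$ it has no horizontal in-neighbor, so its domination must come vertically from $u_{m-1,2}$, which is exactly why it matters that $u_{m-1,2}$ already lies in $S$---an artifact of row $m-1$ being of type $\equiv 0 \pmod{3}$. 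Articulating this critical case carefully will be the main point of the argument; everything else is routine.
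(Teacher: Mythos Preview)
Your argument is correct. The only subtlety you should make explicit is that Theorem~\ref{UpperBounddom} can indeed be invoked: since $m\equiv 1\pmod 3$ and $m\ge 3$ force $m\ge 4$, you have $3\le m-1\le n$ and $3\mid (m-1)$, so the hypothesis of the theorem is met for $P_{m-1}\Box P_n$. With that said, the verification you outline goes through exactly as written, and your displayed count $|S'|=\frac{(m+2)(n+2)}{3}-2$ is accurate.

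Your route, however, is genuinely different from the paper's. The paper derives the corollary by \emph{deleting} two rows from the construction for $P_{m+2}\Box P_n$ (the first and last rows of that larger grid), then reorienting one vertical edge near the bottom and adjoining two compensating vertices to the dominating set; you instead \emph{append} a single new row to the construction for $P_{m-1}\Box P_n$ and absorb it wholesale into the dominating set. Your approach is cleaner in two respects: it requires no reorientation at all, and it actually yields a dominating set that is two vertices smaller than the paper's, i.e.\ you prove $\dt(P_m\Box P_n)\le \frac{(m+2)(n+2)}{3}-2$ rather than merely $\frac{(m+2)(n+2)}{3}$. The paper's deletion approach has the minor advantage of being uniform with the proof of Corollary~\ref{UpperBounddomCor1} (delete one row there, two rows here), but your additive construction is both simpler to verify and slightly sharper.
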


\begin{proof}
This result follows from removing the paths corresponding to $\{u_{1,1}, u_{1,2}, \dots $ $, u_{1,n} \}$ and $\{u_{m,1}, u_{m,2}, \dots , u_{m,n} \}$ from the above orientation, reorienting the edge between $u_{m-1, 2}$ and $u_{m-2,2}$, and adding vertices $u_{m-2, 1}$ and $u_{m-2,2}$ to the set $S$.  
\end{proof}

Next we will provide upper and lower bounds on the upper orientable total domination number of the ladder, $K_2 \Box P_m$. In order to do so, we first consider $K_2\Box P_3$.

\begin{lem}\label{P3xP2} 
Let $f$ be any valid orientation of a graph $G=K_2\Box P_3$. Then there exists a vertex in $G_f$ with out-degree at least 2.  
\end{lem}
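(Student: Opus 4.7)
The plan is to prove this by a simple edge-counting / pigeonhole argument, with no real dependence on the validity of the orientation.

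First, I would compute the number of edges of $G=K_2\cp P_3$. The graph has $|V(K_2)|\cdot|E(P_3)|+|V(P_3)|\cdot|E(K_2)|=2\cdot 2+3\cdot 1=7$ edges and $|V(G)|=6$ vertices. Then, for any orientation $f$ (valid or not), the sum of out-degrees equals the number of arcs, so
\[
\sum_{v\in V(G)}|N^+_{G_f}(v)|=|A(G_f)|=|E(G)|=7.
\]
Since there are only $6$ vertices, by pigeonhole at least one vertex $v$ must satisfy $|N^+_{G_f}(v)|\geq \lceil 7/6\rceil=2$, which is the desired conclusion.

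The main (and essentially only) step is thus the edge count; there is no real obstacle. The validity hypothesis in the statement is not actually needed for the conclusion — it is presumably included because the lemma will later be applied inside an argument about valid orientations — so I would just mention this in passing and give the one-line counting proof. If the authors prefer a slightly more structural presentation (e.g. to fit the style of later lemmas), the same idea can be phrased as: the maximum degree of $G$ is $3$, each vertex has out-degree at most $\deg_G(v)$, and $\sum_v\deg_G(v)=14$ splits as in-degrees plus out-degrees summing to $7$ each on average, forcing some vertex to have out-degree $\geq 2$.
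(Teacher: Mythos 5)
Your proof is correct, and it is genuinely different from (and shorter than) the one in the paper. You count: $K_2\cp P_3$ has $6$ vertices and $7$ edges, the out-degrees sum to the number of arcs, and $7>6$ forces some vertex to have out-degree at least $2$. The paper instead argues by contradiction with a forcing chain: assuming every out-degree is at most $1$, it orients $u_1v_1$ without loss of generality as $(u_1,v_1)$, then successively deduces the orientation of every remaining edge (each vertex that already has an out-neighbor must receive all its other edges), until the last vertex $v_3$ is forced to have two out-neighbors. Both arguments are valid, and you are right that neither actually needs the validity hypothesis --- the conclusion holds for an arbitrary orientation. Your pigeonhole argument is the cleaner and more robust one: it generalizes immediately to any graph with more edges than vertices, whereas the paper's case-by-case propagation is tied to the specific structure of $K_2\cp P_3$ and would be tedious to adapt. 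The only thing the paper's style buys is that it exhibits concretely how the orientation gets forced, which is in keeping with the hands-on constructions elsewhere in Section 4, but nothing later in the paper depends on that extra information.
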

\begin{proof}
Let $G=K_2\Box P_3$ and write  $V(G)=\{u_1,u_2,u_3,v_1,v_2,v_3\}$, where $\{u_1,u_2,u_3\}$ induces a path, $\{v_1,v_2,v_3\}$ induces a path and $u_iv_j \in E(G)$ if and only if $i=j$. Let $f$ be any valid orientation of $G$ and assume that each vertex of $G_f$ has out-degree at most 1. Without loss of generality let $(u_1,v_1) \in A(G_f)$. Since $u_1$ has out-degree at most one, $(u_2,u_1) \in A(G_f)$ and thus $(v_2,u_2),(u_3,u_2) \in A(G_f)$. Again since also $v_2$ and $u_3$ have out-degree at most 1, $(v_3,v_2), (v_3,u_3) \in A(G_f)$. Hence $v_3$ has two out-neighbors $v_2$ and $u_3$, a contradiction.
\end{proof}

\begin{thm}\label{UpperBoundDOM}
Let $3 \leq m$ and let $G=K_2 \Box P_m$. Then $\Dt(K_2 \Box P_m) \leq  \lceil \frac{5}{6}n(G) \rceil$. 

\end{thm}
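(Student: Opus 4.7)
The plan is to prove the bound $\Dt(K_2\Box P_m)\leq\lceil 5m/3\rceil$ by induction on $m$, leveraging Lemma~\ref{P3xP2} throughout and using the fact that $\lceil 5m/3\rceil-\lceil 5(m-3)/3\rceil=5$, so the induction step size $3$ matches the natural ``$5$ vertices per $3$ columns'' target.

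For the base cases $m\in\{3,4,5\}$, I would use a direct counting argument. For $m=3$: $|V|=6$ and $|E|=7$, so in any valid orientation $G_f$ the sum of in-degrees is $7$. Since each in-degree is at least $1$, exactly five vertices have in-degree $1$ and one has in-degree $2$. Hence at most five distinct vertices are ``locked'' (i.e., the unique in-neighbor of some vertex), so some vertex $w$ is not locked, and $V(G)\setminus\{w\}$ is a TD-set of size $5=\lceil 5\cdot 3/3\rceil$. The same style of argument, using $|E(K_2\Box P_m)|=3m-2$ and sum-of-indegrees counting, handles $m=4$ (at least one non-locked vertex) and $m=5$ (at least two non-locked vertices).

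For the inductive step ($m\geq 6$) and a valid orientation $G_f$ of $G=K_2\Box P_m$, I would decompose $G$ along the cut between columns $3$ and $4$ into $H_1\cong K_2\Box P_3$ (columns $1,2,3$) and $H_2\cong K_2\Box P_{m-3}$ (columns $4,\ldots,m$). In the generic case where the restrictions of $f$ to $H_1$ and to $H_2$ are both valid orientations, I apply Lemma~\ref{P3xP2} (or the base-case counting) to $H_1$ to obtain a removable vertex $w_1\in V(H_1)$, and I apply the induction hypothesis to $H_2$ to get a TD-set $S_2$ of $(H_2)_f$ with $|S_2|\leq\lceil 5(m-3)/3\rceil$. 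Taking $S=(V(H_1)\setminus\{w_1\})\cup S_2$ then yields a TD-set of $G_f$ of size at most $5+\lceil 5(m-3)/3\rceil=\lceil 5m/3\rceil$.

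The main obstacle is the boundary between columns $3$ and $4$: the restriction of $f$ to $H_1$ (or to $H_2$) may fail to be valid when a vertex in column $3$ has its only in-neighbor in column $4$ (or vice versa), and even when these restrictions are valid, the vertex $w_1$ could still be the unique in-neighbor (in $G_f$) of a vertex in column $4$. My plan is to perform a short case analysis on the four possible orientations of the two edges between columns $3$ and $4$. In each case I would either (a) shift the partition one column, absorbing the problematic boundary vertex into the appropriate subgraph so that both restrictions become valid, or (b) include one extra boundary vertex in $S$ while excluding an additional vertex elsewhere in the first block, preserving the total $|S|\leq\lceil 5m/3\rceil$. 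Verifying that this local adjustment always succeeds, and that Lemma~\ref{P3xP2} can be invoked on the possibly shifted $K_2\Box P_3$ block, is the technical heart of the argument.
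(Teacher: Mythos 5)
Your base cases and your ``generic'' inductive step are sound, and your underlying idea---harvest a savings of one vertex from each $K_2\Box P_3$ block via Lemma~\ref{P3xP2}---is exactly the engine of the paper's proof. The gap is in the inductive step, and your proposed repairs do not close it. The induction hypothesis is a statement about $\Dt(K_2\Box P_{m-3})$, i.e., about \emph{valid} orientations of $H_2$; when a column-$4$ vertex has its unique in-neighbor in column $3$, the restriction of $f$ to $H_2$ is not a valid orientation, $\gt((H_2)_f)$ is undefined, and there is nothing to apply the hypothesis to. Your fix (a), shifting the cut, fails in general: orient all top-row edges as $(u_j,u_{j+1})$, all bottom-row edges as $(v_{j+1},v_j)$, and the rungs as $(v_1,u_1)$ and $(u_j,v_j)$ for $j\ge 2$. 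This is a valid orientation of $G$, yet for \emph{every} cut position $j$ one has $N^-(u_{j+1})=\{u_j\}$, so the right-hand piece is invalid no matter where you cut. Your fix (b), adding a boundary vertex to $S$, dominates the offending vertex but still does not produce a valid orientation of $H_2$ on which to invoke the hypothesis; and compensating by deleting a second vertex from the first block is not justified, since two non-locked vertices cannot in general be removed simultaneously (a vertex may have exactly those two as its in-neighbors---this is also why your parenthetical for $m=5$ would not help, though there only one removal is needed).

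The paper sidesteps all of this by not inducting at all. It partitions $V(G)$ into $k=\lfloor m/3\rfloor$ blocks isomorphic to $K_2\Box P_3$ plus a remainder of $2o$ vertices, and observes that each block contains a vertex of out-degree at least $2$ \emph{within the block} whether or not the restricted orientation is valid: by Lemma~\ref{P3xP2} if it is valid, and otherwise because a vertex of in-degree $0$ in the block has all of its ($\ge 2$) block-edges oriented outward. Taking these $k$ vertices, which dominate at least $2k$ vertices, and then adding an arbitrary in-neighbor \emph{in $G_f$} (not confined to any block) for each of the at most $2m-2k$ remaining vertices yields a TD-set of size at most $k+(2m-2k)=5k+2o=\lceil \frac{5}{6}n(G)\rceil$. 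If you wish to keep the inductive framework, you would need to strengthen the induction statement to cover orientations of $K_2\Box P_{m-3}$ in which boundary vertices may have in-degree $0$ but out-degree at least $2$; as written, the step from $m-3$ to $m$ does not go through.
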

\begin{proof}
Let $G=K_2 \Box P_m$, $m=3k+o$ and let $f$ be a valid orientation of $G$ such that $\Dt(G)=\gt(G_f)$. Write $V(G)=\{u_1,\ldots ,u_m,v_1,\ldots , v_m\}$, where $\{u_1,\ldots , u_m\}$ induces a path, $\{v_1,\ldots , v_m\}$ induces a path and $u_iv_j \in E(G)$ if and only if $i=j$. Partition $V(G)$ into sets $A_i=\{u_{3i+1}, u_{3i+2},u_{3i+3},v_{3i+1},v_{3i+2},v_{3i+3}\}$ for $i \in \{0,\ldots , k-1\}$ and $A_k=V(G) \setminus (A_0 \cup \cdots \cup A_{k-1})$. Moreover, for $i \in \{0,1,\ldots , k\}$ let $(H_i)_f$ be the subgraph of $G_f$ induced by $A_i$. If the orientation $(H_i)_f$ is not valid, then there is a vertex $v$ in $(H_i)_f$ with in-degree 0. Since $\delta(G)=2$, $v$ has out-degree at least 2. Thus it follows from Lemma~\ref{P3xP2} that $(H_i)_f$ has a vertex $v_i$ with out-degree at least 2 for any $i \in \{0,1,\ldots , k-1\}$. Now, we construct a total dominating set $S$ of $G_f$ in the following way. Let $\{v_0,v_1,\ldots , v_{k-1}\} \subseteq S$. These vertices dominate at least $2k$ vertices in $G_f$. For each of the remaining (possibly undominated) $2m-2k=4k+2o$ vertices, choose an arbitrary in-neighbor in $G_f$ and add it to $S$. Thus $S$ is a total dominating set of cardinality $|S|=k+4k+2o=5k+2o=\lceil \frac{5}{6}(6k+2o) \rceil$. Hence $\Dt(G)=\gt(G_f)\leq |S|\leq \lceil \frac{5}{6}n(G) \rceil$.  
 
\end{proof}

\begin{thm}\label{LowerBoundLadder}
If $3 \leq m$ and $G=K_2 \Box P_m$, then $\Dt(K_2 \Box P_m) \geq  \lfloor \frac{3}{2}m \rfloor +1$. 
\end{thm}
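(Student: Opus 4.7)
The plan is to exhibit an explicit valid orientation $f$ of $G=K_2\Box P_m$ and argue directly that every total dominating set of $G_f$ has size at least $\lfloor 3m/2\rfloor+1$. Label $V(G)=\{u_1,\ldots,u_m,v_1,\ldots,v_m\}$ as in the previous theorem. I will define $f$ by the arcs $(u_i,u_{i+1})$ and $(v_{i+1},v_i)$ for $1\le i\le m-1$, together with the boundary rungs $(v_1,u_1)$ and $(u_m,v_m)$, and for each internal column $2\le i\le m-1$ the rung arc $(v_i,u_i)$ when $i$ is odd and $(u_i,v_i)$ when $i$ is even. This orientation is valid: the top row is directed rightward, the bottom row leftward, and the boundary rungs supply the in-edges that $u_1$ and $v_m$ need, so every vertex has in-degree at least one.

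Let $S$ be an arbitrary total dominating set of $G_f$. The crux is that in this orientation a large collection of vertices has in-degree exactly one, so their unique in-neighbors must lie in $S$. Specifically, $v_1$ is the unique in-neighbor of $u_1$, $v_2$ of $v_1$, $u_m$ of $v_m$, and $u_{m-1}$ of $u_m$; and for each internal column $2\le i\le m-1$, the unique in-neighbor of $u_i$ is $u_{i-1}$ when the rung points $u_i\to v_i$ (i.e.\ $i$ even), while the unique in-neighbor of $v_i$ is $v_{i+1}$ when the rung points $v_i\to u_i$ (i.e.\ $i$ odd). A direct check shows that these $m+2$ forced vertices are pairwise distinct; call the resulting set $F$. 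Thus $F\subseteq S$, giving $|S|\ge m+2$.

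The second step is to show that $F$ alone does not dominate $G_f$. Call an index $i$ a \emph{transition} if $i$ is odd and $3\le i\le m-1$; there are exactly $\lfloor m/2\rfloor-1$ such indices. For any transition $i$, both $u_i$ and $v_{i-1}$ already belong to $F$ (they were forced by $u_{i+1}$ and $v_{i-2}$ respectively), yet their in-neighbor sets in $G_f$ are both equal to $\{u_{i-1},v_i\}$, and a case check against the explicit list of $F$ shows that neither $u_{i-1}$ nor $v_i$ lies in $F$. Hence $S$ must contain at least one vertex from the pair $\{u_{i-1},v_i\}$ for each such transition.

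Finally, the pairs $\{u_{i-1},v_i\}$ for distinct transitions $i$ are pairwise disjoint (the indices differ) and are disjoint from $F$, so together they contribute $\lfloor m/2\rfloor-1$ additional elements of $S$ beyond $F$. Therefore
\[|S|\ge |F|+(\lfloor m/2\rfloor-1)=(m+2)+(\lfloor m/2\rfloor-1)=\lfloor 3m/2\rfloor+1,\]
completing the proof. The main obstacle is the bookkeeping in Step 2 and Step 3: one must verify carefully that every forced vertex is correctly identified, that for each transition the two in-neighbors $u_{i-1},v_i$ truly lie outside $F$, and that the extras demanded by distinct transitions are genuinely new vertices, none of which coincide with the boundary forcings $\{v_1,v_2,u_{m-1},u_m\}$ or with each other.
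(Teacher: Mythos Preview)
Your proof is correct and follows essentially the same approach as the paper: the orientation is identical, your forced set $F$ coincides with the paper's set of mandatory in-neighbors, and your transition pairs $\{u_{i-1},v_i\}$ are exactly the paper's extra pairs $\{u_{2k},v_{2k+1}\}$ under the reindexing $i=2k+1$. The only cosmetic difference is that you include $u_{m-1}$ directly in $F$ (as the unique in-neighbor of $u_m$), whereas the paper handles the odd-$m$ boundary via a separate case for dominating $v_{m-1}$.
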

\begin{proof}
Label the vertices of $G=K_2 \Box P_m$ as $\{u_1, \dots, u_m, v_1, \dots, v_m\}$ such that $\{u_1, \cdots , u_m\}$ induces a path, $\{v_1, \cdots , v_m\}$ induces a path, and $u_iv_i\in E(G)$ for $i \in [m]$.

Let $f$ be the following orienting mapping of $G$. Direct the outer cycle of the grid cyclically, i.e.\ $(u_i,u_{i+1}),(v_{i+1},v_i) \in A(G_f)$ for any $i \in [m-1]$, $(u_m,v_m),(v_1,u_1) \in A(G_f)$. For any $i \in \{2,\ldots , m-1\}$ direct the edge $u_iv_i \in E(G)$ as $(u_i,v_i)$ if $i$ is even and as $(v_i,u_i)$ if $i$ is odd. Let $S$ be $\gt$-set of $G_f$. Since for any $\ell$ $u_{2\ell}$ and $v_{2\ell -1}$ both have in-degree 1, its in-neighbors $u_{2\ell -1}, v_{2\ell}$ must be in $S$. Moreover, $v_1,u_m \in S$, since $u_1$ and $v_m$ also have in-degree 1. For any $k \in \{1,2,\ldots , \lfloor \frac{m-2}{2} \rfloor\}$ $v_{2k}$ has exactly two in-neighbors $u_{2k}, v_{2k+1}$. Since $N^+(u_{2k})=N^+(v_{2k+1})=\{v_{2k},u_{2k+1}\}$ exactly one vertex from $\{ u_{2k},v_{2k+1}\}$ is in $S$. If $m$ is odd, then also $v_{m-1}$ has exactly two out-neighbors $u_{m-1}$ and $v_m$. Since $N^+(v_m) \subset N^+(u_{m-1})$, again $S$ cannot contain both $v_m$ and $u_{m-1}$, but it must contain one of them. Thus $|S| \geq \lfloor \frac{3}{2}m \rfloor +1.$ 


\end{proof}

Finally, we give a lower bound on the upper orientable total domination number of $P_m \Box P_n$.

\begin{thm}\label{UpperBoundDOM}
If $3 \leq m \leq n$, then $\Dt(P_m \Box P_n) \geq  \frac{1}{2}mn  + \frac{1}{2}m$. 
\end{thm}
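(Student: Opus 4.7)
The plan is to construct an explicit valid orientation $f$ of $G=P_m\Box P_n$ and show directly that any total dominating set $S$ of $G_f$ must have $|S|\geq \frac{1}{2}m(n+1)$. The orientation I propose is a \emph{brick} orientation: horizontal edges in row $i$ are directed left-to-right if $i$ is odd and right-to-left if $i$ is even, while vertical edges in column $j$ are directed bottom-to-top if $j$ is odd and top-to-bottom if $j$ is even. Depending on the parities of $m$ and $n$, small local modifications near the corners (for instance, reversing row $m$ when $m$ is odd, or column $n$ when $n$ is odd) are required in order to ensure that every vertex has in-degree at least one.

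With $f$ fixed, I would analyze $S$ via two disjoint families of constraints. First, many boundary vertices of $G$ (on rows $1$ and $m$, or on columns $1$ and $n$) have in-degree exactly one under $f$, each forcing its unique in-neighbor into $S$. A direct parity check shows these forced in-neighbors comprise at least $m+n$ distinct vertices lying on the four sides of $G$. Second, among the unit $4$-cycles $u_{i,j}u_{i,j+1}u_{i+1,j+1}u_{i+1,j}$ indexed by $(i,j)\in [m-1]\times [n-1]$ with $i+j$ odd, the brick orientation produces \emph{source--sink} $4$-cycles: two vertices of such a cycle (the sinks) have in-degree $2$ within the cycle and share their pair of in-neighbors (the other two vertices, the sources). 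Dominating the two sinks in $G_f$ then forces $S$ to contain at least one source of each such cycle, and one checks that the resulting source pairs are pairwise disjoint and also disjoint from the boundary-forced set.

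Combining the two contributions yields
\[
|S| \;\geq\; (m+n) + \#\{\text{source--sink $4$-cycles in } G_f\},
\]
and for $m,n$ both even a short count gives $\#\{\text{source--sink } 4\text{-cycles}\} = \frac{mn}{2}-\frac{m+n}{2}$, so the right-hand side equals $\frac{mn}{2}+\frac{m+n}{2}$, which is at least $\frac{1}{2}m(n+1)$ since $n\geq 0$. The main obstacle will be the parity case analysis: the exact set of boundary-forced vertices, the precise number of source--sink $4$-cycles, and the corner modifications required for validity all depend on whether $m$ and $n$ are each even or odd, so all four parity cases must be verified separately. In each case, however, the estimate reduces to the elementary inequality $(m+n)/2 \geq m/2$.
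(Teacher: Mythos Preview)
Your approach is genuinely different from the paper's. The paper constructs an asymmetric orientation in which most of row $i$ (for odd $i$) consists of vertices that are the \emph{unique} in-neighbor of someone, so those vertices are forced into every TD-set; it then checks that this forced set already dominates and has size exactly $\tfrac12 mn+\tfrac12 m$. Your brick orientation instead produces, for interior vertices, in-degree exactly $2$, and you extract a lower bound by combining $(m+n)$ boundary-forced vertices with one vertex from each of $\tfrac{mn}{2}-\tfrac{m+n}{2}$ disjoint ``source pairs''. For $m,n$ both even this is correct: the orientation is valid as stated, the two sinks of each source--sink $4$-cycle really do have as their only in-neighbors in $G_f$ the two sources of that cell, each vertex lies in at most one source pair, and the boundary-forced vertices are disjoint from all source pairs. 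You even get the stronger bound $\tfrac{mn}{2}+\tfrac{m+n}{2}$.

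The gap is precisely the part you flag but do not carry out. For the other three parity cases the brick orientation is invalid at a corner, and your fix (reverse row $m$ when $m$ is odd, reverse column $n$ when $n$ is odd) does restore validity, but it destroys the source--sink structure of every cell in the strip $(m-1,\cdot)$ or $(\cdot,n-1)$: those cells become ``transitive'' rather than source--sink, so they contribute nothing to your count. You therefore lose on the order of $n/2$ or $m/2$ cells, and it is not automatic that the extra forced vertices you pick up along the modified row/column compensate. In fact they do (for instance, for $m$ odd and $n$ even one gets $m+n$ forced vertices and $\tfrac{mn-m-2n+1}{2}$ surviving source--sink cells, totalling $\tfrac{mn+m+1}{2}$), but each parity case needs its own recount of forced vertices, surviving cells, and the disjointness checks, and when $m,n$ are both odd the two reversals interact. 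Until those three cases are written out, the proof is a correct outline only for $m,n$ even.
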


\begin{proof}
For $P_m = v_1 v_2 \dots v_m$ and $P_n = w_1 w_2 \dots w_n$, let $\{u_{i,j}:1\leq i \leq m, 1 \leq j \leq n \}$ be the vertices of $G=P_m \Box P_n$. Consider the following orientation $f$ of $G$: \\
(i) $(u_{1,2}, u_{1,1})$ and $(u_{i,j}, u_{i,j+1})$ when $i$ and $j$ are not both equal to $1$,\\
(ii) $(u_{2,2}, u_{1,2})$ and $(u_{1,j},u_{2,j})$ for $j=1$ and $3 \leq j \leq n$,\\
(iii) $(u_{i+1, j}, u_{i, j})$ for each even $i \geq 2$ and $2 \leq j \leq n - 1$, \\
(iv) $(u_{i,j}, u_{i+1,j})$ for each odd $i \geq 3$ and $2 \leq j \leq n - 1$, \\
(v) $(u_{i,1}, u_{i+1,1})$ and $(u_{i,n},u_{i+1,n})$ for each $i \geq 2$.\\

This orientation is illustrated for $P_6 \Box P_8$ in Figure \ref{DomP_6P_8}. For even $m \geq 3$, let $S = \{ u_{i,j} : i \,\, \text{is odd}, 1 \leq j \leq n \} \cup \{u_{i,1} : i \,\, \text{is even}, i \neq m\} \cup \{u_{2,2}\}$.  We claim that $S$ is a minimum $\gt$-set of $G_f$.   Since $u_{1, 1}, u_{1, 2}, u_{2, 1}, u_{2, 2}$ are the unique in-neighbors of $u_{2, 1}, u_{1, 1}, u_{3, 1}$, and $u_{1, 2}$, respectively, they must be in any $\gamma_t$-set of $G_f$. Next, for $3 \leq j \leq n-1$, each $u_{1,j}$ is the unique in-neighbor of $u_{1,j+1}$ and must be in any $\gamma_t$-set of $G_f$. Similarly, for odd $i \geq 3$ and $2 \leq j \leq n-2$, each $u_{i,j}$ is the unique in-neighbor of $u_{i,j+1}$ and must be in any $\gamma_t$-set of $G_f$. Then, for $3 \leq i \leq m-1$, each $u_{i,1}$ is the unique in-neighbor of $u_{i+1,1}$ and must be in any $\gamma_t$-set of $G_f$.  Finally, for each $i\in \{2,\ldots , m\}$, either $u_{i,n-1}$ or $u_{i-1,n}$ must be in any $\gamma_t$-set of $G_f$ to dominate the vertex $u_{i,n}$.  We choose to include $u_{i-1,n} \in S$ for even $i$ and $u_{i, n - 1} \in S$ for odd $i$.  That each vertex of $G_f$ is dominated by $S$, can be verified using the orientation given above.  Since  $S$ is $\gamma_t$-set and $|S| = (m/2)(n) + (m-2)/2 + 1 = \frac{1}{2} mn + \frac{1}{2}m$, we have that $\Dt(P_m \Box P_n) \geq  \frac{1}{2}mn + \frac{1}{2}m$ for even $m \geq 3$.

When $m \geq 3$ is odd, we remove $u_{m,n}$ from $S$ to obtain a $\gamma_t$-set of size $|S| - 1 = \lceil m/2 \rceil (n) + \lceil (m-2)/2 \rceil = \frac{m + 1}{2} n + \frac{m - 1}{2} = \frac{1}{2} mn + \frac{1}{2}m + \frac{1}{2}n - \frac{1}{2} \geq \frac{1}{2} mn + \frac{1}{2} m$ since $n \geq 3$. This gives $\Dt(P_m \Box P_n) \geq  \frac{1}{2}mn + \frac{1}{2} m$ for odd $m \geq 3$.

\end{proof}

\begin{figure}
\begin{center}
	\begin{tikzpicture}[]
	\draw [->, line width=1pt] (2,6) -- (1.2, 6);
	\draw [->, line width=1pt] (2,6) -- (2.8, 6);
	\draw [->, line width=1pt] (3,6) -- (3.8, 6);
	\draw [->, line width=1pt] (4,6) -- (4.8, 6);
	\draw [->, line width=1pt] (5,6) -- (5.8, 6);
	\draw [->, line width=1pt] (6,6) -- (6.8, 6);
	\draw [->, line width=1pt] (7,6) -- (7.8, 6);
	
	\draw [->, line width=1pt] (1,5) -- (1.8, 5);
	\draw [->, line width=1pt] (2,5) -- (2.8, 5);
	\draw [->, line width=1pt] (3,5) -- (3.8, 5);
	\draw [->, line width=1pt] (4,5) -- (4.8, 5);
	\draw [->, line width=1pt] (5,5) -- (5.8, 5);
	\draw [->, line width=1pt] (6,5) -- (6.8, 5);
	\draw [->, line width=1pt] (7,5) -- (7.8, 5);	
	
	\draw [->, line width=1pt] (1,4) -- (1.8, 4);
	\draw [->, line width=1pt] (2,4) -- (2.8, 4);
	\draw [->, line width=1pt] (3,4) -- (3.8, 4);
	\draw [->, line width=1pt] (4,4) -- (4.8, 4);
	\draw [->, line width=1pt] (5,4) -- (5.8, 4);
	\draw [->, line width=1pt] (6,4) -- (6.8, 4);
	\draw [->, line width=1pt] (7,4) -- (7.8, 4);
	
	\draw [->, line width=1pt] (1,3) -- (1.8, 3);
	\draw [->, line width=1pt] (2,3) -- (2.8, 3);
	\draw [->, line width=1pt] (3,3) -- (3.8, 3);
	\draw [->, line width=1pt] (4,3) -- (4.8, 3);
	\draw [->, line width=1pt] (5,3) -- (5.8, 3);
	\draw [->, line width=1pt] (6,3) -- (6.8, 3);
	\draw [->, line width=1pt] (7,3) -- (7.8, 3);
	
	\draw [->, line width=1pt] (1,2) -- (1.8, 2);
	\draw [->, line width=1pt] (2,2) -- (2.8, 2);
	\draw [->, line width=1pt] (3,2) -- (3.8, 2);
	\draw [->, line width=1pt] (4,2) -- (4.8, 2);
	\draw [->, line width=1pt] (5,2) -- (5.8, 2);
	\draw [->, line width=1pt] (6,2) -- (6.8, 2);
	\draw [->, line width=1pt] (7,2) -- (7.8, 2);
	
	\draw [->, line width=1pt] (1,1) -- (1.8, 1);
	\draw [->, line width=1pt] (2,1) -- (2.8, 1);
	\draw [->, line width=1pt] (3,1) -- (3.8, 1);
	\draw [->, line width=1pt] (4,1) -- (4.8, 1);
	\draw [->, line width=1pt] (5,1) -- (5.8, 1);
	\draw [->, line width=1pt] (6,1) -- (6.8, 1);
	\draw [->, line width=1pt] (7,1) -- (7.8, 1);
	
	
	\draw [->, line width=1pt] (1,6) -- (1, 5.2);
	\draw [->, line width=1pt] (2,5) -- (2, 5.8);
	\draw [->, line width=1pt] (3,6) -- (3, 5.2);
	\draw [->, line width=1pt] (4,6) -- (4, 5.2);
	\draw [->, line width=1pt] (5,6) -- (5, 5.2);
	\draw [->, line width=1pt] (6,6) -- (6, 5.2);
	\draw [->, line width=1pt] (7,6) -- (7, 5.2);
	\draw [->, line width=1pt] (8,6) -- (8, 5.2);
	
	\draw [->, line width=1pt] (1,5) -- (1, 4.2);
	\draw [->, line width=1pt] (2,4) -- (2, 4.8);
	\draw [->, line width=1pt] (3,4) -- (3, 4.8);
	\draw [->, line width=1pt] (4,4) -- (4, 4.8);
	\draw [->, line width=1pt] (5,4) -- (5, 4.8);
	\draw [->, line width=1pt] (6,4) -- (6, 4.8);
	\draw [->, line width=1pt] (7,4) -- (7, 4.8);
	\draw [->, line width=1pt] (8,5) -- (8, 4.2);
	
	\draw [->, line width=1pt] (1,4) -- (1, 3.2);
	\draw [->, line width=1pt] (2,4) -- (2, 3.2);
	\draw [->, line width=1pt] (3,4) -- (3, 3.2);
	\draw [->, line width=1pt] (4,4) -- (4, 3.2);
	\draw [->, line width=1pt] (5,4) -- (5, 3.2);
	\draw [->, line width=1pt] (6,4) -- (6, 3.2);
	\draw [->, line width=1pt] (7,4) -- (7, 3.2);
	\draw [->, line width=1pt] (8,4) -- (8, 3.2);

	\draw [->, line width=1pt] (1,3) -- (1, 2.2);
	\draw [->, line width=1pt] (2,2) -- (2, 2.8);
	\draw [->, line width=1pt] (3,2) -- (3, 2.8);
	\draw [->, line width=1pt] (4,2) -- (4, 2.8);
	\draw [->, line width=1pt] (5,2) -- (5, 2.8);
	\draw [->, line width=1pt] (6,2) -- (6, 2.8);
	\draw [->, line width=1pt] (7,2) -- (7, 2.8);
	\draw [->, line width=1pt] (8,3) -- (8, 2.2);
	
	\draw [->, line width=1pt] (1,2) -- (1, 1.2);
	\draw [->, line width=1pt] (2,2) -- (2, 1.2);
	\draw [->, line width=1pt] (3,2) -- (3, 1.2);
	\draw [->, line width=1pt] (4,2) -- (4, 1.2);
	\draw [->, line width=1pt] (5,2) -- (5, 1.2);
	\draw [->, line width=1pt] (6,2) -- (6, 1.2);
	\draw [->, line width=1pt] (7,2) -- (7, 1.2);
	\draw [->, line width=1pt] (8,2) -- (8, 1.2);


	\draw [fill=white] (1, 1) circle (5pt);
	\draw [fill=white] (2, 1) circle (5pt);
	\draw [fill=white] (3, 1) circle (5pt);
	\draw [fill=white] (4, 1) circle (5pt);
	\draw [fill=white] (5, 1) circle (5pt);
	\draw [fill=white] (6, 1) circle (5pt);
	\draw [fill=white] (7, 1) circle (5pt);
	\draw [fill=white] (8, 1) circle (5pt);

	\draw [fill=gray] (1, 2) circle (5pt);
	\draw [fill=gray] (2, 2) circle (5pt);
	\draw [fill=gray] (3, 2) circle (5pt);
	\draw [fill=gray] (4, 2) circle (5pt);
	\draw [fill=gray] (5, 2) circle (5pt);	
	\draw [fill=gray] (6, 2) circle (5pt);
	\draw [fill=gray] (7, 2) circle (5pt);
	\draw [fill=gray] (8, 2) circle (5pt);
	
	\draw [fill=gray] (1, 3) circle (5pt);
	\draw [fill=white] (2, 3) circle (5pt);
	\draw [fill=white] (3, 3) circle (5pt);
	\draw [fill=white] (4, 3) circle (5pt);
	\draw [fill=white] (5, 3) circle (5pt);	
	\draw [fill=white] (6, 3) circle (5pt);
	\draw [fill=white] (7, 3) circle (5pt);
	\draw [fill=white] (8, 3) circle (5pt);
	
	\draw [fill=gray] (1, 4) circle (5pt);
	\draw [fill=gray] (2, 4) circle (5pt);
	\draw [fill=gray] (3, 4) circle (5pt);
	\draw [fill=gray] (4, 4) circle (5pt);
	\draw [fill=gray] (5, 4) circle (5pt);	
	\draw [fill=gray] (6, 4) circle (5pt);
	\draw [fill=gray] (7, 4) circle (5pt);
	\draw [fill=gray] (8, 4) circle (5pt);
	
	\draw [fill=gray] (1, 5) circle (5pt);
	\draw [fill=gray] (2, 5) circle (5pt);
	\draw [fill=white] (3, 5) circle (5pt);
	\draw [fill=white] (4, 5) circle (5pt);
	\draw [fill=white] (5, 5) circle (5pt);	
	\draw [fill=white] (6, 5) circle (5pt);
	\draw [fill=white] (7, 5) circle (5pt);
	\draw [fill=white] (8, 5) circle (5pt);
	
	\draw [fill=gray] (1, 6) circle (5pt);
	\draw [fill=gray] (2, 6) circle (5pt);
	\draw [fill=gray] (3, 6) circle (5pt);
	\draw [fill=gray] (4, 6) circle (5pt);
	\draw [fill=gray] (5, 6) circle (5pt);	
	\draw [fill=gray] (6, 6) circle (5pt);
	\draw [fill=gray] (7, 6) circle (5pt);
	\draw [fill=gray] (8, 6) circle (5pt);
	
	\draw (0.2,6) node{$u_{1,1}$}; 
	\draw (0.2,1) node{$u_{6,1}$}; 
	\draw (8.8,6) node{$u_{1,8}$}; 
	\draw (8.8,1) node{$u_{6,8}$}; 
	
	\end{tikzpicture} \\
	\caption{Orientation of $P_6 \Box P_8$ with  $\gt$-set $S$ indicated in gray.}\label{DomP_6P_8}
	
\end{center}
\end{figure}

Since cycles are the only graphs with both lower and upper orientable total domination numbers equal to their order, Vizing's conjecture does not hold for these two invariants. Indeed $\dt(C_3)=\Dt(C_3) = 3$ and $\dt(C_3 \Box C_3) \leq \Dt(C_3\Box C_3) < 9 = \dt(C_3)\dt(C_3) = \Dt(C_3)\Dt(C_3)$.

\section{Graphs with extreme orientable total domination number}\label{sec:extreme}

We already know that $\dt(G) = \Dt(G)=n(G)$ if and only if $G$ is a cycle.  In this section, we give a family of graphs $\mathcal{F}$ such that $\dt(G)=n(G)-1$ if and only if $G \in {\mathcal{F}} \cup \{K_4,K_{2,3},K_4-e\}$.

We first prove some necessary conditions for graphs with $\dt(G)=n(G)-1$. Note that this implies that for any valid orientation $f$, $\gt(G_f)=n(G)-1$. 
\begin{lem}\label{l:maxDegree}
If $G$ is a graph with $\dt(G)=n(G)-1$, then $\Delta(G) \leq 3$.
\end{lem}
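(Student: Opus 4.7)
The plan is to establish the contrapositive: if $\Delta(G) \ge 4$, then $\dt(G) \le n(G) - 2$, contradicting $\dt(G) = n(G) - 1$. Pick $v \in V(G)$ with $\deg_G(v) \ge 4$ and fix four distinct neighbors $u_1, u_2, u_3, u_4 \in N_G(v)$. The guiding idea is that orienting $(v, u_3)$ and $(v, u_4)$ lets the single vertex $v$ dominate both $u_3$ and $u_4$ simultaneously, freeing those two vertices to be excluded from a total dominating set and aiming for $S := V(G) \setminus \{u_3, u_4\}$ as the TD set of size $n(G) - 2$.

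Concretely, I would orient $(u_1, v)$ and $(u_2, v)$, as well as $(v, u_i)$ for every $i \ge 3$. After this step, each of $v, u_3, u_4, u_5, \ldots, u_{\deg(v)}$ already has an in-neighbor in $S$ (namely $u_1$ for $v$, and $v$ for the others, all of which lie in $S$). The remaining edges—those of $G - v$—must then be oriented so as to (i) produce a valid orientation of $G$ and (ii) give every vertex in $\{u_1, u_2\} \cup (V(G) \setminus N_G[v])$ an in-neighbor lying in $S$. If both can be achieved, then $S$ is a TD set of $G_f$ with $|S| = n(G) - 2$, which is the desired contradiction.

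The necessary local conditions are that each vertex to be dominated from $S$ has at least one neighbor in $S$ (for $u_1, u_2$: a neighbor outside $\{v, u_3, u_4\}$; for $z \notin N_G[v]$: a neighbor outside $\{u_3, u_4\}$) and that these in-arcs can be chosen consistently so that no edge of $G - v$ is asked to point both ways. By Observation~\ref{tdsetcycle}, one also needs every component of $G[S]$ to contain a cycle so that vertices of $S$ can be given in-neighbors within $S$.

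The main obstacle is verifying these conditions for some choice of $u_1, u_2, u_3, u_4$. I would handle this through case analysis. If $G$ has two leaves attached to distinct support vertices, then taking the excluded pair $\{x, y\}$ to be those two leaves satisfies the conditions immediately: leaves do not obstruct any other vertex's domination by $S$, and removing leaves leaves each component with its cycles intact. In the remaining situations ($G$ has at most one leaf, or all leaves share a common parent) the four-plus neighbors of $v$ furnish at least $\binom{4}{2} = 6$ candidate pairs $\{u_3, u_4\} \subseteq N(v)$, and I would argue that some pair avoids producing a vertex $z$ with $N_G(z) \subseteq \{u_3, u_4\}$. The hardest subcase I expect is when removing any two vertices of $G$ leaves a tree component, so that Observation~\ref{tdsetcycle} blocks the orientation of $G[S]$; here one may need to combine a neighbor and a non-neighbor of $v$ as the excluded pair, exploiting cycles of $G$ that avoid the chosen vertices.
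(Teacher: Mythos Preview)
Your outline has a real gap: the case analysis you promise is never carried out, and the situations you flag as ``hardest'' are genuinely problematic for your strategy. By committing in advance to the specific set $S = V(G)\setminus\{u_3,u_4\}$, you inherit two structural obligations---every vertex outside $N_G[v]$ must have a neighbour in $S$, and every component of $G[S]$ must contain a cycle---neither of which follows from $\deg_G(v)\ge 4$ alone. For instance, if for every pair $\{u_i,u_j\}\subseteq N_G(v)$ there is a degree-$2$ vertex $w_{ij}$ with $N_G(w_{ij})=\{u_i,u_j\}$ (which is easy to arrange), then no choice of excluded pair among the neighbours of $v$ works, and your fallback of ``combine a neighbour and a non-neighbour'' abandons the very feature (one vertex dominating both excluded vertices) that motivated the plan. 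The approach may be salvageable, but as written it is not a proof.

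The paper sidesteps all of this with a much simpler idea that you should adopt: do not fix $S$ in advance, and do not build the orientation from scratch. Start from \emph{any} valid orientation $G_f$, pick an in-neighbour $y$ of your high-degree vertex $x$, and redirect every other edge at $x$ outward to obtain a new orientation $G_h$. This is automatically valid, since in-degrees of vertices other than $x$ can only go up and $x$ still has $y$ as an in-neighbour. Now build a TD-set greedily: put $x$ in $S$, and for each vertex not dominated by $x$ add one of its in-neighbours (which exists by validity). Since $x$ has at least $\deg_G(x)-1\ge 3$ out-neighbours in $G_h$, at most $n(G)-3$ vertices remain undominated, so $|S|\le 1+(n(G)-3)=n(G)-2$. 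No case analysis, no cycle conditions on $G[S]$, no worry about conflicting edge directions.
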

\begin{proof}
Suppose that there is a vertex $x \in V(G)$ with deg$(x) \geq 4$. Let $f$ be a valid orienting mapping of $G$. Since $f$ is valid, there exists $y \in N_{G_f}^-(x)$. Let $G_h$ be an orientation obtained from $G_f$, such that all edges incident with $x$ in $G$, except the edge $xy$, are oriented out of $x$, but all other edges have the same direction as in $f$. Then $h$ is also a valid orientation of $G$ (note that $|N_{G_h}^-(v)| \geq |N_{G_f}^-(v)|$ holds for any $v \in V(G)\setminus \{x\}$). We can create a total dominating set $S$ in $G_h$ by adding $x$ to $S$ and choosing an in-neighbor for each of the remaining vertices not dominated by $x$. Since $x$ dominates at least three vertices of $G_h$, $\dt(G) \leq \gt(G_h) \leq n(G)-2$, a contradiction.
\end{proof}

\begin{lem}\label{l:degree3vertices}
If $G$ is a connected graph with $\dt(G)=n(G)-1$, then $G$ is isomorphic to $K_{2,3}$ or the vertices of degree 3 induce a complete graph. 
\end{lem}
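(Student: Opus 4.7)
The plan is to prove the contrapositive. Suppose $G$ is connected, $G \ne K_{2,3}$, and $G[V_3]$ is not complete, where $V_3$ denotes the set of vertices of degree $3$ (note $\Delta(G)\le 3$ by Lemma~\ref{l:maxDegree}). I will construct a valid orientation $f$ of $G$ together with a total dominating set $S$ of $G_f$ with $|S|\le n(G)-2$, contradicting $\dt(G)=n(G)-1$. Fix $u,v\in V_3$ with $uv\notin E(G)$, and write $N(u)=\{a_1,a_2,a_3\}$ and $N(v)=\{b_1,b_2,b_3\}$. Because $uv\notin E(G)$ we have $u\notin N(v)$ and $v\notin N(u)$, so if we can pick distinct $x\in N(u)$ and $y\in N(v)$ and take $S=V(G)\setminus\{x,y\}$, then both $u$ and $v$ will belong to $S$ and can serve as dominators of $x$ and $y$ respectively.

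I would split the argument on $k=|N(u)\cap N(v)|\in\{0,1,2,3\}$. When $k\le 2$, there exist $x\in N(u)\setminus N(v)$ and $y\in N(v)\setminus N(u)$ with $x\ne y$. Orient the three edges at $u$ as $(u,x)$, $(u,a)$, and $(a',u)$, for some $a,a'\in N(u)\setminus\{x\}$ with $a'\ne y$ (possible because $k\le 2$ and $\deg u=3$), and symmetrically for $v$, choosing the in-neighbor of $v$ in $N(v)\setminus\{y\}$ and not equal to $x$. This guarantees that $u,v$ both have an in-neighbor already in $S$. The remaining edges (those not incident to $u$ or $v$) are then oriented so that every remaining vertex acquires an in-neighbor in $S$; this can be done one component of $G-\{u,v\}$ at a time by rooting an arbitrary spanning structure and using that each vertex of such a component has neighbors in $G$ other than $u,v$ (or, for the leaves of $G$, the edge to their unique neighbor must be oriented toward the leaf, which is safe since the neighbor lies in $S$). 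The set $S=V(G)\setminus\{x,y\}$ is then a total dominating set of size $n(G)-2$.

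The case $k=3$ is the most delicate and drives the $K_{2,3}$ exception. Here $N(u)=N(v)=\{a,b,c\}$, so $\{u,v,a,b,c\}$ induces a copy of $K_{2,3}$. If $G=K_{2,3}$ we are done, so by connectedness some vertex of $\{a,b,c\}$, say $a$, has a neighbor $w\notin\{u,v,b,c\}$. I would orient the $K_{2,3}$ cyclically — $(u,a),(a,v),(v,b),(b,u),(u,c),(c,v)$ — direct $(a,w)$, and extend arbitrarily on the rest so as to keep validity, and then take $S$ to exclude $c$ together with one carefully chosen vertex reached from $w$ (if $\deg w=1$ then $w$ itself can be one of the excluded vertices, and if $\deg w\ge 2$ a neighbor of $w$ can be). Verification that $\{u,v,a,b\}\cup(V(G)\setminus(\{u,v,a,b,c\}\cup\{\text{second excluded vertex}\}))$ dominates everything proceeds directly. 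The main obstacle throughout is keeping the orientation \emph{valid} when $G$ contains vertices of degree $2$ adjacent to both $u$ and $v$, or leaves attached to $a_i$ or $b_j$: in each such sub-case one has to verify that the local constraints (edge of a leaf must point toward the leaf; a degree-$2$ common neighbor of $u$ and $v$ cannot have both incident edges oriented away from it) are compatible with the global recipe above, which is the routine but careful book-keeping portion of the proof.
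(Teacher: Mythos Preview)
Your case split on $k=|N(u)\cap N(v)|$ matches the paper's, but the construction you use inside each case is different and has a genuine gap.

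For $k\le 2$ you take $S=V(G)\setminus\{x,y\}$ with $x\in N(u)\setminus N(v)$ and $y\in N(v)\setminus N(u)$. This breaks whenever some vertex has its entire neighborhood inside $\{x,y\}$, and that is not mere book-keeping. Take $k=2$ with $N(u)=\{x,a,b\}$, $N(v)=\{y,a,b\}$, $\deg a=\deg b=2$, and attach leaves $\ell_x$ to $x$ and $\ell_y$ to $y$ (so $G$ is the $4$-cycle $u,a,v,b,u$ with two pendant paths of length $2$). Here $x$ and $y$ are the \emph{unique} elements of $N(u)\setminus N(v)$ and $N(v)\setminus N(u)$, so your recipe forces their omission; but $\ell_x$'s only neighbor is $x\notin S$, so no orientation can make $S$ a TD-set. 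For $k=3$, your connectedness claim that some vertex of $\{a,b,c\}$ has a neighbor outside $\{u,v,a,b,c\}$ is false when $V(G)=\{u,v,a,b,c\}$ and the extra edge lies among $\{a,b,c\}$ (e.g.\ $K_{2,3}+ab$); the paper handles this as a separate sub-case.

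The paper avoids all of this by a different device. Instead of building the orientation from scratch and deleting two neighbors of $u,v$ from $V(G)$, it starts from an \emph{arbitrary} valid orientation $f$ and re-orients only the edges incident with $u$ and $v$ so that, in the resulting orientation $h$, the pair $\{u,v\}$ (in one sub-case $\{u,z\}$ for a suitable $z$) has four distinct out-neighbors. Validity of $h$ is then immediate: vertices outside $N[u]\cup N[v]$ have all incident edges unchanged, and the handful of vertices in $N[u]\cup N[v]$ are checked directly from the specified arcs. Once $\{u,v\}$ dominates four vertices, put $u,v$ into $S$ and, for each of the remaining $n(G)-4$ vertices, add one in-neighbor in $G_h$ (which exists because $h$ is valid), giving $|S|\le n(G)-2$. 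Because validity is inherited from $f$ and only a bounded local patch is modified, the leaf and degree-$2$ obstructions you flagged never arise.
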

\begin{proof}
Note first that $\dt(K_{2,3})=4=n(K_{2,3})-1$ by Theorem~\ref{CompleteBipartiteSmallDom}. Let $G$ be a graph not isomorphic to $K_{2,3}$ with $\dt(G)=n(G)-1$ and let $f$ be an arbitrary valid orienting mapping of $G$. Then $\Delta(G)\leq 3$ by Lemma~\ref{l:maxDegree}. For the purpose of contradiction assume that $G$ contains two non-adjacent vertices $x,y$ of degree 3. 

First let $N(x) \cap N(y)= \emptyset$. Since $f$ is valid, $x$ has at least one in-neighbor $x_1$ and $y$ has an in-neighbor $y_1$ in $G_f$. Then an orientation $h$ of $G$ obtained from $f$  by changing (if necessary) the direction of edges incident  with $x$ and $y$ such that $x_1$ will be the only in-neighbor of $x$ and $y_1$ will be the only in-neighbor of $y$ in $G_h$ is also a valid orientation, as the in-degree of any vertex from $V(G) \setminus \{x,y\}$ in $G_h$ is at least as large as in $G_f$. We can create a total dominating set $S$ of $G_h$ by including $x$ and $y$ in $S$ and then choosing one in-neighbor for each vertex not dominated by $x$ or $y$. Since $\{x, y\}$ dominates four vertices in $G_h$, $\gt(G_h) \leq n(G)-2$, which is a contradiction.

Now let $\{z\}=N(x) \cap N(y)$. If $z$ is an in-neighbor of at least one vertex $x$ or $y$, say $x$, then denote by $y'$ an arbitrary in-neighbor of $y$ in $G_f$ (note that $y'=z$ is also possible). Let
$h$ be an orienting mapping obtained from $f$ by changing (if necessary) the direction of edges incident  with $x$ and $y$ such that $z$ will be the only in-neighbor of $x$ and $y'$ will be the only in-neighbor of $y$ in $G_h$. Then $h$ is also a valid orienting mapping. Since $\{x, y\}$ dominates four vertices in $G_h$, $\gt(G_h) \leq n(G)-2$, a contradiction. If $z$ is an out-neighbor of both $x$ and $y$ in $G_f$, then we can change the direction of the edge $(x,z)$ to $(z,x)$ and leave all other edges directed as in $f$ and obtain another valid orientation of $G$, which leads us to the previous case, where we had an orientation in which $z$ is an in-neighbor of at least one vertex $x$ or $y$.

Now let $\{z_1,z_2\}=N(x) \cap N(y)$. Let $x_1 \in N(x) \setminus N(y)$ and $y_1 \in N(y) \setminus N(x)$. Let $h$ be the orienting mapping of $G$ obtained from $f$ by changing (if necessary) the direction of the edges incident with $x$ and $y$ such that $(x,x_1),(x,z_1),(y,y_1), (y,z_2),(z_1,y),$ $(z_2,x) \in A(G_h)$. Since all vertices $x,y,x_1,y_1,z_1,z_2$ have in-degree at least 1 in $G_h$, and since we did not change the direction of edges incident with other vertices of $G$, $G_h$ is a valid orientation of $G$. Since $\{x, y\}$ dominates four vertices in $G_h$, $\gt(G_h) \leq n(G)-2$, a contradiction.

Thus $N(x)=N(y)=\{x_1,x_2,x_3\}$. Since $G$ is not isomorphic to $K_{2,3}$ and is connected, there is either an edge between $x_i$ and $x_j$, for $i\neq j $ or some $x_i$ has a neighbor $z \not\in\{ x,y,x_j,x_k\}$, where $\{x_i,x_j,x_k\}=\{x_1,x_2,x_3\}$. If the latter holds, we may assume that $x_1$ has a neighbor $z \not\in\{ x,y,x_2,x_3\}$. Let $h$ be the orienting mapping of $G$ obtained from $f$ by forcing the following directions: $(x,x_1),(x,x_2),(y,x_2), (y,x_3), (x_1,y),(x_3,x), (x_1,z) \in A(G_h)$. Since all vertices in $\{x,y,x_1,x_2,x_3,z\}$ have in-degree at least 1 in $G_h$, and since we did not change the direction of edges incident with other vertices of $G$, $G_h$ is a valid orientation of $G$. Since $\{x, x_1\}$ dominates four vertices in $G_h$, $\gt(G_h) \leq n(G)-2$, a contradiction. Finally assume that without loss of generality $x_1x_2\in E(G)$. Furthermore we may without loss of generality assume that $(x_1,x_2) \in A(G_f)$. Let $h$ be the orienting mapping of $G$ obtained from $f$ by forcing the following directions: $(x,x_1),(x_2,x),(y,x_1), (y,x_3), (x_2,y),(x_1,x_2), (x,x_3) \in A(G_h)$. Since all vertices in $\{x,y,x_1,x_2,x_3\}$ have in-degree at least 1 in $G_h$, and since we did not change the direction of edges incident with other vertices of $G$, $G_h$ is a valid orientation of $G$. Since $\{x, x_2\}$ dominates four vertices in $G_h$, $\gt(G_h) \leq n(G)-2$, the final contradiction. 
 
\end{proof}

\begin{lem}\label{l:1degree3}
If $G$ is a connected graph with $\dt(G)=n(G)-1$, then $G$ is isomorphic to $K_{2,3},K_4,K_4-e$ or $G$ has at most one vertex of degree 3. 
\end{lem}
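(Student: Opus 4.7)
The plan is to apply Lemmas~\ref{l:maxDegree} and~\ref{l:degree3vertices} to reduce the problem to a short case analysis on the size $k$ of the clique $K$ of degree-$3$ vertices of $G$ (under the assumption $G\not\cong K_{2,3}$). Since $\Delta(G)\le 3$ we have $|K|\le 4$. The cases $|K|\le 1$ give the conclusion directly, so the work is in $|K|\in\{2,3,4\}$. In each of those cases the goal is either to identify $G$ as $K_4$ or $K_4-e$, or to exhibit a valid orientation of $G$ carrying a total dominating set of size at most $n(G)-2$, contradicting $\dt(G)=n(G)-1$.

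I would dispatch the larger-clique cases first. For $|K|=4$, each of the four clique vertices uses all three of its edges inside $K$, so connectivity forces $G=K_4$. For $|K|=3$, label $K=\{a,b,c\}$ and let $a',b',c'$ be the respective external neighbors. These cannot all coincide, since a common external vertex would have degree at least $3$ and would itself belong to $K$. I would orient the triangle $abc$ cyclically as $(a,b),(b,c),(c,a)$, direct $(a,a'),(b,b'),(c,c')$ outward, and extend the remaining edges arbitrarily to a valid orientation. The set $\{a,b,c\}$ then dominates $\{a,b,c,a',b',c'\}$, a set of at least five vertices, and padding with one in-neighbor for each undominated vertex yields a TD-set of size at most $3+(n-5)=n-2$.

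The main case is $|K|=2$. Write $K=\{x,y\}$, $N(x)\setminus\{y\}=\{x_1,x_2\}$, and $N(y)\setminus\{x\}=\{y_1,y_2\}$. If $\{x_1,x_2\}=\{y_1,y_2\}$ as sets, each common neighbor is adjacent to both $x$ and $y$, forcing its degree to equal $2$ and preventing any further neighbors; connectivity then yields $G=K_4-e$. Otherwise $|N(x)\cup N(y)|\ge 5$, and I would split further. If $x,y$ share exactly one common neighbor $w$, orienting the triangle $xyw$ cyclically and pushing $(x,x_2),(y,y_2)$ outward gives $\{x,y,w\}$ dominating five vertices, and the TD-set completion again has size at most $3+(n-5)=n-2$. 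If $x,y$ share no common neighbor, I would use $G\in\mathcal{C}$ to locate one of the following structures: a triangle at $x$ (edge $x_1x_2$), a triangle at $y$ (edge $y_1y_2$), a $4$-cycle $xx_iy_jy$ (edge $x_iy_j$), or a longer cycle through $x$ closing via external vertices. In each subcase, orienting that cycle cyclically together with the remaining edges at $x,y$ pointed outward yields a TD-set (the cycle itself, possibly together with $x$ or $y$) dominating at least two vertices beyond the cycle, which pads out to size at most $n-2$.

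The main obstacle will be the final sub-subcase, where $x,y$ share no common neighbor and none of the short-cycle edges $x_1x_2$, $y_1y_2$, or $x_iy_j$ is present: here I must argue from $G\in\mathcal{C}$ and the max-degree-$2$ structure of the external neighbors of $x_1,x_2,y_1,y_2$ that some cycle of $G$ necessarily passes through $x$ (or through $y$) via a chain of external vertices, and then verify that orienting this possibly long cycle cyclically — together with pointing the leftover edges at $x,y$ outward — gives a valid orientation whose TD-set has size at most $n-2$ regardless of how many external degree-$2$ vertices the cycle traverses.
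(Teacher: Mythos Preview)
Your case analysis on $|K|$ matches the paper's, and your arguments for $|K|\in\{4,3\}$ and the shared-neighbour subcases of $|K|=2$ are essentially right. Where you diverge is the underlying technique: you build each orientation from scratch, whereas the paper fixes an arbitrary valid orienting mapping $f$ and only reorients the edges incident to the degree-$3$ vertices. Because every untouched edge inherits its direction from $f$, validity elsewhere is automatic, and the ``dominating set'' the paper exhibits has size only $2$ rather than $3$: in each case it shows that a pair $\{x,y\}$ (or $\{x,z\}$) already dominates four vertices, which suffices for $\gt\le 2+(n-4)=n-2$.

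This pays off most in what you flag as your ``main obstacle'', the no-common-neighbour subcase of $|K|=2$. The paper handles it in one move: given valid $f$ with (after swapping $x,y$ if necessary) $(x,y)\in A(G_f)$, pick $x_1\in N^-_{G_f}(x)$; form $h$ by keeping $(x_1,x)$ and $(x,y)$, directing all remaining edges at $x$ and $y$ outward, and leaving every other edge of $G$ as in $f$. Then $h$ is valid (only $x,y,x_1,x_2,y_1,y_2$ are affected, and each visibly has an in-neighbour), and $\{x,y\}$ dominates the four distinct vertices $y,x_2,y_1,y_2$. No cycle-finding, no subcases on $x_1x_2$, $y_1y_2$, $x_iy_j$, and no ``longer cycle'' analysis are needed. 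The common-neighbour subcase is handled the same way with the triangle $x,y,x_1$ oriented cyclically.

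Your route can be completed --- every cycle of $G$ does pass through $x$ or $y$, since $G-\{x,y\}$ is a disjoint union of paths --- but the case analysis you anticipate is entirely avoidable. Separately, each time you write ``extend the remaining edges arbitrarily to a valid orientation'' you owe an argument: it is not automatic that a partial orientation extends validly. It does hold here (each component of the un-oriented subgraph contains one of the already-dominated attachment vertices, which can serve as a root), but you should say so. Adopting the paper's ``start from a valid $f$ and modify locally'' device removes this obligation altogether.
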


\begin{proof}
Let $G$ be a graph not isomorphic to $K_{2,3},K_4$, or $K_4-e$ and let $f$ be a valid orienting mapping of $G$.
By Lemma~\ref{l:degree3vertices} the vertices of degree 3 induce a complete graph. Since by Lemma~\ref{l:maxDegree} $G$ has no vertices of degree more than 3, $G$ contains at most four degree 3 vertices. If $G$ contains four degree 3 vertices, then they induce $K_4$ and as $G$ is connected with $\Delta(G)=3$, $G$ is isomorphic to $K_4$, a contradiction. 

Suppose now that $G$ has three degree 3 vertices $x,y$, and $z$. By Lemma~\ref{l:degree3vertices} they induce a $K_3$. Let $x_1,y_1,z_1$ (all three different from $x,y,z$) be neighbors of $x,y$ and $z$, respectively. Since $G$ has only three degree 3 vertices, $x_1=y_1=z_1$ is not possible. Without loss of generality we may assume that $x_1 \neq z_1$. Let $G_h$ be an orientation of $G$ with $(x,y),(y,z),(z,x),(x,x_1),(z,z_1)\in A(G_h)$ and all other edges of $G$ have the same direction in $G_h$ as in $G_f$. Since all vertices of $G_h$ have in-degree at least 1, $h$ is valid. Since vertices $\{x, z\}$ dominates four vertices in $G_h$, $\gt(G_h) \leq n(G)-2$, a contradiction. 

Finally assume that $G$ has exactly two vertices, say  $x$ and $y$, of degree 3. Since $G$ is not isomorphic to $K_4-e$, $N(x)\setminus \{y\} \neq N(y) \setminus \{x\}$. Without loss of generality let $(x,y) \in A(G_f)$. If $x$ and $y$ have a common neighbor in $G$, then let $x_1$ be their common neighbor, otherwise let $x_1$ be an in-neighbor of $x$ in $G_f$.
If $x_1$ is a common neighbor of $x$ and $y$, then let $h$ be an orienting mapping of $G$ such that $(x,y),(x_1,x),(y,x_1) \in A(G_h)$ and  all other edges incident to $x$ or $y$  are directed out of $x$ or $y$.  Moreover,  let all other edges of $G$ have the same direction in $G_h$ as in $G_f$. If $x$ and $y$ have no common neighbors, then let $h$ be an orienting mapping with $(x_1,x),(x,y) \in A(G_h)$ and all other edges incident to $x$ or $y$  are directed out of $x$ or $y$.  Moreover,  let all other edges of $G$ have the same direction in $G_h$ as in  $G_f$. Then $h$ is valid and since $\{x, y\}$ dominates four vertices in $G_h$, $\gt(G_h) \leq n(G)-2$, a contradiction.
Thus the only possibility is that $G$ has at most one vertex of degree 3.
\end{proof}

Before we state the main result of this section we define a family of graphs $\mathcal{F}$. A graph $G$ is in $\mathcal{F}$ if and only if $G$ is obtained from a cycle $C_k$, $k \geq 3$, and path $P_{\ell}$, $\ell \geq 2$, by identifying one of the leaves of the path with an arbitrary vertex of the cycle.

\begin{thm}\label{th:n-1}
Let $G\in \mathcal{C}$ be a connected graph. Then $\dt(G)=n(G)-1$ if and only if $G \in {\mathcal{F}} \cup \{K_4,K_{2,3},K_4-e\}$.
\end{thm}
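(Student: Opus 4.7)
My plan is to split the proof into the two implications, with most of the structural work for the necessity direction already accomplished by Lemmas~\ref{l:maxDegree}, \ref{l:degree3vertices}, and \ref{l:1degree3}.

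For the sufficiency direction, I would verify each graph in $\mathcal{F}\cup\{K_4,K_{2,3},K_4-e\}$ individually. For $K_{2,3}$, Theorem~\ref{CompleteBipartiteSmallDom} gives $\dt(K_{2,3})=4=n(K_{2,3})-1$. For $K_4$ and $K_4-e$, Observation~\ref{DOMgirth} gives $\dt(G)\ge g(G)=3$, and exhibiting one explicit valid orientation admitting a total dominating set of size $3$ (orient a triangle cyclically and point the remaining edges away from a single vertex of that triangle) gives $\dt(G)\le 3$. For $G\in\mathcal{F}$, the key observation is that $G$ is unicyclic with exactly one leaf (the free end of the attached path), so Proposition~\ref{unicyclic} immediately yields $\dt(G)=n(G)-|L(G)|=n(G)-1$.

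For the necessity direction, suppose $G\in\mathcal{C}$ is connected with $\dt(G)=n(G)-1$. Lemma~\ref{l:1degree3} gives that either $G\in\{K_4,K_{2,3},K_4-e\}$ (and we are done), or $G$ has at most one vertex of degree $3$; Lemma~\ref{l:maxDegree} gives $\Delta(G)\le 3$ in the latter case. If $G$ has no vertex of degree $3$, then $G$ has maximum degree $2$ and is connected with a cycle, so $G$ is a cycle, giving $\dt(G)=n(G)$ by the characterization of \cite{HLY-99,Lisa}, a contradiction. Hence $G$ has exactly one vertex $v$ of degree $3$.

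The final step is a structural analysis of $G-v$. Since every vertex of $G-v$ has degree at most $2$, $G-v$ is a disjoint union of paths and cycles. Any neighbor of $v$ in $G$ has degree at most $2$ in $G$ and so degree at most $1$ in $G-v$, which means the three neighbors of $v$ are endpoints of path components of $G-v$. In particular, no component of $G-v$ is a cycle, since (by connectivity of $G$) such a component would have to be adjacent to $v$, which is impossible. A brief case split on the number of path components of $G-v$ together with the constraint that $v$ has exactly three endpoint neighbors and the fact that $G\in\mathcal{C}$ contains a cycle rules out one component (a path has only two endpoints) and three components (which would force $G$ to be a spider, hence a tree), leaving exactly two path components: one with both endpoints adjacent to $v$ (forming the cycle $C_k$ through $v$) and one with a single endpoint adjacent to $v$ (the pendant path $P_\ell$). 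This places $G$ in $\mathcal{F}$. The main obstacle is just ensuring the case split on the components of $G-v$ is exhaustive and that each ruled-out configuration is eliminated by the correct combination of the degree, endpoint, and cycle constraints.
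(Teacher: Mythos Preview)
Your proposal is correct. Sufficiency is handled essentially as in the paper (with welcome extra detail for $K_4$ and $K_4-e$, where the paper simply says ``clearly''), and necessity uses the same lemma stack (Lemmas~\ref{l:maxDegree}--\ref{l:1degree3}) to reduce to the case of a unique degree-$3$ vertex with $\Delta(G)\le 3$.

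The only real difference is the final structural step. The paper takes a longest cycle $C$, notes it is chordless (there is only one degree-$3$ vertex), shows $H=G-(C-x)$ is a forest so that $G$ is unicyclic, and then re-invokes Proposition~\ref{unicyclic} to conclude $|L(G)|=1$ from $\dt(G)=n(G)-1$, forcing $H$ to be a path. You instead delete the degree-$3$ vertex $v$ and analyze the path components of $G-v$, using an endpoint count to force exactly two components---one closing into a cycle through $v$ and one hanging off as a pendant path. Your route is a little more self-contained (no second appeal to Proposition~\ref{unicyclic}); the paper's route is shorter once one is willing to reuse the unicyclic formula. One minor wording issue: your description of the orientation for $K_4$ and $K_4-e$ (``point the remaining edges away from a single vertex of that triangle'') is imprecise, since not all remaining edges are incident with any one triangle vertex; what you actually need is just to orient the non-triangle edges toward the fourth vertex so that it acquires an in-neighbor, which makes the triangle a TD-set of size~$3$.
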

\begin{proof}
If $G \in \{K_4,K_4-e,K_{2,3}\}$, then clearly $\dt(G)=n(G)-1$. Let $G \in \mathcal{F}$. Since $G$ is unicyclic, it follows from Proposition~\ref{unicyclic} that $\dt(G)=n(G)-L(G)=n(G)-1$.

For the converse let $G\in \mathcal{C}$ be a connected graph with $\dt(G)=n(G)-1$ different from $K_4,K_4-e$, and $K_{2,3}$. We first note, that $G$ is not a cycle, as cycles have lower orientable total domination number equal to their order. By Observation~\ref{tdsetcycle} $G$ contains a cycle. Let $C$ be a longest cycle in $G$. Since by Lemma~\ref{l:1degree3} $G$ has at most one vertex of degree 3, $C$ has no chords. As $G$ is connected and not isomorphic to a cycle, $C$ has a vertex $x$ of degree 3, which is the only vertex of degree 3 in $G$. If $H=G-(C-x)$ has a cycle, then either $G$ has a vertex of degree 4 or $G$ has a vertex of degree 3 different from $x$, which contradicts Lemma~\ref{l:maxDegree} or Lemma~\ref{l:1degree3}. Moreover, no vertex from $V(H)\setminus \{x\}$ has a neighbor on $C$, by Lemma~\ref{l:1degree3}.  Hence $H$ is a forest and $G$ is a unicyclic graph. Since $\dt(G)=n(G)-1$ it follows from Proposition~\ref{unicyclic} that $G$ has exactly one leaf. Thus $H$ is a path with end-vertex $x$ and therefore $G \in \mathcal{F}$.

\end{proof}

We end this section by posing the following problem. Note that if $\dt(G) = n(G) - 1$, then $\dt(G) = n(G)-1 = \Dt(G)$. Therefore, Theorem~\ref{th:n-1} is a partial characterization of the graphs with $\Dt(G) = n(G)-1$. However, we believe there exist graphs where $\dt(G) < n(G)-1 = \Dt(G)$. 

\begin{prob} Characterize all graphs where $\Dt(G) = n(G)-1$.
\end{prob}

\section*{Acknowledgments}

This work was performed within the bilateral grant ``Domination in graphs, digraphs and their products", Slovenia (BI-US/22-24-038).
T.D. was supported by the Slovenian Research and Innovation agency (grants P1-0297, J1-2452).


\end{document}